 \theoremstyle{plain}
 \newtheorem{thm}{Theorem}[section]
 \newtheorem{prop}[thm]{Proposition}
 \newtheorem{claim}[thm]{Claim}
 \newtheorem{cor}[thm]{Corollary}
 \newtheorem{lem}[thm]{Lemma}
 \newtheorem{obs}[thm]{Observation}
 \newtheorem{conj}[thm]{Conjecture}
 \newtheorem*{thm*}{Theorem}
 \newtheorem*{cor*}{Corollary}
 \theoremstyle{definition}
 \newtheorem{rem}[thm]{Remark}
 \newtheorem{cond}[thm]{Condition}
 \newenvironment{nbiz}{\par\noindent{\itshape Proof:}\ }{}
\title{The Lovász-Cherkassky theorem in countable graphs}
\author{Attila Jo\'{o}}
\thanks{The author would like to thank the generous support of the Alexander 
von Humboldt Foundation and NKFIH 
OTKA-129211}
\address{Attila Jo\'{o},
Department of Mathematics, University of Hamburg, Bundesstra{\ss}e 55 (Geomatikum), 20146 Hamburg, Germany}
\email{attila.joo@uni-hamburg.de}
\address{Attila Jo\'{o},
Logic, Set theory and topology department, Alfr\'{e}d R\'{e}nyi Institute of Mathematics,  13-15 Re\'{a}ltanoda St., 
Budapest, Hungary}
\email{jooattila@renyi.hu}
\keywords{Lovász-Cherkassky theorem, T-path,  packing paths}
\subjclass[2020]{Primary: 05C63, 05C38. Secondary: 05C40, 05C45} 
\begin{document}

\begin{abstract}
Lovász and Cherkassky discovered in the 1970s independently that if $ G $ is a finite graph with a given set $ T $ of terminal 
vertices such that $ G $ is inner Eulerian, then the maximal number of edge-disjoint paths  connecting distinct vertices in $ T 
$  is $ \sum_{t\in T}\lambda(t, T-t)  $ where $\lambda $ is the local edge-connectivity function. The optimality of a system of 
edge-disjoint $ T $-paths in the Lovász-Cherkassky theorem is witnessed by the existence of certain cuts  by Menger's 
theorem. The infinite generalisation of Menger's theorem by Aharoni and Berger (earlier known as the Erdős-Menger 
Conjecture) together with the characterization of infinite Eulerian graphs due to Nash-Williams makes it possible  to 
generalise the theorem for infinite graphs in a structural way. The aim of this paper is to formulate this generalisation  and 
prove it for countable graphs.
\end{abstract}

\maketitle

\section{Introduction}
There are several deep results and conjectures in infinite combinatorics whose restriction to finite structures is a well-known 
classical theorem.  For example \cite{aharoni1983duality, aharoni1984konig} by Aharoni is known as Hall's 
and König's theorem when only finite graphs are considered and it is based on  the results \cite{aharoni1983general, 
aharoni1984another, aharoni1984marriage} by Aharoni, Nash-Williams and Shelah. The finite case of the Aharoni-Berger 
theorem \cite{aharoni2009menger}  (earlier known as the Erdős-Menger Conjecture)  is known as Menger's 
theorem and the Matroid Intersection Conjecture \cite{aharoni1998intersection} by Nash-Williams (which is only settled in the 
countable case \cite{joo2020MIC})  extends the Matroid Intersection Theorem 
\cite{edmonds2003submodular} of Edmonds. 

There are several common aspects of the
problems above. For example, assuming the finiteness of the involved structures simplifies the proof significantly. Indeed, the 
deletion of a cleverly chosen edge  gives rise to an inductive argument as well as the 
application of an ``augmenting path''. In contrast to the finite case, the deletion of a single element of an infinite set does not 
decrease its size, furthermore,  an infinite sequence of iterative augmentations may fail to give a well-defined
``limit object''. Another similarity between these statements is that they express a certain   
``complementary slackness'' condition  between suitable primal and dual objects: 
a matching $ M $ in $ G=(A,B,E) $ and a vertex-cover $ C $ consisting of a single vertex from each $ e\in M $;  a disjoint 
path-system $ \mathcal{P} 
$ between $ A $ and $ B $ in $ G=(V,E) $ with $ A,B\subseteq V $ and an $ AB $-separation $ S\subseteq V $ consisting of 
 a choice of a
single vertex from 
each $ P\in \mathcal{P} $;  a common 
independent  set $ 
I $ of matroids $ M_0 $ and $ M_1 $ and a bipartition $ E=E_0\sqcup E_1 $ of their common edge set such that $ E_i\cap I $ 
spans $ 
E_i $ in $ M_i $ for $ i\in \{ 0,1 \} $. Alternative characterizations of ``primal optimality'' can be given through the concept of 
strong maximality. Let 
us call an element $ X$ of  a set family $ \mathcal{X} $ strongly maximal in $ \mathcal{X} $ if $ \left|Y\setminus X\right| 
\leq\left|X\setminus 
Y\right| $ for every $ Y\in \mathcal{X} $. Note that if $ \mathcal{X} $ has only finite elements, then `strongly maximal' means 
 `maximum size', however, in general having maximum size is a much weaker property than strong maximality. It is 
 known  in the three problems we mentioned  that  the strong 
 maximality of a 
 matching/disjoint path 
system/common independent set is equivalent with the existence of a vertex-cover/separation/bipartition  such 
that the corresponding complementary slackness conditions are satisfied.

The starting point of our investigation is the following result obtained by Lovász and 
Cherkassky independently in the 1970s:

\begin{thm}[Lovász-Cherkassky theorem, \cite{lovasz1976some, cherkasskiy1977}]\label{t:LCh}
Let $ G $ be a finite graph and let $ T\subseteq V(G) $ such that $ G $ is inner Eulerian (i.e. $ d_G(v) $ is even for every $ v\in 
V(G)\setminus T $).  Then the 
maximal number of pairwise edge-disjoint $ T $-paths\footnote{A $ T $-path is a path connecting distinct vertices in $ T $ 
without having 
internal vertex in $ T $.}  is

\[ \frac{1}{2}\sum_{t\in T}\lambda_G(t, T-t),  \]

where $ \lambda_G(t, T-t) $ stands for the maximal number of pairwise edge-disjoint paths between $ t $ and $ T-t $.
\end{thm}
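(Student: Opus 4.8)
I would prove the two inequalities separately; the upper bound is routine and the lower bound is where the work lies. \textbf{Upper bound.} Let $\mathcal{P}$ be a family of pairwise edge-disjoint $T$-paths and, for $t\in T$, let $k_t$ be the number of its members having $t$ as an endpoint. Since a $T$-path has no internal vertex in $T$, these $k_t$ paths form a system of pairwise edge-disjoint paths between $t$ and $T-t$, so $k_t\le\lambda_G(t,T-t)$ by Menger's theorem; as every member of $\mathcal{P}$ has exactly two endpoints in $T$ we get $2\lvert\mathcal{P}\rvert=\sum_{t\in T}k_t\le\sum_{t\in T}\lambda_G(t,T-t)$. (Incidentally this forces the right-hand side to be an integer: any minimum cut $X$ between $t$ and $T-t$ has $d_G(X)\equiv\sum_{v\in X}d_G(v)\equiv d_G(t)\pmod 2$ because $X\cap T=\{t\}$ and $G$ is inner Eulerian, whence $\sum_{t}\lambda_G(t,T-t)\equiv\sum_t d_G(t)\equiv 0\pmod 2$.)

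\textbf{Lower bound: reduction to a tight instance.} For each $t\in T$ fix a minimum cut $X_t$ between $t$ and $T-t$, so $t\in X_t$, $X_t\cap T=\{t\}$ and $d_G(X_t)=\lambda_G(t,T-t)$. By the standard uncrossing argument — if $X_t$ and $X_s$ cross, replace them by $X_t\setminus X_s$ and $X_s\setminus X_t$, which are still minimum cuts for $t$ and $s$ as neither contains a foreign terminal — we may take the $X_t$ pairwise disjoint. Inside $X_t$ the boundary $\partial X_t$ is still a minimum cut separating $t$ from $X_t\setminus\{t\}$, so Menger's theorem supplies $\lambda_G(t,T-t)$ pairwise edge-disjoint paths inside $X_t$ from $t$ to $\partial X_t$ that together use every edge of $\partial X_t$ exactly once. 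Now contract each $X_t$ to a single terminal $\bar t$ and delete the loops created; the result $G^{*}$ is an inner Eulerian multigraph with terminal set $T^{*}$ in which $d_{G^{*}}(\bar t)=d_G(X_t)=\lambda_G(t,T-t)$ and, by the choice of $X_t$, also $\lambda_{G^{*}}(\bar t,T^{*}-\bar t)=\lambda_G(t,T-t)$. Call a multigraph \emph{tight} if at every terminal the degree equals the local edge-connectivity to the other terminals. Since a maximum edge-disjoint family of $T^{*}$-paths in $G^{*}$ is forced, by counting endpoints, to use each of the $d_{G^{*}}(\bar t)$ edges at $\bar t$ exactly once as an end-edge, such a family of size $\tfrac12\sum_{\bar t}d_{G^{*}}(\bar t)$ concatenates, along the paths found inside the $X_t$, into an edge-disjoint family of $T$-paths in $G$ of the same size. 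So it suffices to prove the statement for a tight inner Eulerian multigraph $H$ with terminal set $S$: to find $\tfrac12\sum_{s\in S}d_H(s)$ edge-disjoint $S$-paths.

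\textbf{Lower bound: the tight case, and the main obstacle.} I would induct on $\lvert S\rvert$; for $\lvert S\rvert\le 2$ this is Menger's theorem (tightness makes a lone terminal isolated). For $\lvert S\rvert\ge 3$ pick $a\in S$ and take a family $\mathcal{R}$ of $\lambda_H(a,S-a)=d_H(a)$ pairwise edge-disjoint paths from $a$ to $S-a$ of minimum total length; such paths are automatically internally disjoint from $S$, hence are $S$-paths, and they use all $d_H(a)$ edges at $a$. In $H':=H-E(\mathcal{R})$ the vertex $a$ becomes isolated, every non-terminal still has even degree (a path meets it in $0$ or $2$ of its edges), so with $S':=S-a$ we have an inner Eulerian multigraph, and $d_{H'}(b)=d_H(b)-p_b$ where $p_b$ counts the members of $\mathcal{R}$ ending at $b$. \textbf{The crux} is that $H'$ is again tight, i.e.\ $\lambda_{H'}(b,S'-b)=d_{H'}(b)$ for every $b\in S'$; this is precisely where the Eulerian hypothesis must be exploited, and where I expect essentially all the difficulty to sit. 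A careless $\mathcal{R}$ does destroy tightness — already on a triangle, routing one path the long way round leaves too little connectivity and loses a path — so one must use that $\mathcal{R}$ has minimum total length: an uncrossing argument on the cut function should then show that every minimum cut of $H$ between $b$ and $S'-b$ is crossed by $\mathcal{R}$ exactly $p_b$ times, so deleting $E(\mathcal{R})$ lowers it by precisely $p_b$ and tightness is preserved. Granting this, the induction hypothesis produces $\tfrac12\sum_{b\in S'}d_{H'}(b)=\tfrac12\bigl(\sum_{b\neq a}d_H(b)-d_H(a)\bigr)$ edge-disjoint $S'$-paths, which use no edge of $E(\mathcal{R})$ and hence are edge-disjoint from the $d_H(a)$ paths of $\mathcal{R}$; together they form $d_H(a)+\tfrac12\bigl(\sum_{b\neq a}d_H(b)-d_H(a)\bigr)=\tfrac12\sum_{s\in S}d_H(s)$ edge-disjoint $S$-paths, which completes the induction.
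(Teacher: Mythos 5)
Your upper bound, and your reduction to a tight instance (uncrossing the minimum cuts $X_t$ into disjoint sets, contracting them, and concatenating along the Menger paths inside each $X_t$), are correct. The genuine gap is exactly where you locate it, and unfortunately the lemma you hope to prove there is false as stated: choosing the maximum family $\mathcal{R}$ of $a$--$(S-a)$ paths of minimum total length does \emph{not} force every minimum cut around another terminal $b$ to be crossed only $p_b$ times, and tightness of $H':=H-E(\mathcal{R})$ can indeed be destroyed. Concretely, take $S=\{a,b,c\}$ and one non-terminal $u$, with edges $ab$, $au$, $uc$ and two parallel edges $ub$. Then $d(a)=2$, $d(b)=3$, $d(c)=1$, $d(u)=4$, the graph is inner Eulerian, and it is tight at every terminal (the relevant cut sides are $\{a\},\{a,u\},\{b\},\{b,u\},\{c\},\{c,u\}$). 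A maximum family from $a$ has two members and minimum total length $3$; one minimizer is $\{ab,\ a u b\}$, which behaves well, but another minimizer is $\{ab,\ a u c\}$, after whose deletion only the two parallel $ub$ edges survive: now $d_{H'}(b)=2$ while $\lambda_{H'}(b,S'-b)=0$, so tightness at $b$ fails and one obtains only $2<3=\frac{1}{2}(2+3+1)$ paths in total. In particular the minimum $b$-cut with side $\{b,u\}$ (of size $3=d(b)$) is crossed three times by this minimizer although $p_b=1$, refuting the statement you wanted the uncrossing argument to deliver. So ``minimum total length'' is not a sufficient selection rule, and the crux you flagged is not a routine uncrossing but the actual heart of the theorem.

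For comparison, the route this paper's machinery specialises to in the finite case peels off one $T$-path at a time rather than all paths at one terminal at once: after the contraction step (the finite analogue of the wave elimination in Subsection \ref{subs: waveelim}), every cut side $X\supsetneq\{t\}$ with $X\cap T=\{t\}$ satisfies $d(X)\geq d(t)+2$ by the parity observation, so deleting any two edges keeps every such cut of size at least $d(t)$ and the linkability at $t$ survives (this is the finite case of Proposition \ref{p: key}); the single $T$-path through a prescribed edge $e\in\delta(s)$ whose removal preserves linkability is then produced by induction on its length using Lov\'asz's splitting of the first two edges, as in the proof of Claim \ref{cl: 1path make}. Alternatively, Lov\'asz's splitting-off lemma at non-terminal vertices yields another complete finite proof. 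To salvage your scheme you would need a genuinely stronger choice of $\mathcal{R}$ (or an iterative, path-by-path choice maintaining tightness), not merely length minimality.
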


 The literal extension of Theorem \ref{t:LCh} to infinite graphs fails. Indeed, let $ G=(V,E) $ be the graph we obtain from  the 
 star $ K_{1,3} $ by attaching a one-way 
infinite path to its central vertex (see Figure \ref{fig: cross}). We define $ T $ to be the set of vertices of degree one. Then we 
have only 
even degrees in $ V\setminus 
T $ 
and the maximal number of edge-disjoint $ T $-paths is $ 1 $ although  $ \frac{1}{2}\sum_{t\in T}\lambda(t, T-t)=\frac{3}{2} $.

\begin{figure}[h]
\centering
\begin{tikzpicture}

\node[circle, fill=black] (v1) at (0,0) {};
\node[circle, draw] (v2) at (0,-1) {};
\node[circle, fill=black] (v3) at (-1,-1) {};
\node[circle, fill=black] (v4) at (0,-2) {};
\node[circle, draw] (v5) at (1,-1) {};
\node[circle, draw] (v6) at (2,-1) {};
\node[circle, draw] (v7) at (3,-1) {};
\node at (3.5,-1) {$\dots$};

\draw  (v1) edge (v2);
\draw  (v2) edge (v3);
\draw  (v2) edge (v4);
\draw  (v2) edge (v5);
\draw  (v5) edge (v6);
\draw  (v6) edge (v7);
\end{tikzpicture}
\caption{The failure of the literal infinite generalisation of the Lovász-Cherkassky theorem. Elements of $ T $ are 
black.}\label{fig: 
failure}
\end{figure}
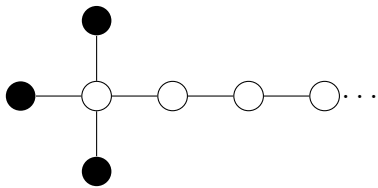 \label{fig: cross}

The reason of this discrepancy is that after allowing $ G $ to be infinite  the condition ``$ G $ is  Eulerian'' (i.e. $ E(G) 
$ can be 
partitioned into edge-disjoint cycles)  is no longer
 equivalent with the property that $ G $ has only even degrees.  Indeed, in the two-way infinite 
 path each 
 degree is $ 2 $ but it is 
 obviously not 
 Eulerian. On the other hand, graphs 
 with  infinite degrees can be easily Eulerian.
The characterization of infinite Eulerian graphs due to Nash-Williams is one of the fundamental theorems in infinite graph theory:
\begin{thm}[{Nash-Williams, \cite[p. 235, Theorem 3]{nash1960decomposition}}]\label{t: NW}
A (possibly infinite) graph is Eulerian if and only if it does not contain  an odd cut.\footnote{Infinite cardinals considered neither 
odd nor even.} 
\end{thm}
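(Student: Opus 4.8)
\emph{Proof proposal.} The plan is to prove the two implications separately, with the reverse one (``no odd cut $\Rightarrow$ Eulerian'') carrying all the weight. Write $\delta_G(X)$ for the set of edges of $G$ with exactly one end in $X$. For the forward implication, suppose $E(G)=\bigsqcup_i E(C_i)$ is a partition into edge sets of (finite) cycles and let $X\subseteq V(G)$ with $\delta_G(X)$ finite. Each $C_i$ crosses the bipartition $(X,V\setminus X)$ an even number of times, so $|E(C_i)\cap\delta_G(X)|$ is even; as $\delta_G(X)$ is finite only finitely many $C_i$ meet it, hence $|\delta_G(X)|=\sum_i|E(C_i)\cap\delta_G(X)|$ is a finite sum of even numbers, i.e.\ even. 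So $G$ has no odd cut.

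For the reverse implication I would first reduce to $G$ connected, since each component of $G$ again has no odd cut and a cycle decomposition can be assembled componentwise. Then I would isolate two elementary facts: (a) a graph with no odd cut has no bridge (a bridge $e$ yields $\delta_G(X)=\{e\}$ for $X$ a side of $G-e$), and in a connected bridgeless graph every edge lies on a finite cycle (close up a path avoiding $e$); and (b) deleting the edge set of a single finite cycle changes every finite cut by an even amount and leaves every infinite cut infinite, hence preserves ``no odd cut''. Using Zorn's lemma, take a maximal edge-disjoint family $\mathcal C$ of finite cycles and set $G':=G-E(\mathcal C)$. By maximality $G'$ contains no cycle, so $G'$ is a forest; if $G'$ still has no odd cut then by (a) it has no bridge, but every edge of a forest is a bridge, so $G'$ is edgeless and $\mathcal C$ partitions $E(G)$, as desired.

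Everything therefore reduces to the claim that deleting $Z:=E(\mathcal C)$ — a disjoint union of finite cycles — from a graph with no odd cut again leaves a graph with no odd cut, and \textbf{this is the step I expect to be the main obstacle}, because the finite case of (b) does not suffice: one must rule out a set $X$ with $\delta_G(X)$ \emph{infinite} but $\delta_{G'}(X)=\delta_G(X)\setminus Z$ finite and odd — a phenomenon impossible in the finite setting. The line of attack I would pursue is to reduce, by passing to a component of $G'[X]$, to the case that $X$ induces a connected subgraph of $G'$ while $|\delta_{G'}(X)|$ stays odd; then $\delta_G(X)$ being infinite forces infinitely many cycles of $\mathcal C$ to cross $(X,V\setminus X)$, and one uses the arcs of finitely many such crossing cycles that lie inside $X$ (available because $G'[X]$ is connected, so the endpoints involved can be joined back up) to ``re-route'' and manufacture from $F:=\delta_{G'}(X)$ a genuine \emph{finite} cut of $G$ of the same parity as $|F|$ — contradicting that $G$ has no odd cut. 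Equivalently, the argument can be organised as a transfinite recursion that removes a finite cycle through the least remaining edge in a fixed well-ordering of $E(G)$, the successor steps being handled by (a)+(b) and this same lemma being exactly the content of the limit steps; as a fallback one can first reduce to locally finite $G$, where infinite cuts are easier to control, and dispose of vertices of infinite degree by a preliminary splitting.
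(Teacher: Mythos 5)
Your forward implication is fine, but the reverse implication contains a fatal gap: the lemma you yourself flag as ``the main obstacle'' --- that deleting the edge set of an (arbitrary maximal) edge-disjoint family $\mathcal{C}$ of finite cycles from a graph with no odd cut again leaves a graph with no odd cut --- is simply \emph{false}, so no line of attack can establish it. Take $G=K_{2,\aleph_0}$ with vertex classes $\{a,b\}$ and $\{c_1,c_2,\dots\}$. Every finite cut of $G$ is even (a cut is finite only if it is $\delta(S)$ or $\delta(\{a,b\}\cup S)$ for a finite, resp.\ cofinite, set $S$ of $c_i$'s, and then it has size $2|S|$, resp.\ twice a finite number), so $G$ has no odd cut; indeed $G$ is Eulerian via the $4$-cycles $a\,c_{2i-1}\,b\,c_{2i}\,a$. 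But now let $\mathcal{C}'$ consist of the $4$-cycles $a\,c_{2k}\,b\,c_{2k+1}\,a$ for $k\geq 1$. These are edge-disjoint, and $G-E(\mathcal{C}')$ is the path $a\,c_1\,b$, which contains no cycle, so $\mathcal{C}'$ is a \emph{maximal} edge-disjoint family of cycles that fails to cover $E(G)$; moreover $\delta_{G-E(\mathcal{C}')}(a)=\{ac_1\}$ is an odd cut created from the infinite cut $\delta_G(a)$. This kills the Zorn's-lemma strategy outright: a maximal family of edge-disjoint cycles need not be a decomposition, and your proposed ``re-routing'' argument is trying to prove a false statement. The same problem defeats the transfinite-recursion fallback at limit stages (greedily choosing a cycle through the least uncovered edge does not by itself prevent an infinite cut from being eaten down to a finite odd one), and the ``reduce to locally finite'' fallback is not available either, since vertices of infinite degree --- as in the example above --- are precisely where the difficulty sits and cannot be split away while preserving the cut condition.

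For comparison: the paper does not prove Theorem~\ref{t: NW} at all; it cites Nash-Williams' original argument and the later proofs of Soukup and Thomassen. All of these work precisely because they choose the cycles with foresight rather than blindly: Soukup, for instance, uses an elementary-submodel filtration to cut the graph into pieces on which the evenness condition is inherited and a careful recursion then succeeds, while Thomassen's proof likewise controls how infinite cuts are consumed. The genuine content of the theorem is exactly the phenomenon your counterexample-free sketch glosses over, so the proposal as written does not constitute a proof.
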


Simpler proofs for Theorem \ref{t: NW} were given by L. Soukup 
(\cite[Theorem 5.1 ]{soukup2011elementary})  and 
Thomassen  \cite{thomassen2017nash}
while its analogue  for 
directed graphs  (conjectured by Thomassen)  was settled affirmatively in \cite{joo2021dicycleDecomp}. Theorem 
\ref{t: NW} indicates that the condition ``for every $ v\in V\setminus T $: $ d(v) $ is even''  should be replaced by ``for every $ 
X\subseteq V\setminus T $: $ d(X) $ is 
not odd''  in order to allow infinite graphs. Note that in finite graphs the former condition is equivalent to the statement that 
`contracting $ T $ results
in an Eulerian graph' and by Theorem \ref{t: NW} the latter condition is equivalent to the same but  for graphs of any size.

The literal adaptation of the formula $ \frac{1}{2}\sum_{t\in T}\lambda(t, T-t) $ is also not really fruitful in the presence of 
infinite quantities. Consider for example  the 
graph $ (\{ u,v \}, E) $ with $ T= \{ u,v \}$  where $ E $ consists of $ \aleph_0 $  parallel edges between $ u $ and $ v $. 
Then any infinite $ \mathcal{P}\subseteq E $,  considered as a set of paths of length one, has the same size $ \aleph_0 $. 
It demonstrates that cardinality is an overly 
rough measure in the presence of 
infinite quantities and  urges us  to focus on combinatorial instead of quantitative properties of an optimal 
path-system 
in Theorem \ref{t:LCh}. In a finite graph a system $\mathcal{P}$ of edge-disjoint $T$-paths has  $ {\frac{1}{2}\sum_{t\in 
T}\lambda(t,T-t)} $ elements  if and only if $\mathcal{P}$ contains  
$\lambda(t,T-t)$ paths between $t$ and $T-t$ for each  $t\in T$. By Menger's theorem it is equivalent to the statement that for 
every 
$t\in T$ one can choose exactly one edge from each $P\in \mathcal{P}$ having $t$ as an end-vertex such that the resulting edge 
set $C$ is a cut separating $t$ from $T-t$. Now we are ready to state our main results: 

\begin{thm}\label{t:LCh inf}
Let $G$ be a  graph and let $T\subseteq V(G)$  be countable such that  there is no $ X\subseteq V(G)\setminus 
T $ 
where $ d_G(X) $ is an odd natural number. Then there exists a system 
$\mathcal{P}$ of edge-disjoint $T$-paths such that for every $t\in T$: one can choose exactly one edge from each $P\in 
\mathcal{P}$ having $t$ as an end-vertex in such a way that the resulting edge set $C$ is a cut separating $t$ and $T-t$. 
\end{thm}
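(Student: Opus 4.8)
The plan is to reduce the theorem to the countable case of the Aharoni--Berger theorem by constructing the path-system as a ``limit'' of suitable finite approximations, using the Nash-Williams characterisation to keep the Eulerian-type condition alive along the way. Let me sketch the steps.

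First I would fix an enumeration of $E(G)$ and of the countable terminal set $T=\{t_0,t_1,\dots\}$, and aim to build $\mathcal{P}$ together with the witnessing cuts $(C_t)_{t\in T}$ simultaneously. The natural strategy is a transfinite/recursive exhaustion: write $G$ as an increasing union of finite subgraphs $G_0\subseteq G_1\subseteq\cdots$ with $\bigcup_n G_n=G$, but this does not interact well with the parity condition, since a finite subgraph of an inner-Eulerian graph need not be inner-Eulerian. So instead I would work directly with the infinite graph and invoke the Aharoni--Berger theorem for each terminal. Concretely: contract $T$ in $G$ to a single vertex $t^{*}$ (keeping multiplicities, deleting loops) to obtain $G/T$; by Theorem~\ref{t: NW} the hypothesis says $G/T$ has no odd cut, hence $G/T$ is Eulerian, so $E(G/T)$ decomposes into edge-disjoint finite cycles (in an Eulerian graph of any size the edge set decomposes into finite cycles — this is part of the Nash-Williams theorem / its standard proofs). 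Splitting each terminal vertex back apart, these cycles become a family of edge-disjoint $T$-paths together with some closed walks avoiding $T$ internally that start and end at the same terminal; discarding the latter I obtain an edge-disjoint system $\mathcal{P}_0$ of $T$-paths using a prescribed set of edges at each $t$. The point of this first move is that it already produces the ``paths'' for free; the real content is getting the cut-witnesses right.

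Next I would attack the witnessing cuts one terminal at a time. For a fixed $t\in T$, consider the graph $G$ with the single source $t$ and the single sink obtained by contracting $T-t$ into one vertex $s_t$; this is a graph to which the Aharoni--Berger theorem applies, yielding a system $\mathcal{Q}_t$ of edge-disjoint $t$--$s_t$ paths together with a separating edge set $C_t$ meeting each path of $\mathcal{Q}_t$ in exactly one edge, with $C_t$ a cut between $t$ and $T-t$ and $|C_t|=|\mathcal{Q}_t|$ in the strong-maximality sense of the Aharoni--Berger theorem. Pulling $\mathcal{Q}_t$ back to $G$ gives edge-disjoint paths from $t$ that reach $T-t$ but may pass through other terminals; I would truncate each at its first terminal after $t$ to get genuine $T$-paths starting at $t$, so that $C_t$ still meets each truncated path exactly once (the chosen edge of $C_t$ on a path lies on the $t$-side cut, hence before the first re-entry to $T$) and still separates $t$ from $T-t$. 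The crucial requirement is that these per-terminal path-systems and cuts be made \emph{mutually edge-disjoint} and assembled into one global $\mathcal{P}$ in which a single $T$-path uses its designated $C_t$-edge at $t$ and its designated $C_{t'}$-edge at its other endpoint $t'$ simultaneously.

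The hard part — and where I expect the argument to really live — is exactly this simultaneous/global coherence: reconciling the independently obtained $(\mathcal{Q}_t, C_t)$ over all $t\in T$ into a single edge-disjoint family. In the finite Lovász--Cherkassky proof this is handled by induction on the number of edges via a clever edge-deletion or by an uncrossing argument on the cuts, neither of which survives verbatim. My plan is to use the Eulerian decomposition $\mathcal{P}_0$ from the first step as the backbone: $\mathcal{P}_0$ already assigns to each $t$ a set $E_t$ of incident edges, one on each $T$-path through $t$, and $|E_t|=\tfrac12 d_{G/T}(\text{at }t)$ in the relevant sense; I would then show, using submodularity/uncrossing of cuts together with a ``local'' application of Menger at each terminal, that one can rechoose the system so that each $E_t$ can be extended to a full cut $C_t$ separating $t$ from $T-t$ — i.e.\ that $E_t$ is already a minimum $t$--$(T-t)$ cut's worth of edges at $t$. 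Concretely, the parity hypothesis ($G$ inner-Eulerian, no odd finite cut avoiding $T$) is what forces $\lambda_G(t,T-t)$ to equal the number of $T$-paths of $\mathcal{P}_0$ through $t$, by a counting/parity argument on the cut separating a finite witness set: any finite cut $C$ between $t$ and $T-t$ decomposes, via the cycle decomposition, and a parity computation on $d(X)$ for the $t$-side $X$ shows it cannot be smaller than $|E_t|$. Then a single application of Aharoni--Berger for each $t$ converts this optimality into the desired edge set $C_t$, and edge-disjointness across different $t$'s is automatic because distinct terminals' chosen edges sit on distinct $T$-paths of the fixed backbone $\mathcal{P}_0$. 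Managing the truncations (so a path that originally ran $t\to\cdots\to t'$ genuinely witnesses optimality at \emph{both} ends) and verifying that no odd-cut obstruction appears after contracting $T-t$ — since contraction can create odd cuts — are the two technical points I would expect to occupy most of the write-up.
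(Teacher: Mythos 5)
Your plan has a genuine gap at precisely the point you flag as ``where the argument really lives'', and the specific mechanism you propose for closing it does not work. First, the backbone $\mathcal{P}_0$ obtained from the cycle decomposition of $G/T$ is not optimal at the terminals: a cycle through the contracted vertex may use two edges incident with the \emph{same} terminal (so after splitting it is a closed walk you discard, and $\mathcal{P}_0$ need not even cover $\delta(t)$), and even when $\mathcal{P}_0$ does cover $\delta(t)$ the number of its paths through $t$ is roughly $\tfrac12 d(t)$, which can strictly exceed $\lambda_G(t,T-t)$ (take $t$ joined by four parallel edges to a non-terminal $v$ that has only two further edges). The parity computation you invoke only shows $d(X)\equiv d(t)\pmod 2$ for $X\cap T=\{t\}$ in the finite case; it does not show that $|E_t|$ equals the value of a minimum $t$--$(T-t)$ cut, so ``$E_t$ extends to a witnessing cut $C_t$'' is unsubstantiated. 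Second, even granting per-terminal optimal pairs $(\mathcal{Q}_t,C_t)$ from Aharoni--Berger, the claim that edge-disjointness and coherence across terminals is ``automatic because distinct terminals' chosen edges sit on distinct $T$-paths of $\mathcal{P}_0$'' is not an argument: the systems $\mathcal{Q}_t$ are produced by independent applications of Aharoni--Berger, bear no relation to $\mathcal{P}_0$, and a single $T$-path must simultaneously carry a designated $C_t$-edge at one end and a designated $C_{t'}$-edge at the other. No uncrossing of the cuts alone forces this.

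For comparison, the paper resolves exactly these two issues with machinery your sketch does not contain. The witnessing cuts are produced \emph{first}, by a transfinite ``wave elimination'': for each terminal one takes a large $t(T-t)$-wave, contracts its $t$-side, and iterates; this both fixes the cuts $C_{\mathcal{W}_t}$ in advance and reduces the problem to a graph in which $\delta(t)$ itself is the only Erd\H{o}s--Menger cut, i.e.\ to the ``linkability'' formulation (Theorem \ref{t: LCh1 infIntro}): cover \emph{all} edges at every terminal. The cycle decomposition of $G/T$ is then used only to cut $E$ into countable closed pieces, not as a backbone for the final system. The remaining core is a one-path-at-a-time extraction lemma (for every $e\in\delta(t)$ there is a $T$-path through $e$ whose deletion preserves linkability), proved by a minimal-counterexample argument with Lov\'asz's splitting-off, tight Erd\H{o}s--Menger cuts and the augmenting-path lemma. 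Some such local preservation statement, or a substitute for it, is unavoidable; your proposal currently has nothing playing that role.
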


We also prove the following closely related theorem.

\begin{thm}\label{t: LCh1 infIntro}
Let $G$ be a  graph and let $T\subseteq V(G)$  be countable such that  there is no $ X\subseteq V(G)\setminus 
T $ 
where $ d_G(X) $ is an odd natural number. Assume that for each $t\in T$  
there is a system $\mathcal{P}_t$ of edge-disjoint $T$-paths covering all the edges incident with $ t $. Then there exists a system 
$\mathcal{P}$ of 
edge-disjoint $T$-paths covering all the edges incident with any $ t\in T $. 
\end{thm}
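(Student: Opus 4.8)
The plan is to pass to a normal form, record two structural facts, settle the terminals of finite degree by Theorem~\ref{t:LCh inf}, and then amalgamate the local covering families by a recursion along an enumeration of $T$ together with a ``freezing'' bookkeeping; the reconciliation inside that recursion is the hard point.

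First I would assume $G$ is connected and $T$ is independent: components are treated separately, and an edge with both ends in $T$ is itself a $T$-path, so it is put into $\mathcal P$ at once and deleted. I would then rephrase the hypothesis. Discarding from $\mathcal P_t$ the paths not meeting $t$ leaves an edge-disjoint family of $t$-$(T-t)$ paths covering $\partial t:=\{e\in E(G):\ e\ \text{is incident with}\ t\}$; such a family has at most $d_G(t)$ members, while every $t$-$(T-t)$ cut has at least $\lambda_G(t,T-t)$ edges, so the existence of $\mathcal P_t$ forces $\lambda_G(t,T-t)=d_G(t)$. Conversely, that equality yields a covering family by the Aharoni-Berger theorem (the separator it produces has size $\ge d_G(t)$, hence there are $\ge d_G(t)$ edge-disjoint $t$-$(T-t)$ paths, which start at $t$ through pairwise distinct edges of $\partial t$ and therefore exhaust $\partial t$). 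So the hypothesis simply says $\lambda_G(t,T-t)=d_G(t)$ for every $t\in T$.

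Two observations then drive the argument. First, contracting $T$ to a single vertex $t^{\ast}$ gives a graph $G/T$ in which every cut equals $d_G(Y)$ for some $Y\subseteq V(G)\setminus T$ (replace a cut by its complement to avoid $t^{\ast}$); none of these is an odd natural number, so $G/T$ is Eulerian by Theorem~\ref{t: NW}. Second, \emph{deleting any family of $T$-paths preserves the hypothesis ``$d_G(X)$ is never an odd natural number for $X\subseteq V(G)\setminus T$''}: a $T$-path has both endpoints in $T$, hence outside such an $X$, so it crosses the edge-boundary of $X$ an even number of times, and deleting it changes $d_G(X)$ by an even finite amount, preserving its parity and its being infinite. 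This robustness is what allows the recursion below to run without degrading the hypothesis. Now the finite-degree case falls out of Theorem~\ref{t:LCh inf}: taking the system $\mathcal P$ and the cuts $C_t$ it provides, $|C_t|\le d_G(t)$ (distinct edges from distinct paths at $t$) and $|C_t|\ge\lambda_G(t,T-t)=d_G(t)$, so $|C_t|=d_G(t)$ and $\mathcal P$ contains exactly $d_G(t)$ $T$-paths ending at $t$ through pairwise distinct edges of $\partial t$; if $d_G(t)<\infty$ these are all of $\partial t$. Thus only infinite-degree terminals, i.e. the genuine amalgamation, remain.

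For the general case I would fix an enumeration $T=\{t_0,t_1,\dots\}$ and build $\mathcal P$ in stages, keeping a set of \emph{frozen} edges (those on already-committed paths) and the invariant that, writing $H_n:=G$ minus the frozen edges: every $\partial t_i$ with $i<n$ is covered, a processed terminal has no non-frozen incident edge, and $\lambda_{H_n}(t,T-t)=d_{H_n}(t)$ for every not-yet-processed $t$. The base case is the reformulated hypothesis, and the ``no odd cut'' feature of the ambient graph persists by the second observation. At stage $n$ the invariant (via Aharoni-Berger) yields a covering family $\mathcal R_n$ for the non-frozen edges of $\partial t_n$ inside $H_n$; committing $\mathcal R_n$ covers $\partial t_n$ and freezes $E(\mathcal R_n)$. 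The union of all committed paths is then a system of edge-disjoint $T$-paths in which every edge incident with every $t_n$ has been frozen, hence covered, by the end of stage $n$ and is never touched again, which is the required $\mathcal P$. The main obstacle---the step I expect to be delicate---is maintaining the last clause of the invariant: deleting the freshly committed $\mathcal R_n$ must not lower $\lambda_H(t_m,T-t_m)$ below $d_H(t_m)$ for any later $t_m$, and the ``no odd cut'' property alone does \emph{not} force $\{t_m\}$ to be a minimum $t_m$-$(T-t_m)$ cut (a parity computation gives only that the boundary of a set meeting $T$ exactly in $t_m$ has the parity of $d(t_m)$). I expect this to be handled by a back-and-forth refinement of the type used to prove the Aharoni-Berger theorem itself: whenever a commitment would threaten a later terminal one re-selects, re-establishing a covering family at each affected terminal by Menger's theorem in the (still ``odd-cut-free'') deleted graph, with a bookkeeping argument ensuring that the status of every individual edge stabilises after finitely many changes so that the process converges; some extra care is needed with infinite degrees (a covering family must be tracked edge-by-edge, not merely in cardinality), and since $T$ is countable only $\omega$ many routing tasks ever arise even when $G$ itself is uncountable.
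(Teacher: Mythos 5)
There is a genuine gap, in fact two. First, your reformulation of the hypothesis as $\lambda_G(t,T-t)=d_G(t)$ is not equivalent to the coverability of $\delta(t)$ once $d_G(t)$ is infinite: cardinality is too coarse a measure here. For example, join $t$ to $u$ by $\aleph_0$ parallel edges, $u$ to $t'\in T$ by $\aleph_0$ parallel edges, and attach one further edge $tw$ with $w$ of degree $1$; then $\lambda_G(t,T-t)=d_G(t)=\aleph_0$, yet $tw$ lies on no $T$-path, so no edge-disjoint family covers $\delta(t)$. (The Aharoni--Berger theorem gives a path system orthogonal to \emph{some} Erd\H{o}s--Menger cut, which need not be $\delta(t)$.) Consequently the last clause of your invariant, ``$\lambda_{H_n}(t,T-t)=d_{H_n}(t)$ for every unprocessed $t$'', does not yield the covering family $\mathcal{R}_n$ you extract at stage $n$; the invariant has to be the coverability of $\delta_{H_n}(t)$ itself (what the paper calls the linkability condition, characterised via waves: Corollary \ref{cor: delta(s) coverable}), not a cardinality identity. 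Relatedly, invoking Theorem \ref{t:LCh inf} for the finite-degree terminals is circular in this paper's architecture, since Theorem \ref{t:LCh inf} is itself derived from the statement you are proving.

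Second, and more seriously, the step you flag as ``delicate'' and propose to handle by an unspecified back-and-forth with stabilising bookkeeping is not a technical loose end but the entire content of the theorem. The paper's proof rests on Claim \ref{cl: 1path make}: for every $t\in T$ and $e\in\delta(t)$ there is a \emph{single} $T$-path $P$ through $e$ such that $G-E(P)$ still satisfies the linkability condition for \emph{all} terminals; one then recurses edge by edge (after first cutting $E$ into countable $c$-closed pieces so that only countably many edges meet $T$). Proving that claim requires wave elimination, the notion of an $s$-tight Erd\H{o}s--Menger cut (Lemma \ref{l:tight cut}), Proposition \ref{p: key}, and a minimal-counterexample argument using Lov\'asz's splitting-off technique. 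Nothing in your proposal substitutes for this; moreover, committing an entire (possibly infinite) family $\mathcal{R}_n$ at each stage makes the preservation problem strictly harder than the one-path-at-a-time version, and your claim that ``the status of every individual edge stabilises after finitely many changes'' is asserted rather than argued --- there is no reason a re-selection triggered by terminal $t_m$ cannot undo the coverage already secured at $t_i$ for $i<m$ infinitely often. As written, the proposal reduces the theorem to exactly the statement that needs proof.
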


We strongly believe that the countability of  $ T $  can be omitted in the theorems above.  However, based on the experience with 
the similar problems mentioned earlier, we suspect that the proof is significantly harder.
   
Mader gave in \cite{mader1978maximalzahl} a minimax theorem about the maximal number of edge-disjoint $ T $-paths   in 
arbitrary (i.e. not necessarily inner Eulerian) finite graphs. It can be considered as a generalisation of Theorem \ref{t:LCh}.  The 
structural and 
algorithmic aspects of the problem have been  a subject of interest ever since (see for example \cite{sebHo2004path}, 
\cite{keijsper2006linear}, \cite{babenko2017faster} and \cite{iwata2020blossom}) as well the analogous theorems 
considering 
vertex-disjoint \cite{gallai1964maximum} and internally vertex-disjoint \cite{mader1978maximalzahl} paths.

\begin{conj}\label{conj: strongly maxim}
Let $G$ be a  graph and let $T\subseteq V(G)$. Then there exists a strongly maximal system $ \mathcal{P} $ of 
edge-disjoint/vertex-disjoint/internally vertex-disjoint $ T $-paths in $ G $. 
\end{conj}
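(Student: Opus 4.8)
The plan is to carry the conjecture through the three-step template that the introduction identifies behind the theorems of Aharoni, Aharoni--Berger and Jo\'{o}: (i) isolate the correct \emph{dual object} and the complementary-slackness relation it should satisfy with a path-system; (ii) show that a path-system admitting such a dual object is automatically strongly maximal; and (iii) construct the two simultaneously. For the edge-disjoint variant the dual object should be the transfinite analogue of the barrier in Mader's minimax theorem \cite{mader1978maximalzahl}---a set $U_{0}\subseteq V(G)\setminus T$ of ``inner'' vertices together with a partition of $V(G)\setminus U_{0}$ whose classes are labelled by the terminals---and the slackness condition should ask that every path of $\mathcal{P}$ cross the boundary of this barrier exactly as often as the formula allows while a matching lower bound is attained; in the inner-Eulerian case it should reduce to the condition of Theorems~\ref{t:LCh inf} and \ref{t: LCh1 infIntro}, the existence of cuts $C_{t}$ ($t\in T$), each separating $t$ from $T-t$ and selecting exactly one edge of every path of $\mathcal{P}$ incident with $t$. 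The vertex-disjoint and internally-vertex-disjoint variants I would reduce to the edge-disjoint one via the classical splitting gadgets (and, for the vertex-disjoint case of Gallai \cite{gallai1964maximum}, alternatively to an infinite matching problem); unlike in Theorems~\ref{t:LCh inf}--\ref{t: LCh1 infIntro}, no Eulerian hypothesis is present here, so the infinite degrees that such local surgeries may create are harmless.

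Step~(ii) should be the more routine half. In the inner-Eulerian case it is already short: if each $C_{t}$ separates $t$ from $T-t$ and picks exactly one edge of each path of $\mathcal{P}$ incident with $t$, then for any competing system $\mathcal{Q}$ one obtains, terminal by terminal, an injection from the $\mathcal{Q}$-paths incident with $t$ that lie outside $\mathcal{P}$ into the $\mathcal{P}$-paths incident with $t$ that lie outside $\mathcal{Q}$ (such a $\mathcal{Q}$-path must cross $C_{t}$, hence meets a unique path of $\mathcal{P}$ at $t$, which cannot also belong to $\mathcal{Q}$); summing over $t$---where the factor $2$ coming from counting each path at both of its endpoints cancels on the two sides---yields strong maximality of $\mathcal{P}$. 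Hence Theorems~\ref{t:LCh inf}--\ref{t: LCh1 infIntro} already settle the edge-disjoint, inner-Eulerian case of the conjecture. For a general Mader barrier step~(ii) should likewise be an edge-by-edge transcription of the proof that Mader's quantity is an upper bound.

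The real content is therefore step~(iii): producing a strongly maximal $\mathcal{P}$ \emph{together with} its barrier. I would attempt a well-founded recursion in the style of the Aharoni--Berger proof of the infinite Menger theorem \cite{aharoni2009menger}---for an uncountable $G$, run along an increasing, continuous chain of elementary submodels---whose basic move enlarges the partial path-system while maintaining a ``partial barrier'', the limit steps being governed by a notion of \emph{wave} adapted to $T$-path packings. The construction of such a wave theory is the main obstacle, and it is the reason the statement is only a conjecture: Menger's theorem has honest augmenting paths and a clean lattice of waves in which to take infima, whereas Mader's theorem has neither---already in finite graphs its proof rests on delicate ``alternating forest'' and blossom-type structures rather than on single augmenting paths, and no transfinite counterpart of these is presently available. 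Two further layers of difficulty should be kept in mind. Dropping the inner-Eulerian hypothesis removes the Nash-Williams decomposition (Theorem~\ref{t: NW}) that drives the proofs of Theorems~\ref{t:LCh inf} and \ref{t: LCh1 infIntro}, so even the edge-disjoint, \emph{countable} case in full (non-Eulerian) generality is open; and removing the countability of $T$ brings in the phenomena already familiar from Hall's and König's theorems and from the Matroid Intersection Conjecture \cite{joo2020MIC}, where the uncountable case required substantially new ideas. A sensible first target is hence the edge-disjoint variant with $T$ countable---whose inner-Eulerian sub-case is supplied by Theorems~\ref{t:LCh inf}--\ref{t: LCh1 infIntro}---aiming to remove the inner-Eulerian hypothesis.
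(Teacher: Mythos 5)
This statement is a \emph{conjecture} in the paper, not a theorem: the author proves only the edge-disjoint, inner-Eulerian case with $T$ countable (Theorems \ref{t:LCh inf} and \ref{t: LCh1 infIntro}) and leaves Conjecture \ref{conj: strongly maxim} open, together with the refined duality statements of Section \ref{sec: oultook}. Your submission is, by your own account, a programme rather than a proof --- step (iii), the simultaneous construction of the path-system and its barrier, is exactly where you stop --- so there is no complete argument to verify, and the conjecture remains unproved. Within those limits, much of what you do assert is sound and consistent with the paper: your candidate dual objects are essentially the paper's Conditions \ref{cd: Mader T-pathWeak}/\ref{cd: Mader T-path} and Conjectures \ref{conj: Gallai vertex-disj}--\ref{conj: Mader internally-disj}; your step (ii) argument in the inner-Eulerian case (inject each $Q\in\mathcal{Q}\setminus\mathcal{P}$ ending at $t$ into the unique $P\in\mathcal{P}$ owning the edge where $Q$ crosses $C_t$, then sum over $t$ with the factor $2$ cancelling) is correct and does show that Theorems \ref{t:LCh inf}--\ref{t: LCh1 infIntro} settle that sub-case; and the paper itself verifies the analogous implication $(iii)\Rightarrow(ii)\Rightarrow(i)$ for Conjecture \ref{conj: edgeMaderEquiv}. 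Your diagnosis of the obstruction --- the absence of a wave/augmenting-path calculus for Mader-type packings, on top of the loss of the Nash-Williams decomposition once the inner-Eulerian hypothesis is dropped --- also matches the paper's own assessment.

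One concrete step in your plan is unjustified and, I believe, wrong as stated: the reduction of the vertex-disjoint and internally vertex-disjoint variants to the edge-disjoint one ``via the classical splitting gadgets.'' Those gadgets are tailored to two-terminal Menger-type statements (the paper invokes them only in that setting in Section \ref{sec: premi}); for $T$-path packings they do not give a bijection between competing systems that preserves the cardinalities $\left|\mathcal{Q}\setminus\mathcal{P}\right|$ and $\left|\mathcal{P}\setminus\mathcal{Q}\right|$, which is what strong maximality requires, and already in the finite case Gallai's and Mader's vertex-disjoint theorems are not obtained from the edge-disjoint Mader theorem by such local surgery (Gallai's theorem reduces to matching theory, and the paper accordingly routes its vertex-disjoint conjecture through Aharoni's infinite matching structure, Theorem \ref{t: stmMatching}, rather than through the edge-disjoint case). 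So even granting a solution of the edge-disjoint case, the other two variants would remain open under your scheme. The honest summary is that your proposal correctly maps the terrain but supplies neither the new wave theory needed for step (iii) nor a valid bridge between the three disjointness notions.
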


We  conjecture that the path-systems $ \mathcal{P} $ in Conjecture \ref{conj: strongly maxim} can be characterized in the 
way that it extends the corresponding minimax theorem to infinite graphs based on complementary slackness conditions. We 
discuss the details in Section \ref{sec: oultook}. Before we turn to the proof of our main results in Section \ref{sec: 
proofLovCHer}, we need to introduce some notation 
and recall a few results we are going to use in the proof. These are done in Sections \ref{sec: notation} and \ref{sec: premi} 
respectively.

\section{Notation}\label{sec: notation}
In graphs we allow parallel edges but not loops. Technically we represent a graph as a triple $ G=(V,E,I) $ where the 
\emph{incidence 
function} $ \boldsymbol{I}:E\rightarrow [V]^{2} $ determines the end-vertices of the edges. 
For $ X\subseteq V $ let $ \boldsymbol{\delta_G(X)}:=\{ e\in E:  \left|I(e)\cap X\right|=1  \} $ and we write $ 
\boldsymbol{d_G(X)} $ for $ 
\left|\delta_G(X)\right| $.  If  a graph $ G $ is obvious from the context, then we omit the subscript, furthermore, for a singleton $ 
\{ 
x \} $ we write simply $ \delta(x) $ and $ d(x) $. Instead of a graph $ G $ we may give a path-system or an edge set of a 
fixed graph $ G $ as a subscript of $ \delta $. In these cases we mean the corresponding subgraph of $ G $.  All the paths in the 
paper are finite. We refer 
sometimes the first vertex or last 
edge of a path. The context will always indicate according which direction we mean this. The first and last edges are the 
\emph{extremal edges} of the path. An $ AB $\emph{-path} for $ 
A,B\subseteq V $ 
is a path with first 
vertex in $ A $ 
last vertex in $ B $ and no internal vertices in $ A\cup B $.   A set  $ C\subseteq E $ is a \emph{cut} if $ C=\delta(X) $ for some 
$ 
X\subseteq 
V $. If 
$ G $ is connected then $ X $ is  determined by $ C $  up to taking complement and the $ v $\emph{-side}
of the cut $ C $ is the unique $ X $ with $ C=\delta(X) $ and $ v\in X $. We call $ \delta(X) $ an $ AB $-cut if $ A\subseteq X $ 
and $ B\cap X =\varnothing $ or the other way around. If $ A $ and $ B $ are singletons, $ A=\{ s \} $ and $ B=\{ t \} $ say, then 
we write simply $ st $-cut instead of $ \{ s \}\{ t \} $-cut.   In a connected graph 
$ G $,  a cut  $ \delta(X) 
$ is $ \subseteq 
$-minimal  if 
and only if the induced subgraphs $ G[X] $ and $ G[V\setminus X] $ are connected. We extend the definitions above for 
disconnected graphs $ G $ and cuts $ C $ living in a single connected component $ M $ by considering $ C $ as a cut in $ G[M] $.
For a $ U\subseteq V $ and a family $ \mathcal{F}=\{ X_u: u\in U \} $ of pairwise disjoint 
subsets of $ V $ with $ X_u\cap U=\{ u \} $, we define the graph  $ \boldsymbol{G/\mathcal{F}} $  obtained 
from $ G $ by \emph{contracting} $ X_u$ to $u $ for $ u\in U $ and deleting the resulting loops. More formally  
$V(G/\mathcal{F}):= 
(V\setminus \bigcup \mathcal{F} 
)\cup U,\ E(G/\mathcal{F}):= E\setminus \{ e\in E: (\exists u\in U) I(e)\subseteq X_u \} $ and  $ 
I(G/\mathcal{F})(e):=\{ i_\mathcal{F}(u), i_\mathcal{F}(v) \} $  where  $ I(e)=\{ u,v \} $ and

\[ i_\mathcal{F}(v) =\begin{cases} v &\mbox{if } v\notin \bigcup \mathcal{F} \\
u & \mbox{if } u\in X_u .  
\end{cases} \]

\section{Preliminaries}\label{sec: premi}
Menger's theorem and the other connectivity-related results that we recall in this section  have  four versions depending on 
if the graph is directed and if we consider vertex-disjoint or edge-disjoint paths. In all of these theorems  the two 
directed variants are 
equivalent as well as the two undirected variants which can be shown by simple techniques like splitting edges by a new vertex 
and blowing up vertices to a highly connected vertex sets. Furthermore, through replacing undirected edges by  back and forth 
directed ones the undirected vertex-disjoint version can be reduced to the directed one.
 
In this paper we deal only with undirected graphs and edge-disjoint paths so let us always formulate  immediately that variant 
even if historically other version was proved first.

Let a connected graph $ G=(V,E)$ and distinct $ s,t\in V$ be fixed.  For   $ 
\subseteq $-minimal $ st $-cuts $ C $ and $ D $ 
we write $ 
C\boldsymbol{\preceq} D $ if the $ 
s $-side of the cut $ C $ is a subset of the $ s $-side of $ D $. Note that the $ \subseteq $-minimal $ st $-cuts with $ \preceq $  
form a complete lattice.  
For a 
finite $ G $  the optimal (minimal-sized)
$ st $-cuts  form a distributive sublattice (see \cite{escalante1974note}) of it. In general graphs 
 the size of the cut is an overly rough measure for optimality. A structural infinite generalisation of the class of 
``optimal'' $ st $-cuts is provided by the Aharoni-Berger theorem:
  
\begin{thm}[Aharoni and Berger, \cite{aharoni2009menger}]\label{t: Inf Menger}
Let $ G$ be a (possibly infinite)  graph and let $ s,t\in V(G) $ be distinct. Then there is a system $ 
\mathcal{P} $ of 
edge-disjoint $ st $-paths and an $ st $-cut  $ C $ which is \emph{orthogonal} to $ \mathcal{P} $, i.e. $ C $ consists of 
choosing 
exactly one edge from each path in 
$ \mathcal{P} $.
\end{thm}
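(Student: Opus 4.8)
The plan is to recognise Theorem~\ref{t: Inf Menger} as the undirected, edge-disjoint face of the Aharoni--Berger theorem \cite{aharoni2009menger} (the countable case of which is older and due to Aharoni), and to prove it through two reductions followed by the \emph{wave} machinery. For the first reduction: pass from $G$ to the ``line graph with endpoints'' $H$ whose vertex set is $\{s,t\}\cup E(G)$, with $e$ adjacent to $e'$ when $I(e)\cap I(e')\neq\varnothing$, with $s$ adjacent to every $e\in\delta_G(s)$ and $t$ adjacent to every $e\in\delta_G(t)$. Then edge-disjoint $st$-paths of $G$ correspond to internally vertex-disjoint $st$-paths of $H$, an inclusion-minimal $st$-vertex-separator of $H$ avoiding $\{s,t\}$ pulls back to an $st$-cut $\delta_G(X)$, and orthogonality is preserved; a second, routine reduction (bidirecting the edges, splitting off $s$ and $t$ as in the remarks preceding the statement) brings us to: \emph{for distinct $s,t$ in a digraph $D$ there is a family $\mathcal P$ of internally vertex-disjoint $st$-dipaths and an $st$-separator meeting each $P\in\mathcal P$ in exactly one internal vertex}. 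Fixing $A:=N^+_D(s)$ and deleting $s$, it is enough to find a family of disjoint $A$--$t$ dipaths together with an $A$--$t$ separator that is a transversal of this family; pre-pending the arc from $s$ then finishes the job (with a trivial side-check when $s\to t$ is an arc).

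For that core statement I would run the transfinite augmenting-walk construction, but organised through Aharoni's notion of a wave in order to control the limit. A \emph{wave} is a family $\mathcal W$ of pairwise disjoint dipaths, each starting in $A$, whose set $\mathrm{ter}(\mathcal W)$ of last vertices separates $A$ from $t$; the trivial one-vertex paths form a wave, and I would use that increasing chains of waves (ordered so that later waves push their fronts towards $t$) have a least upper bound, so that Zorn's lemma yields a maximal wave $\mathcal W$. A maximal wave is either \emph{unhindered}, meaning every vertex of $A$ starts a path of $\mathcal W$ --- and then, and this is where countability is spent, a back-and-forth exhaustion of the countably many vertices converts $\mathcal W$ into an honest linkage from $A$ onto all of $A$, whose terminal vertices $\mathrm{ter}(\mathcal W)$ form the required transversal separator --- or it is \emph{hindered}, in which case the region of $D$ strictly beyond $\mathrm{ter}(\mathcal W)$, together with the vertices of $A$ that $\mathcal W$ missed, is a strictly smaller instance on which one recurses (again countability is what makes this iteration terminate coherently), and the solutions of the blocks are glued along the separating set $\mathrm{ter}(\mathcal W)$, which is internally disjoint from everything used beyond it.

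Granting this bookkeeping, orthogonality is the easy part: within each block the separating set is by construction a transversal of the paths produced in that block, and the blocks meet only along these sets, so the union of them is an $st$-cut choosing exactly one edge from each path of the final system $\mathcal P$. The real obstacle is everything I compressed into ``chains of waves have upper bounds'' and ``back-and-forth exhaustion'': one must make the transfinite sequence of reroutings stabilise edge by edge, so that the limiting path-system and the limiting unreachable set are both meaningful and orthogonal to each other --- in a finite graph this is automatic because the process stops after finitely many steps, and in a countable graph Aharoni's wave calculus handles it, but for graphs of arbitrary cardinality this convergence problem \emph{is} the substance of the Aharoni--Berger theorem, whose proof needs a considerably heavier theory of waves than the sketch above; in the present paper I would simply invoke it.
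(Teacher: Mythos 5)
The paper does not prove Theorem~\ref{t: Inf Menger} at all: it is quoted as a black box from Aharoni and Berger \cite{aharoni2009menger} (the former Erd\H{o}s--Menger conjecture), and since your proposal ends by declaring that ``in the present paper I would simply invoke it,'' you and the author are in agreement about the only thing that matters here. One caveat on the intermediate sketch, in case you meant any of it to be load-bearing: the reductions (line-graph construction, bidirection) are fine and standard, but the two steps you compress --- that a maximal unhindered wave can be upgraded to a linkage of all of $A$, and that the transfinite block-gluing stabilises --- are not routine even in the countable case (the first requires Aharoni's lemma that an unhindered web stays unhindered after linking a single vertex, itself a delicate alternating-path argument), and in arbitrary cardinality they constitute essentially the entire content of \cite{aharoni2009menger}; so nothing in the sketch substitutes for the citation, which is exactly how the paper treats it.
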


We say that the $ st $-cut $ C $  in Theorem \ref{t: Inf Menger} is an \emph{Erdős-Menger} $ st $\emph{-cut} and we let $ 
\boldsymbol{\mathfrak{C}(s,t)} $ be the set of such cuts.

\begin{thm}[J. \cite{joo2019complete}]\label{l: EMlattice}
$ (\mathfrak{C}(s,t),\preceq) $ is a complete lattice, although usually not a sublattice of all the minimal $ st $-cuts.
\end{thm}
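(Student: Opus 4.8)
The plan is to establish that every subfamily of $\mathfrak{C}(s,t)$ has a supremum in $(\mathfrak{C}(s,t),\preceq)$; this is equivalent to $\mathfrak{C}(s,t)$ being a complete lattice, since in any poset in which all suprema exist the infimum of a set is the supremum of the set of its lower bounds, and the suprema of $\varnothing$ and of the whole poset supply the least and the greatest element. The case of the empty family --- the existence of a $\preceq$-least Erdős-Menger $st$-cut --- can be handled separately and more easily: it is the edge-boundary of a $\subseteq$-maximal ``wave'' from $s$ toward $t$, produced by a Zorn-type argument as in the proof of Theorem~\ref{t: Inf Menger}. So fix a nonempty $\mathcal{X}\subseteq\mathfrak{C}(s,t)$ and write $X_C$ for the $s$-side of $C\in\mathcal{X}$; since the upper bounds of $\mathcal{X}$ in $\mathfrak{C}(s,t)$ are exactly the Erdős-Menger cuts whose $s$-side contains $\bigcup_{C}X_C$, the task is to produce the $\preceq$-least such cut.

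Two reformulations organise the argument. First, for $s\in X$ and $t\notin X$ one has $\delta(X)\in\mathfrak{C}(s,t)$ if and only if inside $G[X]$ there is a family of edge-disjoint paths, each starting at $s$, each ending with a distinct edge of $\delta(X)$ and jointly exhausting $\delta(X)$ --- call this a saturating $s$-linkage of $\delta(X)$ in $G[X]$ --- and symmetrically there is a saturating $t$-linkage in $G[V\setminus X]$; one direction follows by cutting each path of an orthogonal system at its unique edge of $\delta(X)$ (the two pieces lie in $G[X]$ resp.\ $G[V\setminus X]$ precisely because the path meets the cut once), the other by splicing the two linkages through $\delta(X)$, and the same cutting argument shows, using that $G$ is connected, that every member of $\mathfrak{C}(s,t)$ is a $\subseteq$-minimal $st$-cut, so $\mathfrak{C}(s,t)$ lives inside the complete lattice of all $\subseteq$-minimal $st$-cuts. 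Second, a saturating $s$-linkage of $\delta(X)$ in $G[X]$ exists whenever the Menger-type condition $d_G(A)\ge d_G(X)$ holds for every $A$ with $s\in A\subseteq X$ (this is just a min-cut statement in $G[X]$ after turning the $X$-endpoints of $\delta(X)$ into a sink), where for cuts of infinite size the ``upgrade'' from this min-cut condition to an actual saturating linkage is exactly where Theorem~\ref{t: Inf Menger} rather than finite Menger is invoked. A short submodularity computation together with a directed-limit argument shows that the $s$-side min-cut condition for $X^{\ast}:=\bigcup_{C}X_C$ follows from the corresponding conditions for the $X_C$'s.

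With this in hand I would build the supremum by a transfinite recursion through the complete lattice of $\subseteq$-minimal $st$-cuts: starting from $\bigcup_{C}X_C$, repeatedly close up to a $\subseteq$-minimal cut and test whether its $t$-side also admits a saturating linkage; if it does (and the infinite-size upgrade goes through) one has an Erdős-Menger cut, and minimality is preserved along the way so that it is the supremum, while if the $t$-side fails one enlarges the $s$-side minimally --- again using Theorem~\ref{t: Inf Menger} in the auxiliary graph obtained by contracting the current $s$-side and the complementary side --- and continues. Throughout, the orthogonal systems of the members of $\mathcal{X}$ act as certificates that keep the recursion alive; the point of the hypothesis is that each $C\in\mathcal{X}$ is an Erdős-Menger cut and not merely an $st$-cut, which is exactly what prevents an infinite ``dead end'' from forming on the $s$-side.

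The step I expect to be the main obstacle is the limit stages of this recursion --- one must make sense of the limit of the, in general non-nested, path-systems produced along the way, verify that the limit is still a family of edge-disjoint $st$-paths orthogonal to the limit cut with no edge of that cut ``lost'', and verify that the limit cut is still $\subseteq$-minimal --- together with the upgrade from a min-cut condition to a genuine saturating linkage for cuts of infinite size; these are precisely the difficulties highlighted in the introduction, where an infinite sequence of augmentations may fail to converge. Finally, the parenthetical assertion is witnessed by a small explicit example: two Erdős-Menger cuts whose join computed in the ambient lattice of $\subseteq$-minimal cuts absorbs an infinite one-sided region into its $s$-side and thereby ceases to be Erdős-Menger, so that the supremum produced above is a proper $\preceq$-predecessor of that ambient join.
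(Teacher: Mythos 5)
Your overall target is right (closure of $\mathfrak{C}(s,t)$ under suprema, least element handled separately), and your first reformulation --- $\delta(X)\in\mathfrak{C}(s,t)$ iff both $G[X]$ and $G[V\setminus X]$ admit saturating linkages of $\delta(X)$ from $s$ resp.\ $t$ --- is correct. But the second reformulation, on which the whole construction rests, is false for infinite cuts: the cardinality condition ``$d_G(A)\ge d_G(X)$ for every $A$ with $s\in A\subseteq X$'' does \emph{not} imply the existence of a saturating $s$-linkage of $\delta(X)$ in $G[X]$, and Theorem~\ref{t: Inf Menger} supplies no such upgrade (it asserts the existence of \emph{some} orthogonal pair, not that cardinal min-cut bounds can be converted into a linkage with prescribed last edges). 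Concretely, let $G[X]$ have vertices $s$, $m_1,m_2,\dots$ and $x_0,x_1,x_2,\dots$, with single edges $sm_j$, $m_jx_0$ and $m_jx_j$ for each $j\ge 1$, and let each $x_i$ carry exactly one edge of $\delta(X)$. Every $A$ with $s\in A\subseteq X$ has $d_G(A)\ge\aleph_0=d_G(X)$ (if infinitely many $m_j$ are missing, the edges $sm_j$ witness this; otherwise either the edges $m_jx_0$, or the edges $m_jx_j$ to missing $x_j$, or the crossing edges at the present $x_j$ do). Yet no edge-disjoint family of paths from $s$ has all of $\delta(X)$ as its set of last edges: since $x_j$ ($j\ge1$) has the unique neighbour $m_j$ in $G[X]$, the path ending at $x_0$ must be $s\,m_j\,x_0$ for some $j$, and then $m_j$ can no longer be reached edge-disjointly, so the crossing edge at $x_j$ is lost. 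This is precisely the phenomenon (``cardinality is an overly rough measure'') that the structural notion of Erdős--Menger cut is meant to circumvent, so your ``submodularity plus directed-limit'' step for $X^{\ast}=\bigcup_C X_C$ proves a condition that carries no linkage information, and the route through it cannot work as stated.

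The second load-bearing step is also missing rather than merely hard: you yourself defer the limit stages of the transfinite recursion --- convergence of the non-nested path-systems, preservation of orthogonality and of $\subseteq$-minimality of the limit cut --- as ``the main obstacle'' without offering a mechanism, so the proposal is a plan, not a proof; this is exactly where the real work lies, and in the source \cite{joo2019complete} (which this paper cites instead of reproving the statement) it is done with the wave machinery recalled here in Lemma~\ref{l: large wave} and with linking results in the spirit of Theorem~\ref{t: Pym}, not with cardinality comparisons. A smaller slip: a maximal wave from $s$ toward $t$ yields the $\preceq$-\emph{largest} Erdős--Menger cut (its cut is the top of $\mathfrak{C}^{-}(s,t)$, which lies in $\mathfrak{C}(s,t)$ by Lemma~\ref{l: large wave} combined with Corollary~\ref{cor: delta(s) coverable}); the $\preceq$-least element comes from the symmetric construction on the $t$-side, i.e.\ from $\mathfrak{C}^{+}(s,t)$.
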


Finally we introduce two more classes $ \mathfrak{C}^{-}(s,t) $ and $ \mathfrak{C}^{+}(s,t) $ of minimal $ st $-cuts with  
$  \mathfrak{C}^{-}(s,t)\cap \mathfrak{C}^{+}(s,t)=\mathfrak{C}(s,t) $ and
$ \mathfrak{C}^{+}(s,t):=\mathfrak{C}^{-}(t,s) $. Let  $ \boldsymbol{\mathfrak{C}^{-}(s,t) }$ consist of those minimal $ st 
$-cuts  $ C $ for which there  is a system $\mathcal{W}$ of  pairwise edge-disjoint 
paths starting at $ s $ and having $ C $ as the set of last edges  (considering the paths directed 
away from $s$). Such a $ 
\mathcal{W} $ is called an $ st $-\emph{wave} and plays an important role in the proof of Theorem \ref{t: Inf Menger}. The 
cut defined as the last edges of a paths-system $ \mathcal{W} $ is denoted by $ \boldsymbol{C_\mathcal{W}} $. If  $ \delta(s) $ 
is   
a wave (considering the edges as paths of length one), than we call it the \emph{trivial} $ st 
$\emph{-wave}.

\begin{lem}[{\cite[Lemma 3.8]{joo2019complete}}]\label{l: large wave}
 $ (\mathfrak{C}^{-}(s,t), \preceq) $  is a complete lattice and a sup-sublattice of all the minimal $ st $-cuts.  After the 
contraction of the 
$ s $-side of its largest element to $ s $, there is exactly one wave in the resulting system, namely the trivial one.
\end{lem}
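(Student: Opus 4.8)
The plan is to isolate one closure property and derive everything from it. \textbf{Engine.} For every $ \mathcal{A}\subseteq\mathfrak{C}^{-}(s,t) $, the supremum of $ \mathcal{A} $ taken in the complete lattice $ \mathcal{L} $ of all $ \subseteq $-minimal $ st $-cuts again lies in $ \mathfrak{C}^{-}(s,t) $ (for $ \mathcal{A}=\varnothing $ this just says that the $ \preceq $-least minimal $ st $-cut is a wave cut, which is clear: all of its edges are incident with $ s $, so the corresponding one-edge paths form a wave). Once the Engine is available the first two assertions are formal: a subfamily of a complete lattice that is closed under arbitrary suprema is itself a complete lattice in the induced order, with the same suprema and with $ \inf\mathcal{A} $ realised as the supremum of those elements of $ \mathfrak{C}^{-}(s,t) $ that are $ \preceq $-below every member of $ \mathcal{A} $; and ``the suprema are inherited from $ \mathcal{L} $'' is precisely the statement that $ \mathfrak{C}^{-}(s,t) $ is a sup-sublattice of $ \mathcal{L} $ --- it is not an inf-sublattice in general, because the $ s $-side of the $ \mathcal{L} $-infimum of two wave cuts need not support a wave. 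In particular $ \mathfrak{C}^{-}(s,t) $ has a largest element; I denote its $ s $-side by $ X_{\max} $ and fix a wave $ \mathcal{W}_{\max} $ with $ C_{\mathcal{W}_{\max}}=\delta_G(X_{\max}) $.

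To prove the Engine, write $ \mathcal{A}=\{\delta_G(X_i):i\in I\} $ and fix witnessing waves $ \mathcal{W}_i $. First I would identify the $ \mathcal{L} $-supremum: it is $ \delta_G(X) $, where $ X $ is obtained from $ \bigcup_i X_i $ by adjoining every connected component of $ G[V(G)\setminus\bigcup_i X_i] $ that avoids $ t $; a short check shows both sides of $ \delta_G(X) $ are then connected, and that this ``tidying'' operation is monotone. The real content is to build an $ st $-wave whose last-edge set is exactly $ \delta_G(X) $, which I would do by transfinite recursion along a well-ordering of $ I $: at a successor step merge in $ \mathcal{W}_\alpha $, at a limit step pass to a limit wave. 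The successor step rests on a \emph{two-wave merging lemma}: from waves with last-edge sets $ \delta_G(X_1),\delta_G(X_2) $ one constructs a wave with last-edge set $ \delta_G(X_1)\vee\delta_G(X_2) $, roughly by keeping $ \mathcal{W}_1 $ and, each time one of its paths leaves $ X_1 $ into $ X_2 $, continuing along $ \mathcal{W}_2 $, while rerouting both systems along alternating trails so as to preserve edge-disjointness and to make every edge of $ \delta_G(X_1)\vee\delta_G(X_2) $ the last edge of exactly one path. \textbf{This merging lemma, together with the fact that an increasing chain of waves has an upper bound which is again a wave (so that the recursion survives limit stages without forcing a path to acquire infinitely many edges), is the step I expect to be the main obstacle}; it is where one has to reuse the wave-manipulation techniques underlying the proof of Theorem~\ref{t: Inf Menger}. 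An essentially equivalent route is to apply Zorn's lemma to the set of waves $ \mathcal{W} $ with $ C_\mathcal{W}\preceq\delta_G(X) $, ordered by $ \mathcal{W}\sqsubseteq\mathcal{W}' $ iff every path of $ \mathcal{W} $ is an initial segment of a path of $ \mathcal{W}' $: a $ \sqsubseteq $-maximal such $ \mathcal{W} $ must satisfy $ C_\mathcal{W}=\delta_G(X) $, since otherwise some $ X_i $ is not contained in the $ s $-side of $ C_\mathcal{W} $, and merging $ \mathcal{W} $ with $ \mathcal{W}_i $ produces a strictly larger wave still $ \preceq\delta_G(X) $ by monotonicity of tidying, contradicting maximality. (The same hidden difficulty, upper bounds for chains of waves, resurfaces here.)

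Finally I would prove the last assertion. Let $ G' $ arise from $ G $ by contracting $ X_{\max} $ to $ s $. As $ G $ is connected and $ t\notin X_{\max} $, the graph $ G' $ is connected, $ s\neq t $ in $ G' $, $ \delta_{G'}(s)\neq\varnothing $, and $ G'[V(G')\setminus\{s\}]=G[V(G)\setminus X_{\max}] $ is connected, so $ \delta_{G'}(s) $ is a $ \subseteq $-minimal $ st $-cut of $ G' $ and the trivial wave really is an $ st $-wave in $ G' $. For uniqueness, take any $ st $-wave $ \mathcal{W}' $ in $ G' $ and write $ C_{\mathcal{W}'}=\delta_{G'}(Y) $ with $ s\in Y $. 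Since $ \delta_{G'}(s)=\delta_G(X_{\max})=C_{\mathcal{W}_{\max}} $ and the paths of $ \mathcal{W}' $ are edge-disjoint, the first edge of each $ W'\in\mathcal{W}' $ lies in $ \delta_G(X_{\max}) $, hence is the last edge of a unique $ P_{W'}\in\mathcal{W}_{\max} $, and distinct $ W' $ give distinct $ P_{W'} $. Now replace in each $ W' $ its first edge $ e $ by the whole path $ P_{W'} $ (which also ends with $ e $ and uses no other edge of $ \delta_G(X_{\max}) $). The resulting paths are pairwise edge-disjoint $ s $-paths in $ G $ --- their initial segments $ P_{W'} $ are distinct members of $ \mathcal{W}_{\max} $ and lie in $ G[X_{\max}]\cup\delta_G(X_{\max}) $, while the portions inherited from $ \mathcal{W}' $ after the first edge lie in $ G[V(G)\setminus X_{\max}] $ and are edge-disjoint because $ \mathcal{W}' $ is --- with the same last-edge set as $ \mathcal{W}' $. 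Setting $ Y':=(Y\setminus\{s\})\cup X_{\max} $, this last-edge set is exactly $ \delta_G(Y') $, which is a $ \subseteq $-minimal $ st $-cut of $ G $ because $ G[Y'] $ is connected ($ G[X_{\max}] $ and $ G'[Y] $ are) and $ G[V(G)\setminus Y']=G'[V(G')\setminus Y] $ is connected. Hence the modified system witnesses $ \delta_G(Y')\in\mathfrak{C}^{-}(s,t) $, so $ \delta_G(Y')\preceq\delta_G(X_{\max}) $, i.e. $ Y'\subseteq X_{\max} $; but $ Y'\supseteq X_{\max} $, so $ Y=\{s\} $ and $ C_{\mathcal{W}'}=\delta_{G'}(s) $. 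Finally, a wave whose last-edge set is $ \delta_{G'}(s) $ must consist of single edges incident with $ s $ (a path from $ s $ uses at most one edge incident with $ s $, necessarily its first), hence equals the trivial wave, as claimed.
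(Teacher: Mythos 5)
This lemma is not proved in the paper at all --- it is imported verbatim from \cite{joo2019complete}, so there is no internal proof to compare against; your proposal has to be judged on its own. Its overall architecture is sensible and matches how one would expect the result to be organised: reduce everything to the single closure property that $\mathfrak{C}^{-}(s,t)$ is closed under arbitrary suprema taken in the lattice $\mathcal{L}$ of all $\subseteq$-minimal $st$-cuts, derive the complete-lattice and sup-sublattice statements formally, and prove the uniqueness of the wave after contraction separately. Your identification of the $\mathcal{L}$-supremum (union of the $s$-sides plus the components avoiding $t$) is correct, and your proof of the third assertion is complete and, as far as I can check, correct: the observation that a wave path uses exactly one edge of its cut (else that edge could not be the last edge of any other path, by edge-disjointness) justifies the concatenation of $\mathcal{W}'$ with $\mathcal{W}_{\max}$, and the maximality of $X_{\max}$ then forces $Y=\{s\}$ and hence triviality.

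The genuine gap is the Engine itself. The two statements you rest it on --- that the join of two wave cuts is again a wave cut, and that an increasing chain of waves has an upper bound which is again a wave --- are exactly where all the mathematical content of the first sentence of Lemma~\ref{l: large wave} lives, and you prove neither. The sketch of the merging step (``keep $\mathcal{W}_1$ and continue along $\mathcal{W}_2$, rerouting both systems along alternating trails so as to preserve edge-disjointness'') is not an argument: it does not specify the trails, does not show the rerouting terminates or yields paths rather than walks, and does not show that every edge of $\delta_G(X_1)\vee\delta_G(X_2)$ ends up as the last edge of exactly one path. Likewise, ``pass to a limit wave'' at limit stages is precisely the delicate point in all Aharoni--Berger-type arguments (a naive union of a chain of waves can force a single path to grow through infinitely many edges, and one must say which paths survive and why their last edges still form a minimal cut). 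The Zorn's-lemma reformulation you offer does not avoid this: as you note yourself, the same chain-limit problem reappears as the upper-bound condition. So what you have is a correct reduction of the lemma to the two hard sublemmas of \cite{joo2019complete}, plus a full proof of the final assertion, but not a proof of the lemma.
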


We call an $ st $-wave $ \mathcal{W} $ \emph{large} if $ C_{\mathcal{W}} $ is the largest element of $ \mathfrak{C}^{-}(s,t) 
$. Note that if there is no non-trivial  $ st $-wave, then $ \delta(s) $ must be an Erdős-Menger $ 
st $-cut because $\mathfrak{C}(s,t)\subseteq \mathfrak{C}^{-}(s,t)\subseteq \{ \delta(s) \} $ and the left side is nonempty by 
Theorem \cite{aharoni2009menger}. This leads to the following conclusion:

\begin{cor}\label{cor: delta(s) coverable}
If there is no non-trivial $ st $-wave, then there is a system $ \mathcal{P} $ of edge-disjoint $ st $-paths covering $ \delta(s) $ 
and hence $ \mathfrak{C}=\{ \delta(s) \} $.
\end{cor}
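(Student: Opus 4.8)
The statement essentially repackages the remark preceding it, so the plan is short. First I would recall that the trivial $st$-wave $\delta(s)$, viewed as a family of length-one paths emanating from $s$, always belongs to $\mathfrak{C}^{-}(s,t)$; hence the hypothesis that there is no non-trivial $st$-wave means precisely that $\mathfrak{C}^{-}(s,t)=\{\delta(s)\}$, so by Lemma \ref{l: large wave} the cut $\delta(s)$ is the largest (indeed the only) element of $\mathfrak{C}^{-}(s,t)$. Combining $\mathfrak{C}(s,t)\subseteq\mathfrak{C}^{-}(s,t)$ with the non-emptiness of $\mathfrak{C}(s,t)$ guaranteed by Theorem \ref{t: Inf Menger}, I obtain $\mathfrak{C}(s,t)=\{\delta(s)\}$, which is already the ``hence'' clause of the corollary.

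It remains to exhibit the path-system covering $\delta(s)$. Here I would apply Theorem \ref{t: Inf Menger} once more to get a system $\mathcal{P}$ of edge-disjoint $st$-paths together with an orthogonal $st$-cut $C$; by definition $C$ is an Erdős--Menger cut, so $C\in\mathfrak{C}(s,t)=\{\delta(s)\}$ and therefore $C=\delta(s)$. The key (and essentially only) observation is that any $st$-path $P$ visits $s$ exactly once, namely as its first vertex, so $P$ meets $\delta(s)$ in exactly one edge, its first edge. Consequently the unique edge of $P$ lying in the orthogonal cut $C=\delta(s)$ must be the first edge of $P$, and since the paths in $\mathcal{P}$ are edge-disjoint, the map sending each $P\in\mathcal{P}$ to its first edge is an injection whose image is exactly $C=\delta(s)$. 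Thus every edge incident with $s$ is the first edge of some path in $\mathcal{P}$, i.e.\ $\mathcal{P}$ covers $\delta(s)$.

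As for difficulty, there is essentially no obstacle: the only point needing a moment's care is verifying that orthogonality of the cut $\delta(s)$ to $\mathcal{P}$ is equivalent to $\mathcal{P}$ covering $\delta(s)$, which reduces to the trivial remark that an $st$-path meets $\delta(s)$ only in its initial edge. Everything else is bookkeeping with the lattices $\mathfrak{C}^{-}(s,t)$ and $\mathfrak{C}(s,t)$ already set up in the preliminaries, together with one invocation of the Aharoni--Berger theorem.
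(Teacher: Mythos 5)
Your argument is correct and is essentially the same as the paper's, which derives the corollary from the remark immediately preceding it: $\mathfrak{C}(s,t)\subseteq\mathfrak{C}^{-}(s,t)\subseteq\{\delta(s)\}$ with the left side nonempty by Theorem~\ref{t: Inf Menger}, so $\delta(s)$ is an Erd\H{o}s--Menger cut and orthogonality to $\delta(s)$ amounts to covering it, since an $st$-path meets $\delta(s)$ only in its first edge. No further comment is needed.
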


\begin{thm}[Diestel and Thomassen, \cite{diestel2006cantor}]\label{t: Pym}
Assume that $ G $ is a (possibly infinite)  graph, $ s,t\in V(G) $ are distinct, furthermore, $ \mathcal{P} $ and  $ \mathcal{Q} $ 
are 
systems of edge-disjoint $ st $-paths. Then there exists a system $ \mathcal{R} $ of edge-disjoint $ st $-paths such that 
$ \delta_\mathcal{R}(s) \supseteq  \delta_\mathcal{P}(s) $ and $ \delta_\mathcal{R}(t) \supseteq  \delta_\mathcal{Q}(t) $.
\end{thm}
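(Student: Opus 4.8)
The plan is to first reduce to the case $G=\bigcup\mathcal{P}\cup\bigcup\mathcal{Q}$ (delete every other edge: this only shrinks what is available, and a family $\mathcal{R}$ produced inside the smaller graph serves in $G$), and to $G$ connected. In a \emph{finite} graph the statement is then routine: give every edge unit capacity; $\mathcal{P}$ witnesses that one can send $|\mathcal{P}|$ units out of $s$ using exactly the edges of $\delta_{\mathcal{P}}(s)$, and $\mathcal{Q}$ that one can send $|\mathcal{Q}|$ units into $t$ using exactly $\delta_{\mathcal{Q}}(t)$; a standard uncrossing/flow-exchange merges these two into a single integral $st$-flow that respects both local prescriptions, and decomposing it into $st$-paths (discarding circulating cycles) gives $\mathcal{R}$. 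Equivalently one can superpose $\mathcal{P}$ and $\mathcal{Q}$ into a multidigraph in which $s$ is a source, $t$ a sink and every other vertex balanced, decompose it into directed $s\to t$ paths plus cycles, keep the paths, and resolve the edges used by both families — which is harmless because only $\delta_{\mathcal{P}}(s)$ at $s$ and $\delta_{\mathcal{Q}}(t)$ at $t$ must be covered, not everything. So all the content is in the infinite case.

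The obstruction, and the reason the statement deserves the name ``Cantor--Bernstein theorem for paths'', is this: although every path in $\mathcal{P}$ and in $\mathcal{Q}$ is finite, $G=\bigcup\mathcal{P}\cup\bigcup\mathcal{Q}$ can contain rays, since a ray is forced to run through infinitely many of the given paths, alternating between $\mathcal{P}$-segments and $\mathcal{Q}$-segments. A path of $\mathcal{R}$ grown greedily by following $\mathcal{P}$- and $\mathcal{Q}$-segments and switching at shared vertices may therefore ``run off to infinity'' along such a ray instead of arriving at $t$, and members of $\mathcal{R}$ are required to be finite paths. The remedy is a Cantor--Bernstein-style orbit analysis: classify the maximal alternating trails according to whether, traced forward, they terminate (necessarily at $t$) or run on forever, and dually traced backwards towards $s$; route along $\mathcal{P}$ near $s$ on the first type and along $\mathcal{Q}$ near $t$ on the second, and reroute any two-way infinite alternating trail off entirely — permissible precisely because only the ``ends'' $\delta_{\mathcal{P}}(s)$ and $\delta_{\mathcal{Q}}(t)$ matter. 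Concretely I would carry this out as a transfinite recursion along fixed enumerations of $\delta_{\mathcal{P}}(s)$ and of $\delta_{\mathcal{Q}}(t)$: at each step commit, irrevocably and within the still-unused edges of $\bigcup\mathcal{P}\cup\bigcup\mathcal{Q}$, a single \emph{finite} $st$-path whose first edge is the next unprocessed edge of $\delta_{\mathcal{P}}(s)$ (or whose last edge is the next unprocessed edge of $\delta_{\mathcal{Q}}(t)$), obtained by following the relevant given path and ``merging'' into an already committed path at the first conflict — the merges being exactly the orbit steps of the bookkeeping.

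The step I expect to be the main obstacle is making this recursion truly terminate each path after finitely many merges while still guaranteeing, in the limit, that no edge of $\delta_{\mathcal{P}}(s)$ or of $\delta_{\mathcal{Q}}(t)$ is left uncovered and that global edge-disjointness is preserved throughout. A naive iterated-augmentation or ``limit-object'' approach may simply fail to converge, and one cannot retreat to compactness: as with the other infinite problems mentioned in the Introduction, ``the conclusion holds in every finite subgraph'' does not imply ``the conclusion holds''. So the delicate work is the discipline that keeps every commitment finite and final, and making it cooperate with the structure of the part of $G$ still free — if one wants, certifying via a residual application of Corollary~\ref{cor: delta(s) coverable} that a finite extension into the required side always exists — is the technical heart of the argument.
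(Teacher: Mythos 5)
First, a point of comparison: the paper does not prove this statement at all — it is quoted verbatim from Diestel and Thomassen \cite{diestel2006cantor} (the ``Cantor--Bernstein theorem for paths'' you allude to), so there is no in-paper proof to measure you against. Judged on its own, what you have written is a plan rather than a proof, and the plan has a genuine gap at exactly the point you flag yourself: the termination and completeness of the recursion are asserted as ``the technical heart'' and then not carried out.

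Concretely, two things are missing. First, the existence, at each stage of your transfinite recursion, of a \emph{finite} $st$-path inside the still-unused edges whose first edge is the next prescribed edge of $\delta_{\mathcal{P}}(s)$ is nowhere established. After earlier irrevocable commitments the residual graph no longer contains $\mathcal{P}$ or $\mathcal{Q}$ as path systems, so neither certifies anything about it, and Corollary~\ref{cor: delta(s) coverable} does not apply because you have no control over waves in the residual graph; a greedy scheme without an invariant preserved by every commitment simply need not converge. Second, the Cantor--Bernstein orbit picture is not well defined in this setting. In the set-theoretic argument every element has at most one image and one preimage, so the orbits partition the ground set; here a single path of $\mathcal{P}$ can meet many paths of $\mathcal{Q}$ and vice versa, so ``the maximal alternating trail through a given edge'' is not unique, ``merge at the first conflict'' can route two different constructed paths into the same segment (destroying edge-disjointness), and a merge can create a new first conflict upstream of an earlier one, so even your classification into terminating and non-terminating trails depends on unspecified choices. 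The actual Diestel--Thomassen proof spends essentially all its effort on pinning down such an invariant; you have correctly identified the obstruction (rays arising as infinite alternations of $\mathcal{P}$- and $\mathcal{Q}$-segments) but not overcome it. As a minor additional remark, your finite-case superposition argument leaves the edges used by both families unresolved — the two copies of such an edge can land on two distinct paths of the decomposition — though that part is repairable and is indeed classical (Pym's linkage theorem).
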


Let $ \mathcal{P} $ be a system of edge-disjoint $ st $-paths and let $ \mathcal{W} $ be  a large $ st $-wave. By 
contracting the $ t $-side of $ C_{\mathcal{W}} $ to  $ t $ and  applying Theorem \ref{t: Pym} with the $ st $-paths  
obtained 
from $ \mathcal{W} $ and from the initial segments of
 the paths in $ \mathcal{P} $ we conclude:

\begin{cor}\label{cor: Pym cor}
Let $ \mathcal{P} $ be a system of edge-disjoint $ st $-paths. Then there is a large $ st $-wave $ 
\mathcal{W} $ with $ \delta_{\mathcal{W}}(s)\supseteq \delta_{\mathcal{P}}(s)$.
\end{cor}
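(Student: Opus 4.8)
The plan is to deduce this from Theorem~\ref{t: Pym} after a single contraction, following the recipe indicated just above the statement. First, using Lemma~\ref{l: large wave}, fix a large $st$-wave $\mathcal{W}_0$ and write $C:=C_{\mathcal{W}_0}$ for its cut, i.e. the largest element of $\mathfrak{C}^{-}(s,t)$; let $X$ be the $s$-side of $C$ and $Y:=V\setminus X$ the $t$-side, so $s\in X$, $t\in Y$ and $C=\delta(X)$. Let $G'$ be the graph obtained from $G$ by contracting $Y$ to the single vertex $t$ (deleting the resulting loops). Since every edge inside $Y$ becomes a loop and nothing on the $s$-side is affected, $\delta_{G'}(t)=C$ and $\delta_{G'}(s)=\delta_G(s)$.

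Next I would produce the two path-systems to feed into Theorem~\ref{t: Pym}, both living in $G'$. On the one hand, each path of $\mathcal{W}_0$ runs from $s$ inside $X$ and ends with an edge of $C$; after contraction that terminal edge lands at $t$, so $\mathcal{W}_0$ becomes a system $\mathcal{A}$ of edge-disjoint $st$-paths in $G'$ with $\delta_{\mathcal{A}}(t)=C$. On the other hand, for each $P\in\mathcal{P}$ let $P'$ be the initial segment of $P$ up to and including the first edge at which $P$ leaves $X$ (which exists because $P$ terminates at $t\in Y$, and which lies in $C$); then the $P'$ are pairwise edge-disjoint, each $P'$ is an $st$-path of $G'$, and since $s\in X$ the first edge of $P$ survives in $P'$, so the system $\mathcal{B}:=\{P':P\in\mathcal{P}\}$ satisfies $\delta_{\mathcal{B}}(s)=\delta_{\mathcal{P}}(s)$.

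Applying Theorem~\ref{t: Pym} in $G'$ to the pair $(\mathcal{B},\mathcal{A})$ yields a system $\mathcal{R}$ of edge-disjoint $st$-paths in $G'$ with $\delta_{\mathcal{R}}(s)\supseteq\delta_{\mathcal{B}}(s)=\delta_{\mathcal{P}}(s)$ and $\delta_{\mathcal{R}}(t)\supseteq\delta_{\mathcal{A}}(t)=C$; since also $\delta_{\mathcal{R}}(t)\subseteq\delta_{G'}(t)=C$, in fact $\delta_{\mathcal{R}}(t)=C$, so the last edges of the paths of $\mathcal{R}$ are exactly the edges of $C$. Reading $\mathcal{R}$ back in $G$ (un-contracting $t$), each $R\in\mathcal{R}$ becomes a path starting at $s$, staying in $X$ until its last edge, whose last edge lies in $C$; these remain pairwise edge-disjoint and their last edges form $C$, which is a minimal $st$-cut. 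Hence $\mathcal{W}:=\mathcal{R}$, viewed in $G$, is an $st$-wave with $C_{\mathcal{W}}=C$, i.e. a large $st$-wave, and $\delta_{\mathcal{W}}(s)=\delta_{\mathcal{R}}(s)\supseteq\delta_{\mathcal{P}}(s)$. I do not expect a genuine obstacle here: the entire weight of the argument rests on Theorem~\ref{t: Pym} and Lemma~\ref{l: large wave}, and what remains is the routine bookkeeping of the contraction — verifying that truncating each $P$ at its first crossing of $C$ preserves $\delta_{\cdot}(s)$ and gives honest $st$-paths of $G'$, that $\delta_{G'}(t)=C$, and that the pulled-back $\mathcal{R}$ is a wave rather than just a disjoint path-system.
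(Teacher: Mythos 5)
Your proof is correct and follows exactly the route the paper sketches in the sentence preceding the corollary: fix a large $st$-wave, contract the $t$-side of its cut $C$ to $t$, and apply Theorem~\ref{t: Pym} to the truncated paths of $\mathcal{P}$ and the paths of the wave, then read the result back as a wave with cut $C$. The only difference is that you carry out explicitly the bookkeeping (truncation at the first edge of $C$, the identification $\delta_{G'}(t)=C$, and the pull-back) that the paper leaves implicit.
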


Finally, we will make use of the following classical lemma (see Lemma 3.3.2 and 3.3.3 in \cite{diestel2017graph}):
\begin{lem}[Augmenting path lemma]\label{l: aug path}
Assume that $ G $ is a (possibly infinite)  graph, $ s,t\in V(G) $ are distinct and $ \mathcal{P} $ is a system of edge-disjoint $ 
st $-paths in $ G $. Then either there exists an $ st $-cut $ C $ orthogonal to $ 
\mathcal{P} $ (i.e. $ \mathcal{P} $ is as in Theorem \ref{t: Inf Menger}) or there is another system $ \mathcal{Q} $ of 
edge-disjoint $ st $-paths for which 
$ \delta_\mathcal{Q}(s) \supset   \delta_\mathcal{P}(s) $ with $ \left|\delta_\mathcal{Q}(s) \setminus   \delta_\mathcal{P}(s) 
\right|=1$   and $ \delta_\mathcal{Q}(t) 
\supset  \delta_\mathcal{P}(t) $ with $ \left|\delta_\mathcal{Q}(t) \setminus   \delta_\mathcal{P}(t) 
\right|=1$.
\end{lem}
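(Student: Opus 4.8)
The plan is to run the classical augmenting-path proof of Menger's theorem and to verify that both of its halves survive the passage to infinite graphs. The first step is to form the auxiliary (``residual'') digraph $D$ on the vertex set $V(G)$: for every edge $e=\{u,v\}$ not used by $\mathcal{P}$ we put in both arcs $(u,v)$ and $(v,u)$, and for every edge $e=\{u,v\}$ used by the unique path of $\mathcal{P}$ running through it, say in the direction from $u$ to $v$, we put in only the reverse arc $(v,u)$. Let $X$ be the set of vertices reachable from $s$ by a directed walk in $D$. The whole point of this reformulation is that reachability is a finitary notion, so the possibly infinite sizes of $\mathcal{P}$, of $X$ and of the cuts in play never enter the argument.

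If $t\notin X$, I would show that $C:=\delta_G(X)$ is an $st$-cut orthogonal to $\mathcal{P}$ in the sense of Theorem~\ref{t: Inf Menger}. First, every edge $e=\{x,y\}$ with $x\in X$ and $y\notin X$ is used by a member of $\mathcal{P}$ and traversed by it from $x$ to $y$: otherwise the arc $(x,y)$ lies in $D$ --- either as a fresh arc if $e$ is unused, or as the residual arc of a traversal from $y$ to $x$ --- forcing $y\in X$. Second, since $s\in X$ and $t\notin X$, every $P\in\mathcal{P}$ crosses $C$, and it does so exactly once, for if $P$ returned from $V(G)\setminus X$ into $X$ along an edge $\{u_{j},u_{j+1}\}$ with $u_{j}\notin X$ and $u_{j+1}\in X$, then the residual arc $(u_{j+1},u_{j})$ of that traversal would force $u_{j}\in X$. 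Hence sending each $P$ to the unique edge of $P$ in $C$ is a well-defined map $\mathcal{P}\to C$ that is injective (the members of $\mathcal{P}$ are edge-disjoint) and onto $C$ (every edge of $C$ lies on some path), so $C$ is orthogonal to $\mathcal{P}$ and the first alternative holds.

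If $t\in X$, I would choose a \emph{shortest} directed $st$-path $W$ in $D$; being shortest it repeats no vertex, so the $G$-edges underlying its arcs are distinct, and I would split them into the \emph{fresh} edges $E^{+}$ (unused by $\mathcal{P}$) and the \emph{residual} edges $E^{-}$ (used by $\mathcal{P}$ and traversed by their path against the direction of $W$). Since no member of $\mathcal{P}$ traverses an edge into $s$ or out of $t$, there is no residual arc leaving $s$ and none entering $t$, so the first and last arcs of $W$ are fresh, via edges $e_{1}\in\delta_G(s)\setminus\delta_{\mathcal{P}}(s)$ and $e_{k}\in\delta_G(t)\setminus\delta_{\mathcal{P}}(t)$. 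Then I would \emph{localise}: $E^{-}$ is finite, hence met by only finitely many members of $\mathcal{P}$; let $\mathcal{P}_{0}$ be that finite subfamily and $G_{0}$ the finite subgraph of $G$ with edge set $E^{+}\cup\bigcup\mathcal{P}_{0}$. In $G_{0}$ the walk $W$ is an augmenting path for the finite flow $\mathcal{P}_{0}$, so the textbook finite flow-augmentation step decomposes $\mathcal{P}_{0}\triangle W$ into a family $\mathcal{Q}_{0}$ of $|\mathcal{P}_{0}|+1$ edge-disjoint $st$-paths in $G_{0}$ with $\delta_{\mathcal{Q}_{0}}(s)=\delta_{\mathcal{P}_{0}}(s)\cup\{e_{1}\}$ and $\delta_{\mathcal{Q}_{0}}(t)=\delta_{\mathcal{P}_{0}}(t)\cup\{e_{k}\}$ (here one uses that $W$ meets each of $s$ and $t$ exactly once). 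Finally $\mathcal{Q}:=\mathcal{Q}_{0}\cup(\mathcal{P}\setminus\mathcal{P}_{0})$ works: it is edge-disjoint because $\bigcup\mathcal{Q}_{0}\subseteq(\bigcup\mathcal{P}_{0}\setminus E^{-})\cup E^{+}$ is disjoint from $\bigcup(\mathcal{P}\setminus\mathcal{P}_{0})$, and $\delta_{\mathcal{Q}}(s)=\delta_{\mathcal{P}}(s)\cup\{e_{1}\}$, $\delta_{\mathcal{Q}}(t)=\delta_{\mathcal{P}}(t)\cup\{e_{k}\}$ with $e_{1}\notin\delta_{\mathcal{P}}(s)$ and $e_{k}\notin\delta_{\mathcal{P}}(t)$ --- the second alternative.

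None of this is deep: in the first case reachability does all the work, and in the second case the surgery is confined to the finite graph $G_{0}$, so no ``limit of infinitely many augmentations'' is needed and the finite theory applies verbatim. The two points that do demand care are excluding a path that re-enters $X$ in the first case --- which the residual-arc observation handles --- and, in the second case, the bookkeeping that $\bigcup\mathcal{Q}_{0}$ stays disjoint from $\bigcup(\mathcal{P}\setminus\mathcal{P}_{0})$ and that exactly one new edge appears at each of $s$ and $t$. I expect this accounting, rather than any genuine new idea, to be the fiddliest part.
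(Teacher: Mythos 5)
Your proof is correct and is essentially the classical alternating-walk/residual-digraph argument behind Lemmas 3.3.2 and 3.3.3 of Diestel's book, which is exactly what the paper relies on (it cites those lemmas rather than proving the statement itself). The two points that need care in the infinite setting --- that in the reachability case every $P\in\mathcal{P}$ crosses $\delta(X)$ exactly once, and that the augmentation is confined to the finite subgraph spanned by the walk together with the finitely many paths it meets --- are both handled correctly.
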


All the definitions and results in the section remain valid (but might sound less natural)  if $ s $ and $ t $ are not 
vertices but disjoint vertex sets.

\section{The proof of the main result}\label{sec: proofLovCHer}
We start by giving a short outline of the proof. In the first two subsections we apply relatively simple techniques in order to 
reduce Theorem \ref{t:LCh inf} to Theorem \ref{t: LCh1 infIntro}  and  cut the latter problem into countable sub-problems. The 
third subsection is devoted to 
the proof of the reduced problem, namely the countable case of Theorem \ref{t: LCh1 infIntro}.  The core of that proof is 
to show that for every given  $ e\in  \bigcup_{t\in T}\delta(t)$  there is a path $ P $ through $ e $ such 
that $ G-E(P) $ maintains the premise of Theorem \ref{t: LCh1 infIntro}. If $ G $ is countable, then one can simply use this 
recursively  to build the desired path-system. 
\begin{proof}[Proof of Theorem \ref{t:LCh inf}]
We will  use only that $ \{ t\in T: d(t)>1 \} $ is countable instead of the countability of the whole $ T $. As a first step we reduce 
Theorem \ref{t:LCh inf} to the following theorem.

\begin{thm}\label{t:LCh1 inf}
Let $G$ be a  graph and let $T\subseteq V(G)$ be  such that $ d(t)\leq 1 $ for all but countably many $ t\in T $ and 
there is no $ X\subseteq V(G)\setminus T $ 
where $ d(X) $ is an odd natural number. Assume that for each $t\in T$  
there is a system $\mathcal{P}_t$ of edge-disjoint $T$-paths covering $\delta(t)$. Then there exists a system $\mathcal{P}$ of 
edge-disjoint $T$-paths covering $\bigcup_{t\in T}\delta(t)$. 
\end{thm}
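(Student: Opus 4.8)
The plan is to prove Theorem~\ref{t:LCh1 inf} in two stages: first reduce it to the case that $G$ is countable, then settle that case by removing $T$-paths one by one. For the reduction, both the hypothesis and the conclusion decompose over the connected components of $G$, so it suffices to treat one component $M$ at a time and to discard the components that contain no terminal. Inside $M$ the terminals of degree $\ge 2$ form a countable set, so the only source of uncountability is a possibly uncountable family of leaf-terminals (together with the non-terminal vertices reachable only through them). I would absorb these by writing $M$ as an edge-disjoint union of a countable ``core'' subgraph containing every terminal of degree $\ge 2$ and a family of pairwise edge-disjoint countable ``pendant pieces'', each having only leaf-terminals and inside which the pendant terminal-edges can be routed among one another; since distinct pieces, and the core, meet only outside $T$, every $T$-path of a piece is a $T$-path of $G$, so the union of solutions of the (countably many) pieces and of the core solves $M$. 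Setting up this decomposition uses the given systems $\mathcal{P}_t$ and the no-odd-cut hypothesis; it is the elementary-but-fiddly part.

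For the countable case, everything rests on the following claim: \emph{if $G$ is countable and satisfies the hypotheses of Theorem~\ref{t:LCh1 inf}, then for every $e\in\bigcup_{t\in T}\delta(t)$ there is a $T$-path $P$ with $e\in E(P)$ such that $G-E(P)$ again satisfies those hypotheses.} Granting the claim, enumerate the countably many edges in $\bigcup_{t\in T}\delta(t)$ as $e_0,e_1,\dots$ and build $\mathcal{P}$ recursively: at step $n$, if $e_n$ already lies on a path chosen so far do nothing, otherwise apply the claim to the current graph and to $e_n$ to obtain a $T$-path $P_n$ with $e_n\in E(P_n)$, put $P_n$ into $\mathcal{P}$, and delete $E(P_n)$. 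The chosen paths are pairwise edge-disjoint $T$-paths; moreover $e_n$ is still incident to a terminal at step $n$ (deleting a $T$-path lowers a terminal degree only at the two ends of that path, and $e_n$ is untouched until it is used), so the paths in $\mathcal{P}$ cover $\bigcup_{t\in T}\delta(t)$. No limit stage occurs, which is exactly why $G$ is taken countable.

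It remains to prove the claim, and this is where the real work is. Two of the three conditions survive for \emph{any} $T$-path $P$: the condition ``$d(t)\le 1$ for all but countably many $t$'' because $P$ is finite; and ``no $X\subseteq V\setminus T$ has $d(X)$ an odd natural number'' because a $T$-path has both ends in $T$ and hence meets $\delta(X)$ in an even number of edges for every $X\subseteq V\setminus T$, so $d_{G-E(P)}(X)$ differs from $d_G(X)$ by an even amount and stays non-odd (and a finite deletion cannot turn an infinite $d(X)$ into a natural number). The content is the third condition: $P$ must be chosen so that \emph{for every} terminal $t'$ the graph $G-E(P)$ still carries a system of edge-disjoint $T$-paths covering $\delta_{G-E(P)}(t')$. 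Writing the endpoints of $e$ as $s\in T$ and $x$, any $T$-path through $e$ must have $s$ as an endpoint; for $s$ there is an obvious candidate, namely the path of $\mathcal{P}_s$ through $e$, whose removal leaves the remaining, edge-disjoint paths of $\mathcal{P}_s$ covering $\delta(s)\setminus\{e\}$ — but this candidate need not be good at the other terminals, whose witnessing systems may overlap it heavily. So the point is to produce a single $P$ that is simultaneously good at every terminal.

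To do this I would work with large waves rather than with the systems $\mathcal{P}_{t'}$ themselves. By Corollary~\ref{cor: Pym cor} applied to the pair $(t',T-t')$ and to $\mathcal{P}_{t'}$, each terminal $t'$ admits a large $t'(T-t')$-wave $\mathcal{W}_{t'}$ whose starting edges are all of $\delta(t')$; and by Lemma~\ref{l: large wave} together with Corollary~\ref{cor: delta(s) coverable}, coverability of $\delta(t')$ is precisely the absence of a non-trivial wave after contracting the far side of the cut $C_{\mathcal{W}_{t'}}$. I would then take $P$ to be the path of $\mathcal{W}_s$ through $e$, run from $s$ out to the cut $C_{\mathcal{W}_s}$, completed to a $T$-path inside the far side of that cut, with the completion chosen to be compatible with the large waves of the remaining terminals — the kind of compatibility that Theorem~\ref{t: Pym} and the complete-lattice structure of $\mathfrak{C}^{-}$ and of the Erd\H{o}s--Menger cuts (Theorem~\ref{l: EMlattice}, Lemma~\ref{l: large wave}) make available. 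The main obstacle, which I expect to be the technical heart of the whole proof, is to verify that deleting this single $P$ keeps every $\mathcal{W}_{t'}$ large — equivalently, creates no non-trivial wave at any terminal — and to establish this uniformly over the countably many terminals; this asks for a careful analysis of how $E(P)$ meets the various cuts $C_{\mathcal{W}_{t'}}$, together with an argument that any newly created wave could be pushed back across $P$, contradicting largeness.
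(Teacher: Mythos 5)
Your overall architecture --- reduce to countable $G$, then build $\mathcal{P}$ by a straightforward recursion from the claim that for every $e\in\bigcup_{t\in T}\delta(t)$ there is a $T$-path through $e$ whose deletion preserves the hypotheses --- is exactly the paper's (your key claim is Claim \ref{cl: 1path make}, and your parity observation is Observation \ref{o: 1path delete}). However, both substantive steps have genuine gaps. For the reduction, your premise that ``the only source of uncountability is a possibly uncountable family of leaf-terminals'' is false: the countably many terminals of degree $\geq 2$ may each have uncountable degree, and there may be uncountably many non-terminal vertices and edges attached to no leaf-terminal at all (take $T=\{s,t\}$ joined by uncountably many internally disjoint paths of length two). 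In such cases no countable ``core'' containing every terminal of degree $\geq 2$ can exist, and pendant pieces with only leaf-terminals cannot absorb the rest, so your decomposition scheme breaks down. The paper instead partitions the edge set $E$ into countable pieces closed under a closure operator generated by (i) a Nash-Williams cycle decomposition of $G/T$, which preserves inner-Eulerianness piecewise, and (ii) the edge sets of the paths in the witnessing systems $\mathcal{P}_t$ for the countably many $t$ of degree $>1$, which preserves linkability piecewise; leaf-terminals are then handled by Lemma \ref{l: t-leaf}. Some device of this kind is needed.

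More seriously, you do not prove the key claim: you explicitly defer ``the technical heart'' and propose choosing $P$ so that its deletion ``keeps every $\mathcal{W}_{t'}$ large --- equivalently, creates no non-trivial wave at any terminal.'' That target is stronger than linkability and there is no evident way to secure it for a single path uniformly over countably many terminals; the paper does not attempt it. Its actual argument is different in kind: first the special case where every terminal but one is a leaf (Corollary \ref{cor: SpecialC}); then Proposition \ref{p: key}, asserting that if there is no non-trivial $s$-wave then linkability for $s$ survives the deletion of \emph{any two edges}, proved via the notion of $s$-tight Erdős-Menger cuts (Lemma \ref{l:tight cut}) and an auxiliary graph to which Corollary \ref{cor: SpecialC} is applied; and finally a minimal-counterexample argument on $\left|E(P_{e_0})\right|$ combined with Lovász's splitting operation, replacing $e_0$ and the following edge $f_0$ by a single edge $h_0$, where Proposition \ref{p: key} is precisely what restores linkability at the other terminals after the two edges $e_0,f_0$ are removed. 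None of these ingredients appears in your sketch, so the heart of the theorem remains unproved.
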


For $ s\in T $ we will write shortly $ s $-wave instead of $ s(T-s) $-wave. Recall, it is a  system $\mathcal{W}$ of  pairwise 
edge-disjoint paths starting at $ s $ such that the set $ C_{\mathcal{W}} $ of the last edges of the paths is a minimal cut 
between $ s $ 
and $ T-s $.
\subsection{Elimination of waves}\label{subs: waveelim}

We will call shortly the condition about the existence of the path-system $\mathcal{P}_t$ in Theorem \ref{t:LCh1 inf} the 
\emph{linkability condition for} $t$ (w.r.t. $ G $ and $ T $)
and we refer to the conjunction of these for $t\in T$ as the
\emph{linkability condition}. First we define a process that we call \emph{wave elimination}. We may assume that $ G $ is 
connected otherwise we define the elimination process component-wise. Let $ T'\subseteq T $ be given. We pick an arbitrary 
enumeration $ T'=\{ t_\xi: \xi<\kappa \} $ 
and  define by transfinite 
recursion $ G_\xi $ for $ \xi\leq \kappa $. Let $ G_0:=G $. If $ 
G_\xi $ is already defined then let $ \mathcal{W}_{\xi} $ be a large $ t_\xi $-wave with respect to  $ G_\xi $ and $ T $ 
(exists by Lemma \ref{l: large wave}). We obtain $ G_{\xi+1} $ by contracting  the $ t_\xi $-side of the cut $ 
C_{\mathcal{W}_{\xi}} $ in $ G_\xi $ to $ t_\xi $ (see Figure \ref{fig welim}). If $ \xi $ is a limit ordinal then 
we obtain $ G_{\xi} $ by doing all the previous contractions simultaneously. The recursion is done. 

The cardinal $ 
d_{G_{\kappa}}(X) $ for 
$X\subseteq 
V(G_{\kappa})\setminus T $ cannot be an odd natural number  because $ d_{G_{\kappa}}(X)=d_G(X) $ and $ G $ was inner 
Eulerian w.r.t. $ T $. Furthermore, Corollary \ref{cor: delta(s) coverable} ensures that  for $ \xi<\kappa $
there is no non-trivial $ t_\xi $-wave 
in $ G_{\xi+1} $.  Since any $ t_\xi $-wave in $ 
G_\kappa$ is corresponding to a $ t_\xi $-wave in $  G_{\xi+1} $, it follows that for each  $ t\in T' $ there is only the 
trivial 
$ t $-wave in $ G_\kappa $. By taking $ T':=T $, this is (more than) enough to guarantee the 
 linkability condition at Theorem \ref{t:LCh1 inf} (see Corollary \ref{cor: delta(s) coverable}).
 Therefore $ G_\kappa $ satisfies the premise of Theorem \ref{t:LCh1 inf} and hence assuming Theorem \ref{t:LCh1 inf} we 
 may conclude that there is a 
system $ \mathcal{P} $ of $ T $-paths in $ G_\kappa $ covering $ 
\bigcup_{t\in T}\delta_{G_\kappa}(t) $. By using the waves $ \mathcal{W}_{\xi} $, the system $ \mathcal{P} $ can be 
extended 
to a system $ \mathcal{Q} 
$ of $ T $-paths in $ G $ where the $ t_\xi(T-t_\xi) $-cut $ C_{\mathcal{W}_{\xi}} $ is orthogonal to $ \mathcal{Q}_{t_\xi}:=\{ 
Q\in 
\mathcal{Q}: t_\xi\in V(Q) \} $. Therefore $ \mathcal{Q} $ satisfies the requirements of Theorem \ref{t:LCh inf}.

\begin{figure}[H]
\centering
\begin{tikzpicture}[scale=0.6]

\node (v1) at (-2,-0.5) {};
\node  at (-2.2,-0.7) {$t_0$};

\draw  (v1) ellipse (2 and 2);
\node (v3) at (4.5,-0.5) {};
\node  at (4.3,-0.7) {$t_1$};
\node (v2) at (0.5,5.5) {};
\node  at (0.5,5.25) {$t_2$};
\draw  (v2) ellipse (2 and 2);
\draw  (v3) ellipse (2 and 2);

\draw  plot[smooth, tension=.7] coordinates {(v1) (-2,0) (-2,0.5) (-2.5,1) (-2.5,2)};
\draw  plot[smooth, tension=.7] coordinates {(-2,-0.5) (-1.5,0) (-1.5,1) (-1.5,2)};
\draw  plot[smooth, tension=.7] coordinates {(-2,-0.5) (-1,-0.5) (-1,0.5) (-0.5,1.5)};
\draw  plot[smooth, tension=.7] coordinates {(-2,-0.5) (-1.5,-1) (-0.5,-1) (0,-0.5) (0.5,0)};
\draw  plot[smooth, tension=.7] coordinates {(v2) (-1,5) (-0.5,4.5) (-1,4) (-1,4)};
\draw  plot[smooth, tension=.7] coordinates {(0.5,5.5) (0,5) (0,4.5) (0.5,4.5) (0,4) (-0.5,3.5)};
\draw  plot[smooth, tension=.7] coordinates {(0.5,5.5) (1,5) (1.5,5.5) (1.5,4.5) (1.5,3.5)};
\draw  plot[smooth, tension=.7] coordinates {(v3) (4.5,0.5) (4,0.5) (3,0) (2,0)};
\draw  plot[smooth, tension=.7] coordinates {(4.5,-0.5) (5,0) (5,0.5) (5.5,1.5)};
\draw  plot[smooth, tension=.7] coordinates {(4.5,-0.5) (3.5,-0.5) (3,-1) (2,-1)};
\draw  plot[smooth, tension=.7] coordinates {(4.5,-0.5) (4.5,-1) (4,-1.5) (3,-2) (2.5,-2)};
\node at (-0.5,0) {$\mathcal{W}_{0}$};
\node at (0.9,4.1) {$\mathcal{W}_{2}$};
\node at (3.6,-1.1) {$\mathcal{W}_{1}$};
\draw  plot[smooth, tension=.7] coordinates {(-2,-0.5) (-2.5,-0.5)};
\draw  plot[smooth, tension=.7] coordinates {(-2,-0.5) (-2,-1)};
\draw  plot[smooth, tension=.7] coordinates {(0.5,5.5) (0,6)};

\draw  plot[smooth, tension=.7] coordinates {(0.5,5.5) (1,6)};
\draw  plot[smooth, tension=.7] coordinates {(0.5,5.5) (0.5,6)};
\draw  plot[smooth, tension=.7] coordinates {(4.5,-0.5) (5,-1)};
\draw  plot[smooth, tension=.7] coordinates {(4.5,-0.5) (5,-0.5)};
\end{tikzpicture}
\caption{The contracted vertex sets during the wave elimination}\label{fig welim}
\end{figure}
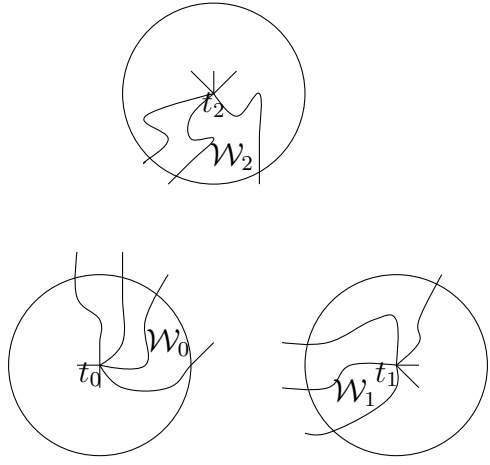 

\subsection{Reduction to countable graphs}\label{subs: countable}
In the next reduction we show that it is enough to restrict our attention  to countable graphs in the proof of Theorem 
\ref{t:LCh1 inf}.  First of all, we may assume without loss of generality that $ 
T $ does not span any edges. Indeed,  otherwise we consider the graph $ G' $ obtained from $ G $ via the deletion of  
 those edges. Then we pick a path-system $ \mathcal{P}' $ by applying this special case of Theorem \ref{t:LCh1 inf} with $ G'  $ 
 and $ T $. Finally,  we obtain $ 
 \mathcal{P} $ by extending  $ \mathcal{P}' $  with the 
deleted edges as $ T $-paths of length one.  

By applying some  basic elementary submodel-type  arguments we cut $ E $ into countable pieces each of them satisfying both 
the inner Eulerian and the linkability condition w.r.t. $ T $.  The contraction of $ T $ to an arbitrary $ t $ results in an Eulerian 
graph $ 
G/T $ by Theorem \ref{t: NW}  thus we can take
a partition $\mathcal{O}$  of $E(G/T)=E$ into (edge sets of)  $ G/T $-cycles. These are cycles and $ T $-paths in $ G $ .
Let $ T':=\{ t\in T: d(t)>1 \} $. For $t\in T'$ let $ \mathcal{P}_t$ be a system 
of $T$-paths witnessing the linkability condition for $ t $ and let 
$ \mathcal{\mathcal{E}}:=\{ E(P): (\exists t\in T') P\in \mathcal{P}_t \}$. We 
define a 
closure operation $c$ on $2^E$ in the following way. Intuitively we want to close a set $ F_0\subseteq E $ under the property 
that if it shares 
an edge with some $ O \in \mathcal{O}$ or $ E(P)\in \mathcal{E} $, then it contains it completely.
Formally let $c(F_0):=\bigcup_{n\in \mathbb{N}}F_n$ where 

\[ F_{n+1}:=F_n\cup \bigcup\{O\in \mathcal{O}: F_n\cap O\neq \emptyset\}\cup \bigcup \{ E(P)\in \mathcal{E}:  F_n\cap 
E(P)\neq 
\emptyset\}.  \]

We call an $F$ $c$-closed if $c(F)=F$.  We claim that $c$ satisfies the following properties:

\begin{enumerate}
    \item\label{i:c1} The family of $c$-closed sets forms a complete Boolean algebra with respect to  the usual $\cup$ and $\cap$;
    \item\label{i:c2} If $F$ is countable then so is $c(F)$;
    \item\label{i:c3} If $F$ is $c$-closed, then  $(V,F,I \upharpoonright F)$ and $ T $ satisfy the  premise of Theorem 
    \ref{t:LCh1 inf}.
    
\end{enumerate}
Indeed,  property (\ref{i:c1}) follows directly from the construction and  (\ref{i:c2}) holds because of the assumption $ 
\left|T'\right|\leq \aleph_0 $ and the fact that each edge $ e $ is  used by at most one path in $ \mathcal{P}_t $ for every fixed $ t 
$ and $ e $ is contained in a unique element of $ \mathcal{O} $. The `inner Eulerian' and linkability 
 for $ t\in T' $ in condition (\ref{i:c3}) are ensured by $ F $ not subdividing any $ O $ and $ E(P) $ respectively.  Recall that $ 
 d(t)\leq 1 $ for  $ t\in T\setminus T' $ by 
definition. Preservation of the linkability for these $ t $ is ``automatic'':

\begin{lem}\label{l: t-leaf}
If $ H $ is an inner Eulerian graph w.r.t. $ T\subseteq V(H) $, then the linkability condition holds for all $ t\in T $ with $ d(t)\leq 1 
$.
\end{lem}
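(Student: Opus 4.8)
The plan is to split on the value of $d(t)$. If $d(t)=0$ then $\delta(t)=\varnothing$, and the empty path‑system $\mathcal{P}_t:=\varnothing$ vacuously covers $\delta(t)$, so there is nothing to do. The whole content of the lemma is therefore the case $d(t)=1$, say $\delta(t)=\{e\}$ with $I(e)=\{t,v\}$ (and $v\neq t$, since there are no loops). Here I need to exhibit a system $\mathcal{P}_t$ of edge‑disjoint $T$‑paths covering $\delta(t)=\{e\}$, i.e.\ a single $T$‑path through $e$. Since $e$ is the \emph{only} edge incident with $t$, any path containing $e$ must have $t$ as an end‑vertex; so it suffices to find a path from $v$ to $T$ inside $H-t$ whose only $T$‑vertex is its last one, and then prepend $e$.

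For the construction: if $v\in T$ we are already done, since $t,e,v$ is a $T$‑path of length one covering $\{e\}$. Otherwise consider the connected component $W$ of $v$ in the graph $H-t$ (note $t\notin W$ and $v\in W$). The key claim is that $W$ meets $T$. Suppose not; then $W\subseteq V(H)\setminus T$. Because $e$ is the unique edge incident with $t$, every edge of $H$ different from $e$ is an edge of $H-t$, hence — as $W$ is a whole component of $H-t$ — it cannot have exactly one end in $W$; and $e$ itself has exactly one end (namely $v$) in $W$. Therefore $\delta_H(W)=\{e\}$, so $d_H(W)=1$ is an odd natural number while $W\subseteq V(H)\setminus T$, contradicting the inner Eulerian hypothesis. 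So we may pick $t'\in W\cap T$, take a (finite) path in $H-t$ from $v$ to $t'$, and truncate it at its first vertex lying in $T$ — this vertex is not $v$ since $v\notin T$. Prepending $e$ yields a $T$‑path $P$ through $e$ whose only terminal vertices are $t$ and that first $T$‑vertex (which differ, as one lies in $W$ and the other does not). Then $\mathcal{P}_t:=\{P\}$ witnesses the linkability condition for $t$.

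I do not expect any real obstacle here. The one point to keep in mind is that the argument genuinely needs the set‑form of the hypothesis (``$d(X)$ is not an odd natural number for $X\subseteq V(H)\setminus T$'') rather than the finite ``all inner degrees even'' form, because the component $W$ may be infinite; but since $d_H(W)=1$ is finite and odd, the set‑form applies verbatim, and the finiteness of all paths in the paper is respected because $P$ is obtained by truncating a finite path in a connected graph.
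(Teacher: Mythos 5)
Your proof is correct, but it takes a genuinely different and more elementary route than the paper's. The paper disposes of the lemma in two lines by invoking the Nash--Williams characterisation (Theorem \ref{t: NW}): since $H/T$ has no odd cut it is Eulerian, so $E(H)$ partitions into edge sets of cycles and $T$-paths, and the unique edge at a vertex $t$ with $d(t)=1$ cannot lie on a cycle, hence lies on a $T$-path of the decomposition, which ends at $t$ and witnesses linkability. You avoid Theorem \ref{t: NW} entirely: you use only the single instance of the hypothesis saying that no $X\subseteq V(H)\setminus T$ has $d(X)=1$, applied to the component $W$ of the neighbour $v$ of $t$ in $H-t$ (for which $\delta_H(W)=\{e\}$ would hold if $W$ missed $T$), and then build the required $T$-path by hand by truncating a $v$--$T$ path at its first vertex in $T$ and prepending $e$. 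Your case split ($d(t)=0$; $v\in T$; $v\notin T$), the verification that the constructed path has two \emph{distinct} endpoints in $T$ and no internal $T$-vertices, and your closing remark that only the set form of the inner Eulerian condition is needed are all handled correctly. What the paper's route buys is brevity in context, since the cycle/$T$-path decomposition is constructed anyway in Subsection \ref{subs: countable}; what yours buys is self-containedness and a precise accounting of how little of the hypothesis the lemma actually requires.
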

\begin{proof}
$ E(H) $ can be partitioned into the edge sets of cycles and $ T $-paths. If $ d(t)=1 $, then the unique edge incident with $ t $ 
cannot be in a cycle so must be in a $ T $-path.
\end{proof}

In order to reduce Theorem \ref{t:LCh1 inf} to countable graphs, it is enough to partition $E$ into countable $c$-closed sets 
$F_\xi$. Indeed,  then $G_\xi:=(V,F_\xi, I\upharpoonright F_\xi)$ is countable (apart from 
isolated vertices) and satisfies the  premise of Theorem \ref{t:LCh1 inf} with $ T $ by property (\ref{i:c3}). Hence by applying 
the countable case 
of Theorem \ref{t:LCh1 inf}, we can take a system $\mathcal{P}_\xi$ of 
$T$-paths in $ G_\xi $ covering  the edges $ \bigcup_{t\in T}\delta_{G_\xi}(t) $. Finally, $\bigcup_\xi \mathcal{P}_{\xi}$ is as 
desired. 

Suppose that the 
pairwise disjoint countable $c$-closed sets 
$\{F_\xi: \xi< \alpha\}$ are already defined for some ordinal $\alpha$. Then $E\setminus \bigcup_{\xi<\alpha}F_\xi $ is 
$c$-closed by property (\ref{i:c1}). If it is empty then we are done. Otherwise let $F_\alpha:= c(\{e\})$ for an arbitrary $e\in 
E\setminus \bigcup_{\xi<\alpha}F_{\xi} $, which is countable by property (\ref{i:c2}). The recursion is done.

\subsection{The proof of Theorem \ref{t:LCh1 inf}}
We will make use of the following simple observation. 
\begin{obs}\label{o: 1path delete}
 The deletion of the edges of a $ T $-path preserves the condition that there is no $ X\subseteq V\setminus T $ with $ d(X) $ odd.
\end{obs}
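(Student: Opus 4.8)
\textbf{Proof proposal for Observation~\ref{o: 1path delete}.}

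The plan is to compute how $d(X)$ changes when the edges of a $T$-path $P$ are removed, and to argue that the change is always even whenever $X\subseteq V\setminus T$. Let $P$ be a $T$-path with end-vertices $a,b\in T$ (and no internal vertex in $T$, by definition of $T$-path), and let $X\subseteq V\setminus T$ be arbitrary. Write $d'$ for the degree-of-cut function in the graph $G-E(P)$. Since $E(P)$ is finite, $d(X)$ and $d'(X)$ differ by a finite amount, so the parity question makes sense provided $d(X)$ is finite; if $d(X)$ is infinite there is nothing to prove because the hypothesis only forbids odd \emph{natural} numbers, and removing finitely many edges keeps an infinite cut infinite. So assume $d(X)$ is finite. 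Then $d(X)-d'(X)=|E(P)\cap\delta_G(X)|$, the number of edges of $P$ that cross the boundary of $X$.

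The key step is the observation that $|E(P)\cap\delta(X)|$ has the same parity as $|\{a,b\}\cap X|$. Indeed, traverse $P$ from $a$ to $b$; each time an edge of $P$ crosses $\delta(X)$ the traversal switches between $X$ and $V\setminus X$, so the number of such crossings is even if $a$ and $b$ lie on the same side of the cut and odd if they lie on opposite sides — that is, $|E(P)\cap\delta(X)|\equiv |\{a,b\}\cap X|\pmod 2$. But $a,b\in T$ and $X\cap T=\varnothing$, hence $\{a,b\}\cap X=\varnothing$ and $|E(P)\cap\delta(X)|$ is even. Therefore $d'(X)=d(X)-|E(P)\cap\delta(X)|$ has the same parity as $d(X)$, and in particular $d'(X)$ is not an odd natural number. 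Since $X\subseteq V\setminus T=V(G-E(P))\setminus T$ was arbitrary, the condition is preserved.

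I do not expect a genuine obstacle here; the only points requiring a word of care are the bookkeeping for infinite cuts (handled by noting that the hypothesis concerns odd natural numbers only, so an infinite cut is irrelevant and stays infinite after deleting finitely many edges) and the parity-of-crossings claim, which is the standard fact that a walk between two vertices crosses any vertex-cut an even number of times iff the endpoints are on the same side. Everything else is immediate from the finiteness of paths, which is in force throughout the paper.
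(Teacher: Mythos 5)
Your argument is correct and is exactly the standard parity-of-crossings reasoning that the paper implicitly relies on (it states the Observation without proof): since both end-vertices of a $T$-path lie in $T$ and hence outside any $X\subseteq V\setminus T$, the path meets $\delta(X)$ an even number of times, so the parity of $d(X)$ is unchanged, and your remark that infinite cuts stay infinite after deleting finitely many edges correctly disposes of the non-finite case.
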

The core of our proof is the repeated application of the following claim:

\begin{claim}\label{cl: 1path make}
Let $G$ be a  graph and let $T\subseteq V(G)$ be such that $ G $ is inner Eulerian w.r.t. $ T $ (i.e. there is no $ X\subseteq 
V(G)\setminus T $ 
where $ d(X) $ is an odd natural number). Assume that for each $t\in T$  
there is a system $\mathcal{P}_t$ of edge-disjoint $T$-paths covering $\delta(t)$. Then for every $t\in T$ and $e\in \delta(t)$ 
there exists a $T$-path $P$ through $e$ such that $G-E(P)$ satisfies the linkability 
condition (and remains inner Eulerian w.r.t. $ T $).
\end{claim}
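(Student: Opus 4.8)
By Observation \ref{o: 1path delete} the deletion of a $T$-path keeps $G$ inner Eulerian, so it suffices to produce a $T$-path $P$ through $e$ for which $G-E(P)$ still satisfies the linkability condition. Write $e\in\delta(t)$ and let $v$ be the other end-vertex of $e$. If $v\in T$, then any $T$-path through $e$ equals $e$ itself (neither $t$ nor $v$ can be internal on a $T$-path), hence $e$ belongs to both $\mathcal{P}_t$ and $\mathcal{P}_v$ as a length-one path, no $\delta(s)$ with $s\notin\{t,v\}$ is affected, and no system $\mathcal{P}_s$ ($s\notin\{t,v\}$) uses $e$; so $\mathcal{P}_t\setminus\{e\}$, $\mathcal{P}_v\setminus\{e\}$ and the $\mathcal{P}_s$ witness linkability for $G-e$, and we are done. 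From now on assume $v\notin T$. The next point is that \emph{one end-vertex is for free}: using the linkability condition for $t$, fix a system $\mathcal{P}_t$ of edge-disjoint $T$-paths covering $\delta(t)$, oriented away from $t$, and let $P_0\in\mathcal{P}_t$ be the path whose first edge is $e$, with last vertex $t_1$. For any $T$-path $P$ through $e$ starting at $t$, the only terminals whose linkability can break down in $G-E(P)$ are the two ends of $P$ together with terminals $s$ not lying on $P$ whose covering systems happened to cross $E(P)$; and linkability for $t$ is restored automatically whenever $\delta(t)\setminus\{e\}$ can be re-covered in $G-E(P)$ — for $P=P_0$ this is witnessed by $\mathcal{P}_t\setminus\{P_0\}$. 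Thus the real task is to route the remainder of $P$ so that its deletion harms neither its far end-vertex nor any bystander terminal.

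The guiding special case is $T=\{t,t_1\}$: there Theorem \ref{t: Pym} applied to $\mathcal{P}_t$ and $\mathcal{P}_{t_1}$ yields a single system of edge-disjoint $tt_1$-paths covering $\delta(t)$ and $\delta(t_1)$ simultaneously, and the member of this system containing $e$ is the desired $P$. In general the plan is to re-route $P_0$, without moving its first edge $e$, into a $T$-path $P$ that lies in a covering system of $\delta(t)$ and, at the same time, in a covering system of $\delta(t')$, where $t'$ is the far end of $P$; once the far end is dealt with, one has to re-cover those $\delta(s)$, $s\neq t,t'$, whose old covering systems crossed the finite set $E(P)$. The re-routings are produced by the augmenting path lemma (Lemma \ref{l: aug path}), and they are organised using the Erdős--Menger cut lattice (Theorem \ref{l: EMlattice}) and the structure of large waves (Lemma \ref{l: large wave}, Corollary \ref{cor: Pym cor}), which allow bad initial segments of covering paths to be exchanged for better ones in a monotone way along $\preceq$.

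The hard part is the global bookkeeping hidden in ``without harming any bystander terminal'': an exchange that restores linkability at one terminal may, a priori, push the defect onto another one (as already happens for deletions of arbitrary $T$-paths when $|T|\ge 3$). Since $G$ is countable, the plan is to run the correction through an enumeration of $T$, performing at each step a monotone move in the relevant cut lattice, and then to argue that the process stabilises — so that in the limit a single $T$-path $P$ through $e$ is obtained together with, for every terminal $s$, a system of $T$-paths of $G-E(P)$ covering $\delta_{G-E(P)}(s)$. Checking that these monotone exchanges cannot propagate damage indefinitely, and that only finitely many covering systems are ever disturbed (because $E(P)$ is finite), is where the bulk of the work lies; the cut-lattice completeness of Theorems \ref{l: EMlattice} and \ref{l: large wave} is what makes the limiting argument go through.
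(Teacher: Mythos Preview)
Your proposal is not a proof but a plan, and the plan does not close. You correctly dispose of the case $v\in T$ and correctly note that linkability at the \emph{near} end $t$ is free once $P$ is taken from $\mathcal{P}_t$. But the substance of the claim is exactly what you flag as ``the bulk of the work'': preventing the deletion of $E(P)$ from destroying linkability at the far end and at all bystander terminals. Your proposed mechanism---an iterative correction through an enumeration of $T$, with ``monotone moves'' in some cut lattice and a limiting argument---is not carried out, and there is no reason to expect it to terminate or stabilise: re-routing to repair one terminal changes $P$, which can disturb new terminals, and the cut lattices $\mathfrak{C}^{-}(s,T-s)$ for different $s$ do not interact in any way that yields a joint monotone quantity. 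The appeal to ``$E(P)$ is finite'' does not help, since $P$ itself is being modified at each step. (Also, the claim is stated for arbitrary $G$; countability of $G$ is not part of its hypotheses.)

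The paper's proof follows a completely different line and avoids the termination problem altogether. It first establishes a structural fact (Proposition~\ref{p: key}): if there is no non-trivial $s$-wave, then the linkability condition for $s$ survives the deletion of \emph{any two edges}. This is proved via an $s$-tight Erd\H{o}s--Menger cut (Lemma~\ref{l:tight cut}) and a reduction to the special case where all terminals but one have degree $\le 1$ (Corollary~\ref{cor: SpecialC}). With this in hand, the general claim follows by a minimality argument combined with Lov\'asz splitting: take a counterexample minimising $|E(P_{e_0})|$, eliminate waves at $T-s$ so that Proposition~\ref{p: key} applies at every $t\in T-s$, then splice the first two edges $e_0,f_0$ of $P_{e_0}$ into a single edge $h_0$. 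Proposition~\ref{p: key} guarantees the spliced graph still satisfies the hypotheses, and $|E(P_{h_0})|<|E(P_{e_0})|$ contradicts minimality. The key idea you are missing is precisely this two-edge robustness (Proposition~\ref{p: key}); nothing in your outline provides a substitute for it.
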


Indeed, we only need to prove Theorem \ref{t:LCh1 inf} for countable $G$ as discussed in the previous subsection. Assuming 
Claim \ref{cl: 1path make},  a system of $T$-paths covering $ \bigcup_{t\in T}\delta(t) $ can be 
constructed by a straightforward recursion. 

\begin{nbiz}[Proof of Claim \ref{cl: 1path make}]
First we give a proof in the special case where there is some $s\in T$ such that $d(t)\leq 1$ for all $t\in T-s$. Let us fix a system 
$\mathcal{P}_s$ of edge-disjoint paths between $s$ and $T-s$ covering $\delta(s)$. 

For $e\in \delta(s)$, we simply take the unique $P\in \mathcal{P}_s$  through $e$. By Observation 
\ref{o: 1path delete}, the graph $ G-E(P) $ is still inner Eulerian w.r.t. $ T $. By Lemma \ref{l: t-leaf} it is enough to check that 
the 
linkability condition is preserved for $ s $ but it is 
obviously true  witnessed by $ 
\mathcal{P}_s\setminus \{ P \} $. 

Suppose now that $e\in \delta(t)$ for a $t\in T-s$. If $t$ is an end-vertex of some $P\in \mathcal{P}_s$, then we take $P$ and 
argue as in the previous paragraph. If it is not the case, then either we  replace $ \mathcal{P}_s $ by another $ \mathcal{P}_s' $ 
where $t$ is an end-vertex of some $P\in \mathcal{P}_s'$ or choose $ P $ to be edge-disjoint from $ \mathcal{P}_s $. To do so,  
let $Q$ be an 
arbitrary path between $t$ and $T-t$. If $E(Q)\cap E(\mathcal{P}_s)= \emptyset$, then we take 
$P:=Q$ and the linkability condition holds for $s$ since $\mathcal{P}$ lives in $G-E(P)$.
If $E(Q)\cap E(\mathcal{P})\neq \emptyset$, then let $v\in V(Q)\cap V(\mathcal{P})$ be the first common vertex while going 
along $ Q $ from $t$. 
Let $P'\in 
\mathcal{P}_s$ be such that $v\in V(P')$. We get $\mathcal{P}_s'$ by replacing $P'$ in $\mathcal{P}_s$ with the path $ P $ we 
obtain by uniting the initial segment of $ P' $ from $ s $ to $ v $ with the initial segment of $ Q $ from $ t $ to $ v $. 

Applying this iteratively we conclude:
\begin{cor}\label{cor: SpecialC}
Theorem \ref{t:LCh1 inf} holds whenever there is an  $s\in T$ such that  $d(t)\leq 1$ for every $ t\in T-s $.
\end{cor}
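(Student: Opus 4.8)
The plan is to deduce Corollary \ref{cor: SpecialC} from the special case of Claim \ref{cl: 1path make} established just above, via the ``straightforward recursion'' mentioned before the proof of the claim. First I would invoke the reduction of Subsection \ref{subs: countable}, which shows that it suffices to prove Theorem \ref{t:LCh1 inf} for countable $G$. That reduction passes from $G$ to subgraphs $G_\xi=(V,F_\xi,I\upharpoonright F_\xi)$ whose degrees are everywhere at most the degrees in $G$; hence if $d_G(t)\le 1$ for all $t\in T-s$, the same holds in each $G_\xi$, so every $G_\xi$ still satisfies the hypothesis of Corollary \ref{cor: SpecialC}. Thus it is enough to prove Corollary \ref{cor: SpecialC} when $G$ is countable.

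So assume $G$ is countable and fix an enumeration $\bigcup_{t\in T}\delta(t)=\{e_n:n<\omega\}$. I would construct edge-disjoint $T$-paths $P_n$ together with a decreasing chain of subgraphs $G_n$ recursively: put $G_0:=G$, and suppose $G_n$ is obtained from $G$ by deleting the edges of the (finitely many) paths $P_0,\dots,P_{n-1}$ chosen so far and still satisfies the hypothesis of Corollary \ref{cor: SpecialC}. If $e_n\notin E(G_n)$, then $e_n$ already lies on one of $P_0,\dots,P_{n-1}$; I set $G_{n+1}:=G_n$ and leave $P_n$ undefined. Otherwise $e_n\in\delta_{G_n}(t)$ for the end-vertex $t\in T$ of $e_n$, and the special case of Claim \ref{cl: 1path make}, applied to $G_n$, produces a $T$-path $P_n$ through $e_n$ with $G_{n+1}:=G_n-E(P_n)$ again inner Eulerian w.r.t. $T$ and still satisfying the linkability condition.

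To finish, I would check that the recursion never stalls and that its output is as required. The bound $d(t)\le 1$ for $t\in T-s$ is inherited by every $G_n$, since deleting edges cannot increase a degree; together with the previous paragraph this shows each $G_n$ indeed satisfies the hypothesis of Corollary \ref{cor: SpecialC}, so the recursion is well defined. The paths $P_n$ are pairwise edge-disjoint because $P_n$ is a path in $G_n$, which contains none of the edges of $P_0,\dots,P_{n-1}$, and by construction each $e_n$ lies on some $P_m$ with $m\le n$. Therefore the family of all defined $P_n$ is a system of edge-disjoint $T$-paths in $G$ covering $\bigcup_{t\in T}\delta(t)$, which is precisely the conclusion of Theorem \ref{t:LCh1 inf} under the hypothesis of Corollary \ref{cor: SpecialC}.

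I do not expect any genuine obstacle here: all the substance is in the special case of Claim \ref{cl: 1path make} (just proved) and in the reduction to countable graphs (already available). The only point requiring care is the bookkeeping that removing one $T$-path at a time preserves all ingredients of the hypothesis simultaneously --- inner Eulerianness (Observation \ref{o: 1path delete}), the linkability condition (Claim \ref{cl: 1path make}), and the degree bound on $T-s$ (monotonicity of degree under edge deletion).
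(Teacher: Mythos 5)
Your argument is correct and follows the paper's route: the paper obtains the corollary in exactly this way, by iterating the just-proved special case of Claim \ref{cl: 1path make}, with the reduction to countable graphs from Subsection \ref{subs: countable} making the plain $\omega$-recursion sufficient. Your only addition is to spell out that the countable $c$-closed pieces $G_\xi$ inherit the degree bound on $T-s$ (so the reduction specialises to the corollary's hypothesis), a point the paper leaves implicit in the phrase ``applying this iteratively''.
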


\begin{prop}\label{p: key}
Assume that $ G=(V,E,I) $ is an inner Eulerian graph w.r.t. $ T\subseteq V $ and there is an $ s\in T $ such that there is no 
non-trivial $s$-wave. Then for every $f, h\in E $,  the linkability condition holds for $ s $ in $G-f-h$.
\end{prop}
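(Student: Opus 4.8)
The plan is to start from a full linkage for $s$ in $G$, delete from it the at most two paths that meet $f$ or $h$, and repair the resulting small defect by at most two augmenting steps; the inner Eulerian hypothesis is what forbids the repair from getting stuck.

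First I would use Corollary~\ref{cor: delta(s) coverable}: since there is no non-trivial $s$-wave, there is a system $\mathcal P$ of edge-disjoint $s(T-s)$-paths covering $\delta_G(s)$, and moreover $\mathfrak C_G(s,T-s)=\{\delta_G(s)\}$. Put $G':=G-f-h$ and let $\mathcal Q_0$ be $\mathcal P$ with the (at most two) paths through $f$ and through $h$ deleted. Then $\mathcal Q_0$ is a system of edge-disjoint $s(T-s)$-paths living in $G'$, and $\delta_{G'}(s)\setminus\delta_{\mathcal Q_0}(s)$ has at most two elements (it is $\{b_f,b_h\}\setminus\{f,h\}$, where $b_f,b_h$ are the $s$-ends of the deleted paths). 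Now apply Lemma~\ref{l: aug path} repeatedly, with the vertex set $T-s$ in the role of $t$, starting from $\mathcal Q_0$: each application either enlarges the current footprint at $s$ by a single edge of $\delta_{G'}(s)$ — which, since footprints only grow, must be one of the $\le 2$ edges of $\delta_{G'}(s)\setminus\delta_{\mathcal Q_0}(s)$ — or exhibits an $s(T-s)$-cut $C$ orthogonal to the current system $\mathcal Q$. After at most two enlargements the footprint equals $\delta_{G'}(s)$ and the (edge-disjoint) system witnesses the linkability condition for $s$ in $G'$. So it remains to rule out the orthogonal-cut alternative while the footprint is still a proper subset of $\delta_{G'}(s)$.

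Suppose then that $\mathcal Q$ is orthogonal to $C=\delta_{G'}(Y)$, $Y$ the $s$-side, while $U:=\delta_{G'}(s)\setminus\delta_{\mathcal Q}(s)$ is nonempty of size $\le 2$. Being an Erdős–Menger cut, $C$ is minimal, so $G'[Y],G'[V\setminus Y]$ — hence $G[Y],G[V\setminus Y]$ — are connected, $\delta_G(Y)$ is a minimal $s(T-s)$-cut of $G$, $Y\cap T=\{s\}$, so $X:=Y\setminus\{s\}\subseteq V\setminus T$, and $X\neq\varnothing$ since $C\neq\delta_{G'}(s)$ (orthogonality would otherwise force $\delta_{\mathcal Q}(s)=\delta_{G'}(s)$). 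A quick observation: every edge of $U$ has its non-$s$ endpoint in $Y$ — otherwise it lies in $C$, hence is used by, and so is the first edge of, some path of $\mathcal Q$, contradicting that it is not in $\delta_{\mathcal Q}(s)$. Thus $U\subseteq E_G(s,X)$. Assume for now $d_G(s)<\infty$. The $d_G(s)$ edge-disjoint paths of $\mathcal P$ each cross $\delta_G(Y)$, so $d_G(Y)\ge d_G(s)$; equality would mean $\mathcal P$ is orthogonal to $\delta_G(Y)$, forcing $\delta_G(Y)\in\mathfrak C_G(s,T-s)=\{\delta_G(s)\}$ and $Y=\{s\}$, impossible; hence $d_G(Y)\ge d_G(s)+1$. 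On the other hand orthogonality gives $|C|=|\mathcal Q|=|\delta_{\mathcal Q}(s)|=|\delta_{G'}(s)|-|U|$, which combined with $|C|=d_G(Y)-|\{f,h\}\cap\delta_G(Y)|$ and $|\delta_{G'}(s)|=d_G(s)-|\{f,h\}\cap\delta_G(s)|$ yields
\[
d_G(Y)-d_G(s)=|\{f,h\}\cap\delta_G(Y)|-|\{f,h\}\cap\delta_G(s)|-|U|\le 2-0-1=1 .
\]
Therefore $d_G(Y)-d_G(s)=1$, and since $Y$ is the disjoint union of $\{s\}$ and $X$ we have $d_G(Y)=d_G(s)+d_G(X)-2\,|E_G(s,X)|$, so $d_G(X)=2\,|E_G(s,X)|+1$ — an odd natural number, contradicting that $G$ is inner Eulerian with respect to $T$. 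This contradiction shows the repair never stalls, which proves the proposition when $d_G(s)$ is finite.

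The case $d_G(s)=\aleph_0$ is the delicate one, and I expect it to be the main obstacle: then every cut appearing in the counting has size $\aleph_0$, so the inequality $d_G(Y)\ge d_G(s)+1$ and the cancellation of $|U|$ become vacuous, and the argument must be made structural rather than quantitative. Concretely, one still knows that $\mathcal P$ cannot be orthogonal to $\delta_G(Y)$ (as $\delta_G(Y)\neq\delta_G(s)$ is not an Erdős–Menger cut of $G$, by $\mathfrak C_G(s,T-s)=\{\delta_G(s)\}$); the task is to combine this with the loss of at most two edges at $s$ to exhibit a \emph{finite} subset of $X$ with odd boundary. An alternative route is to contract the far side $V\setminus Y$ of an obstructing cut to a single vertex $b$ absorbing $T-s$: the quotient $\hat G$ is still inner Eulerian with respect to $\{s,b\}$, still has no non-trivial $s$-wave, has $s$ linkable, but $s$ is not linkable in $\hat G$ minus the at most two surviving images of $f,h$; when $V\setminus Y$ has at least two vertices this quotient is strictly smaller than $G$, and the residual two-terminal situation $T=\{s,b\}$ (where $\mathfrak C_G(s,b)=\{\delta_G(s)\}$ is forced to ``move'' to $\mathfrak C_{G-f-h}(s,b)=\{\delta_{G-f-h}(b)\}$) is again closed by the parity computation, now applied to the set $V\setminus T$. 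Carrying this out cleanly — in particular making sure the reduction genuinely decreases complexity and pinning down the infinite two-terminal case — is where the real work lies.
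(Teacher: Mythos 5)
Your finite-degree argument is correct, and it is essentially the parity observation of Lovász that the paper itself records as disposing of the finite case: if the augmentation stalls at a cut $\delta_{G-f-h}(Y)$ orthogonal to the current system, with $Y\cap T=\{s\}$ and $Y\neq\{s\}$, then $d_G(Y)\geq d_G(s)+1$ (otherwise the initial segments of $\mathcal{P}$ up to that cut form a non-trivial $s$-wave), while the loss of at most two edges together with at least one missing footprint edge forces $d_G(Y)\leq d_G(s)+1$, and the identity $d_G(Y)=d_G(s)+d_G(Y\setminus\{s\})-2\left|E_G(s,Y\setminus\{s\})\right|$ then yields an odd finite $d_G(Y\setminus\{s\})$, contradicting inner Eulerianness. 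But this counting needs $d_G(s)$, and with it every cardinality in your display, to be finite. The case $d_G(s)=\aleph_0$ is not a side case that can be deferred: it is the entire content of Proposition \ref{p: key} (the paper states explicitly that the finite case follows at once from Lovász's observation and that the infinite case ``is more involved''), and your proposal leaves exactly that case as an admitted sketch.

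The sketched repair does not close the gap. Contracting the far side of the obstructing cut to a vertex $b$ produces a two-terminal instance with $d(s)=d(b)=\aleph_0$ in which the parity computation is again vacuous, and there is no well-founded measure along which this reduction makes progress, so it cannot be iterated into a proof; you acknowledge this yourself. What the paper does at this point is structural rather than quantitative: it first shows (Lemma \ref{l:tight cut}) that an obligatory edge forces an \emph{$s$-tight} Erdős-Menger cut $C$ through it, applies this in $G-f$ with $h$ to get such a cut $C\ni h$ whose $s$-side $S$ also meets $f$ (else a non-trivial $s$-wave appears), replaces everything outside $S$ by new degree-one terminals $t_e$ for $e\in\delta_G(S)$, invokes the already-proved special case in which all terminals but one have degree at most one (Corollary \ref{cor: SpecialC}) to obtain a covering system $\mathcal{Q}$ of $T'$-paths in this auxiliary graph, and then plays $\mathcal{Q}$ against the $s$-tightness of $C$ via Lemma \ref{l: icomp} and two further uses of the augmenting path lemma \ref{l: aug path} to reach a contradiction. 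None of this machinery -- tight cuts, the auxiliary graph with degree-one terminals, Lemma \ref{l: icomp} -- has a counterpart in your proposal, and without a substitute for it the infinite case, i.e.\ the proposition itself, remains unproved.
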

\begin{proof}
We may assume without loss of generality that $G$ is connected, since only the component containing $ s $ is relevant. Since 
deletion of edges in $ \delta(s) $ makes the linkability for $ s $ a weaker requirement, we can also assume that $f, h\in E \setminus 
\delta(s)$. If $G$ is 
finite and $X\subseteq V$ with $X\cap T=\{s\}$, then 
$d(s)$ and $d(X)$ must have the same parity because $d(v)$ is even for $v\in X-s$. This observation of Lovász led immediately 
to the justification of Proposition
\ref{p: key} for finite graphs. Indeed, on the one hand, $ d(s)<d(X) $ if $ \{ s \}\subsetneq X \subseteq V\setminus (T-s) $, since 
$ \delta(s) $ is the only Erdős-Menger $ s(T-s) $-cut by assumption. On the other hand, the same 
parity of $ d(s) $ and $ d(X) $   ensures
$d(s)+2 \leq d(X)$.  The proof of Proposition \ref{p: key} for infinite graphs is more involved and we need 
some preparation. 

For a graph $H$ and distinct $s,t\in V(H)$, we call an 
 Erdős-Menger $st$-cut $C$ $s$\emph{-tight} if there is system $\mathcal{P}$ of edge-disjoint paths in $H$ between $s$ and 
 $t$ covering $\delta_H(s)$ and  every such a path-system is orthogonal to $ C $.

\begin{lem}\label{l:tight cut}
Assume that $H$ is a graph, $s,t\in V(H)$ are distinct and there is a system $\mathcal{P}$   of edge-disjoint paths in $H$ 
between $s$ and $t$ covering $\delta_H(s)$ and  there is an $e\in E(H)\setminus 
\delta_H(s)$  such that $ e\in E(\mathcal{P}) $ for every such  path-system. 
Then there exists an $s$-tight Erdős-Menger $ st $-cut $C$  containing $e$.
\end{lem}
\begin{proof}
We may assume that $ H $ is connected, since otherwise we consider the component containing $ s $ and $ t $. Let 
$\mathcal{P}$  and $ e $ be as in the lemma. Then there is a unique $ P_e\in \mathcal{P} $ through $ e $. If  $H-e  $ is 
disconnected, 
then  we must have $  \mathcal{P}=\{ P_e \} $ and the cut  $ C:=\{ e \} $ is as desired. Suppose that $H-e $ is connected. Let $ 
D $ be 
the $ \preceq 
$-smallest 
Erdős-Menger $ st $-cut in $ H-e $ (see Lemma \ref{l: EMlattice}). We are going to prove that $ C:=D+e $ is as desired. To 
do so, it is enough to show that 
 $ \mathcal{Q}:= \mathcal{P}\setminus \{ P_e \} $  is orthogonal to $ D $. Indeed, if this holds, then $ e $ 
must connect the two parts of  the cut $ D $ in $ H-e $  and therefore $ D+e $ is an $ st $-cut in $ H $ and  $ \mathcal{P} $ is  
orthogonal to it. 

Suppose for a contradiction that $ \mathcal{Q} $  is not orthogonal to $ D $. 

\begin{figure}[H]
\centering
\begin{tikzpicture}
\node (v1) at (-2,0) {$s$};
\draw  (0,1.4) rectangle (2.8,-1.6);
\node (v2) at (2,0) {$t$};
\node at (-0.5,1) {$D$};
\draw  plot[smooth, tension=.7] coordinates {(v1) (-1.6,-0.8) (-0.6,-1.2) (0.4,-1.2) (0.4,-0.6) (-0.4,-0.4) (0.4,0) (1,-0.4) (v2)};
\draw  plot[smooth, tension=.7] coordinates {(-2,0) (-1,0.2) (-0.4,0) (0.4,0.6) (1.2,0.4) (2,0)};
\draw  plot[smooth, tension=.7] coordinates {(-2,0) (-1.8,0.6)};
\node (v4) at (0.5,-0.8) {};
\node (v3) at (-0.4,-0.8) {};
\draw  (v3) edge (v4);
\node (v5) at (-0.4,0.6) {};
\node (v6) at (0.5,1.2) {};
\draw  (v5) edge (v6);
\node at (0.8,0.2) {$\mathcal{Q}$};
\node at (-1.6,0.4) {$e'$};
\end{tikzpicture}
\caption{Graph $ H-e $ where $ D $ is not orthogonal to $ \mathcal{Q} $. The first edge of $ P_e $ is $ e' $.}\label{fig tight}
\end{figure}
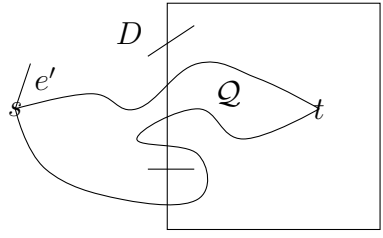 
Let $ H' $ be the graph we get by contracting the $ t $-side 
of $ D $ to $ t $ in $ H $. Then $ 
D=\delta_{H'-e}(t) $ and it is the  only element of $ \mathfrak{C}_{H'-e}(s,t) $ since it is the smallest but also the largest 
one. 
We apply the  Augmenting path lemma \ref{l: aug path} in $ H'-e $ with $ s,t $ and the set $ \mathcal{Q}' $ of $ st $-paths in 
$ 
H'-e $ given by the initial segments of the paths in $ \mathcal{Q} $. The augmentation must be 
successful, since otherwise it would give a $ D'\in \mathfrak{C}_{H'-e}(s,t) $ with $ D'\neq D  
$. Indeed, 
$ D\setminus E(\mathcal{Q}') \neq \varnothing$ by the indirect assumption but $ D'\subseteq E(\mathcal{Q}')$ according to 
Lemma \ref{l: aug path}.  The successful augmentation provides a system 
$ \mathcal{Q}'' $ of edge-disjoint $ st $-paths in $ H'-e $ covering $ \delta_{H'-e}(s) $. Indeed,  there is a unique $ e'\in 
\delta_{H'-e}(s) $ which is uncovered by $ \mathcal{Q}' $, namely the first edge of $ P_e $, but the  Augmenting path lemma 
\ref{l: aug path} ensures $\delta_{\mathcal{Q}'}(s)  \subset\delta_{\mathcal{Q}''}(s)  $.  Since $ D\in 
\mathfrak{C}_{H-e}(s,t) $, the 
paths in $ \mathcal{Q}'' $ can be forward extended in $ H $ to obtain a system 
 of edge-disjoint $ st $-paths in $ H-e $ covering $ \delta_H(s) $  contradicting the obligatory usage of $ e $ in the assumption of 
 the lemma. 
\end{proof}

Since the only Erdős-Menger $s(T-s)$-cut is $ \delta(s) $ (see Corollary \ref{cor: delta(s) coverable}) and $f\notin \delta(s)$, 
Lemma 
\ref{l:tight cut} (applied in $ G/(T-s) $) ensures that there is a system 
$\boldsymbol{\mathcal{P}_s}$ of edge-disjoint paths in $G-f$ between $s$ and $T-s$ covering $\delta(s)$. Suppose for a 
contradiction that 
such a path-system cannot be found in $G-f-h$. By applying Lemma \ref{l:tight cut} again this time with $G-f$ and $ h $, we 
obtain 
an 
$s$-tight Erdős-Menger
$s(T-s)$-cut $C$ in $ G-f $ containing $h$. Let $S$ be the $s$-side of the cut $ C $. Then  $\delta_{G-f}(S)=C$ and we must 
have 
$f\in 
\delta_{G}(S) $ since otherwise the initial segments of the paths in $\mathcal{P}_s$ up to their unique edge in $C$ would form a 
non-trivial $s$-wave with respect to $G$ and $T$.  Thus $ f, h\in \delta(S)\setminus \delta(s) $.   We  
define $ G' $ by
extending $G[S]$ with new vertices $\{t_e: e\in \delta_G(S)\}$ and with the edges $ \delta(S) $ where an $ e\in \delta(S) $ keeps 
its original end-vertex in $ S $ and gets $ t_e $ as the other end-vertex. Let $T':=\{s\}\cup \{t_e: e\in \delta_G(S)\}$.

\begin{figure}[H]
\centering
\begin{tikzpicture}

\draw  (0.8,0) node (v10) {} ellipse (2.2 and 1.8);
\node (v9) at (0.8,-1) {$s$};
\node (v1) at (-1.6,1.8) {$t_f$};
\node (v2) at (-0.6,0.6) {};

\node (v3) at (-0.2,2.4) {$t_{e_0}$};
\node (v4) at (0.2,1) {};
\node (v5) at (0.6,2.6) {$t_{e_1}$};
\node (v6) at (1,1.2) {};
\node (v7) at (1.8,2.6) {$t_{e_2}$};
\node (v8) at (1.8,1) {};

\node at (0.2,-0.2) {$Q_f$};
\node at (0.6,1) {$ Q_{\neg s} $};

\draw  plot[smooth, tension=.7] coordinates {(v9) (v10) (v2) (v1)};
\draw  plot[smooth, tension=.7] coordinates {(v3) (0,1.4) (0,1) (0,0.6) (0.4,0.6) (0.8,0.6) (v6) (0.8,1.6) (v5)};
\draw  plot[smooth, tension=.7] coordinates {(v9) (1.4,-0.8) (1.8,-0.2) (1.6,0.4) (1.6,1.2) (v7)};

\node at (-1.8,0) {$S$};
\node at (-0.8,1.8) {$C+f$};
\end{tikzpicture}
\caption{Graph $ G' $ and path-system $ \mathcal{Q} $.}\label{fig inc}
\end{figure}
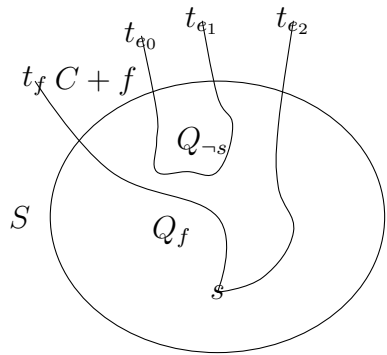 

 For $ 
X\subseteq V(G')\setminus T' $, the cardinal  $ 
d_{G'}(X) $ 
cannot be an odd number  because $ 
d_{G'}(X)=d_G(X) $ by construction. 
Moreover, the linkability condition with respect to $ G' $ and $ T' $ holds, since for $ s $ it is witnessed by the initial segments of 
the paths in   $\mathcal{P}_s$ while the connectivity of  $ G' $ guarantees it for the vertices in $  T'-s $. Thus the premise of 
Theorem 
\ref{t:LCh1 inf} are satisfied, furthermore, every vertex in $T'$ except possibly $s$ has degree $ 1 $. By applying Corollary 
\ref{cor: SpecialC} to $ G' $ and $ 
T' $, we can take a system $\boldsymbol{\mathcal{Q}}$ of $T'$-paths 
in $G'$ covering all the edges $ \bigcup_{t\in T'}\delta_{G'}(t) $. 

It cannot happen that all $ Q\in \mathcal{Q} $ have $ s $ as an 
end-vertex because then $\mathcal{Q}$ would provide a non-trivial $ s $-wave with respect to $ G $ and $ T $ (where $ f\notin 
\delta(s) $ is used to ensure `non-trivial').    Let $ Q_{\neg 
s}\in 
\mathcal{Q} $ be a path with $ s\notin V(Q_{\neg s}) $  and let us denote the end-vertices of $ Q_{\neg s} $ by $\boldsymbol{ 
t_{e_0}} $ and 
$  \boldsymbol{t_{e_1}} $. 

\begin{lem}\label{l: icomp}
Every 
system $ \mathcal{R}' $ of edge-disjoint $ T' $-paths in $ G' $ covering $ \delta(s) $ and avoiding $ t_f $ must use all the 
vertices $ \{t_e: e\in C\}  $. 
\end{lem}
\begin{proof}
Suppose for a contradiction that $ \mathcal{R}' $ is a counterexample. We may assume that each path in $ \mathcal{R}' $ starts 
at $ s $ since otherwise we remove the rest. Let $ \mathcal{R} $ be the path-system in $ G-f $ 
corresponding to $ 
\mathcal{R}' $. Then $ \mathcal{R} $  is a system of edge-disjoint paths starting at $ s $ and having exactly their last edges in $ 
C $ such that $ C\setminus E(\mathcal{R})\neq \varnothing$. Since $ C $ is an Erdős-Menger $ s(T-s) $-cut in $ G-f $, the 
paths in 
$ \mathcal{R} $ can be forward extended to obtain a system $ \mathcal{R}^{+} $ of edge-disjoint $ s(T-s) $-paths with 
$ C\cap E( \mathcal{R}^{+})=C\cap E( \mathcal{R}) $. Then $ \mathcal{R}^{+} $ also covers $ \delta(s) $  and $ 
C\setminus E( \mathcal{R}^{+})\neq \varnothing $ contradicting the $ s $-tightness of $ C $ in $ G-f $.
\end{proof}

   There must be a $ Q_f\in \mathcal{Q} $  with $ t_f\in V(Q_f) $ since otherwise $\mathcal{R}'= \mathcal{Q}-Q_{\neg s} $ 
   contradicts Lemma \ref{l: icomp}. We claim that the other end-vertex of $ Q_f$  
   must be $ s 
   $,  thus in particular $ Q_f\neq Q_{\neg s} $ and hence $ f\notin \{ e_0, e_1 \} $. Indeed, since otherwise the system  
   $\mathcal{Q}_s:=\{Q\in \mathcal{Q}: s\in 
   V(Q)\}$ of 
edge-disjoint $ T' $-paths in $ G' $ covers $ \delta(s) $ using neither $ t_f $ nor the other end-vertex of $ Q_f $ which contradicts 
Lemma \ref{l: icomp}. Now we consider the path-system $ \mathcal{Q}_s \setminus \{ Q_f \} $. It covers all but one edges in $ 
\delta(s) $ and 
avoids $ e_0 $ and $ e_1 $.  We apply the Augmenting path lemma \ref{l: aug path} in $ G' $ with $ \mathcal{Q}_s \setminus \{ 
Q_f \}, s 
$ and  $ \{ 
t_e: e\in C \} $. If the augmentation is successful, then the resulting path-system covers $ \delta(s) $ and at least one of $ e_0 $ 
and $ e_1 $ is 
still unused contradicting Lemma \ref{l: icomp}. Thus the Augmenting path lemma ensures that 
we can pick a single edge from each path in $ \mathcal{Q}_s \setminus \{ Q_f \} $ such that the resulting edge set $ C' $ 
separates $ s $ and $ \{ 
t_e: e\in C \} $ in $ G' $. We take the initial segments of the paths in $ \mathcal{P}_s $ until the first meeting with $ C' $ and 
continue them 
forward using the terminal segments of the corresponding paths from $ \mathcal{Q}_s \setminus \{ Q_f \} $ to obtain a set of $ T' 
$-paths in $ G' $ covering $ \delta(s) $ without using $ t_{e_0}, t_{e_1} $ and $ t_f $, which contradicts Lemma \ref{l: icomp}.
\end{proof}

Now we can finish the proof of Claim \ref{cl: 1path make}. Suppose for a contradiction that $G, T, s\in T$ and $e_0\in \delta(s)$ 
form a 
counterexample and $\mathcal{P}_s=\{P_e: e\in \delta(s)\}$ is a system of edge-disjoint $T$-paths with $e\in E(P_e) $. We 
assume that $G, T, s, e_0$ and  $\mathcal{P}_s$ have been chosen to minimize $\left|E(P_{e_{0}})\right|$ among the possible 
options. We apply wave 
elimination with $ 
T-s $ choosing  each wave according to Corollary \ref{cor: Pym cor}. Consider the resulting $ G' 
$ and observe that $ G', T, s, e_0 $ must be also a counterexample (see Figure \ref{fig welim}). For $ e\in \delta(s)$, let  $ 
P'_e $ be the longest initial 
segment of $ P_e $ from $ s $ that lives in $ G' $. Then we must have $ P_{e_0}=P'_{e_0} $  since otherwise    
$\mathcal{P}'_s=\{P'_e: 
e\in \delta(s)\}$ and $ \left|E(P'_{e_0})\right|<\left|E(P_{e_0})\right| $ contradicts the choice  of $ G, T, s $ 
and $ e_0 $. 
Note that $\left|E(P_{e_{0}})\right| \geq 2$ 
because if $P_{e_{0}}$  consisted of  the 
single edge 
$e_0$, then $P:=P_{e_{0}}$ would satisfy Claim \ref{cl: 1path make} for $ G', s$ and $ e_0 $. 

 Let $f_0\in 
E(P_{e_0})$ be the edge right after 
$e_0$ in $ P_{e_0} $. We replace in $ G' $ the edges $e_0$ and $f_0$ by a single new edge $h_0$ connecting $s$ 
and the end-vertex of $f_0$ that is not shared with $e_0$ (splitting technique by Lovász from \cite{lovasz1976some}). Let 
$P_{h_0}$ be the path in the 
resulting graph $ G'' $ with $E(P_{h_0})=E(P_{e_0})-e_0-f_0+h_0$ and let us define 
$\mathcal{P}_s'':=\mathcal{P}_s'-P_{e_0}+P_{h_0}$.
 For $ X\subseteq V\setminus T $ the quantities $ d_{G'}(X) $ and $ d_{G''}(X) $ are either both infinite or they have the 
 same 
 parity, 
 thus $ G'' $ is also inner Eulerian w.r.t. $ T $. The linkability condition for $ s $  in $ G'' $ is witnessed by $ \mathcal{P}_s'' 
 $. 
 Let $ 
 t\in T-s $ be arbitrary. The linkability condition for $ t $ holds  in $ G'-e_0-f_0 $ by 
 Proposition \ref{p: key}, moreover, if $ h_0\in \delta_{G''}(t)  $, then $ h_0 $ is an edge between $ s $ and $ t $  and hence 
 a $ T 
 $-path  itself. Thus the linkability condition holds in $ G'' $.
Note that $G'', T, s$ and $h_0$ cannot be a counterexample for Claim \ref{cl: 1path make} 
because $\left|E(P_{h_{0}})\right|=\left|E(P_{e_{0}})\right|-1 $. Therefore we can pick some $ T $-path $ P'' $ in $ G'' $ 
through 
$ h_0 $ such that the linkability condition holds in $ G''-E(P'') $. Let us take then a $ T $-path $ P' $ in $ G' $ through 
$ e_0 $ with $ E(P)\subseteq E(P'')-h_0+e_0+f_0 $. Since  $ G''-E(P'') $ is a subgraph of $ G'-E(P') $ and $ 
\delta_{G'-E(P')}(t)=\delta_{G''-E(P'')}(t) $ holds for $ t\in T $, the linkability condition in $ G''-E(P'') $ implies the linkability 
in $ G'-E(P') $. This contradicts the fact that $ G', T, s $ and $ e_0 $ form a counterexample for Claim \ref{cl: 1path make}.
\end{nbiz}
\end{proof}

\section{Open questions}\label{sec: oultook}
First of all, we expect that in Theorem \ref{t:LCh inf} the restriction about the size of $ T $ can be completely omitted:  
\begin{conj}\label{conj: LCh}
Let $G$ be a  graph and let $T\subseteq V(G)$   such that  there is no $ X\subseteq V(G)\setminus 
T $ 
where $ d_G(X) $ is an odd natural number. Then there exists a system 
$\mathcal{P}$ of edge-disjoint $T$-paths such that for every $t\in T$: one can choose exactly one edge from each $P\in 
\mathcal{P}$ having $t$ as an end-vertex in such a way that the resulting edge set $C$ is a cut separating $t$ and $T-t$. 
\end{conj}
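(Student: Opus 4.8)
The plan is to keep the two reductions from Section~\ref{sec: proofLovCHer} that do not use countability and to replace only the one that does. The wave-elimination argument of Subsection~\ref{subs: waveelim} is carried out for an arbitrary enumeration $T'=\{t_\xi:\xi<\kappa\}$ and never uses $\kappa\le\aleph_0$, so it reduces Conjecture~\ref{conj: LCh} to the linkability statement, i.e.\ to Theorem~\ref{t:LCh1 inf} with the hypothesis ``$d(t)\le 1$ for all but countably many $t$'' deleted. Moreover Claim~\ref{cl: 1path make} is already established for arbitrary graphs: the only place where countability entered was the concluding ``straightforward recursion'', an $\omega$-step peeling of the countably many edges of $\bigcup_{t\in T}\delta(t)$. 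So everything comes down to performing that peeling transfinitely; as in Subsection~\ref{subs: countable} we may assume $T$ spans no edge.

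I would run a transfinite induction on $\kappa:=|E(G)|$, the base case $\kappa\le\aleph_0$ being exactly the countable case proved here (note that $E$ countable forces $\{t:d(t)>1\}$ countable). For $\kappa>\aleph_0$ fix a large regular $\theta$ and a continuous $\in$-increasing chain $\langle M_\alpha:\alpha<\kappa\rangle$ of elementary submodels of $(H(\theta),\in)$ with $G,T,\mathcal{O}$ and $\langle\mathcal{P}_t:t\in T'\rangle$ in $M_0$, with $|M_\alpha|<\kappa$, $\kappa\cap M_\alpha\in\kappa$, and $E\subseteq\bigcup_\alpha M_\alpha$. Put $G_\alpha:=G[E\cap M_\alpha]$ and $T_\alpha:=T\cap M_\alpha$. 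Since $\mathcal{O}$ is in $M_0$, partitions $E$, and each edge lies in a unique (finite) member of it, the family $\{O\in\mathcal{O}:O\subseteq M_\alpha\}$ partitions $E\cap M_\alpha$ into cycles avoiding $T_\alpha$ and $T_\alpha$-paths, so $G_\alpha$ is still inner Eulerian w.r.t.\ $T_\alpha$; and for $e\in E\cap M_\alpha$ incident with some $t\in T'\cap M_\alpha$, the unique $P\in\mathcal{P}_t$ through $e$ is definable from $\mathcal{P}_t,e\in M_\alpha$, hence lies in $M_\alpha$ and, being finite, satisfies $P\subseteq M_\alpha$; thus the families $\mathcal{P}_t\cap M_\alpha$ witness the linkability condition in $G_\alpha$. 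Hence each $G_\alpha$ satisfies the hypotheses and $|E(G_\alpha)|<\kappa$, so by the induction hypothesis it carries a system $\mathcal{Q}_\alpha$ of $T_\alpha$-paths covering $\bigcup_{t\in T_\alpha}\delta_{G_\alpha}(t)$ together with the required orthogonal cuts.

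It remains to assemble these into a single path-system for $G$, processing the layers $E\cap(M_{\alpha+1}\setminus M_\alpha)$ one at a time. The intended invariant after stage $\alpha$ is a system $\mathcal{Q}^{<\alpha}$ of $T$-paths inside $G_\alpha$ covering $\bigcup_{t}\delta_{G_\alpha}(t)$ with, for each $t$, a chosen orthogonal cut separating $t$ from $T-t$ in $G_\alpha$; at a successor step one would route new $T$-paths through the newly appeared edges of $G_{\alpha+1}$ incident with $T$, edge-disjointly from $\mathcal{Q}^{<\alpha}$, merging the solution $\mathcal{Q}_{\alpha+1}$ of $G_{\alpha+1}$ with $\mathcal{Q}^{<\alpha}$ by a patching argument in the spirit of Theorem~\ref{t: Pym}, and then enlarging each cut inside $G_{\alpha+1}$. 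The main obstacle — and the reason the uncountable case is genuinely hard — is the limit stages: the naive ``union'' of the orthogonal cuts $C_t^\alpha$ need not be a cut of $G$, and, more seriously, deleting the infinitely many edge-disjoint $T$-paths accumulated by a limit $\lambda$ can \emph{destroy} the inner-Eulerian condition (an infinite edge-disjoint family of $T$-paths, each meeting a boundary $\delta(X)$ in an even set, can cover all but a finite odd subset of an infinite $\delta(X)$), so Claim~\ref{cl: 1path make} is no longer available in $G-E(\mathcal{Q}^{<\lambda})$. Making the construction ``continuous'' — so that the inner-Eulerian and linkability conditions and the orthogonal cuts all pass to the limit — is the crux, and I expect it to require, as with Aharoni's infinite König theorem and the countable matroid intersection conjecture discussed in the introduction, a genuinely new idea beyond this book-keeping: presumably a strengthened invariant, in the spirit of strong maximality (cf.\ Conjecture~\ref{conj: strongly maxim}), that is automatically preserved under transfinite unions.
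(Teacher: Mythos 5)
This statement is not proved in the paper at all: it appears there as Conjecture~\ref{conj: LCh}, i.e.\ as an explicitly open problem (the paper only proves the case where $T$ is countable, and the author remarks that removing that hypothesis is expected to be ``significantly harder''). Your proposal, to its credit, is honest about this: the final paragraph concedes that the limit stages of the transfinite assembly are unresolved and that ``a genuinely new idea'' is needed. That concession is the gap. Everything before it is book-keeping that reduces the conjecture to exactly the difficulty the paper already isolates --- deleting an infinite edge-disjoint family of $T$-paths at a limit ordinal can destroy both the inner-Eulerian condition (your parity example on an infinite $\delta(X)$ is correct) and the linkability condition, and the orthogonal cuts chosen at earlier stages need not cohere into cuts of $G$. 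No invariant is proposed that survives limits, so the induction on $|E(G)|$ never gets off the ground past $\aleph_0$. In short: this is a plausible reduction plus a correct diagnosis of the obstruction, not a proof.

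One smaller inaccuracy worth flagging: you assert that Claim~\ref{cl: 1path make} ``is already established for arbitrary graphs'' and that countability only enters in the final recursion. But the proof of Claim~\ref{cl: 1path make} invokes Corollary~\ref{cor: SpecialC} for the auxiliary graph $G'$ with terminal set $T'=\{s\}\cup\{t_e:e\in\delta_G(S)\}$, and Corollary~\ref{cor: SpecialC} is itself obtained by ``applying this iteratively'' --- an iteration over $\delta(s)\cup\bigcup_{t\in T'-s}\delta(t)$, which is uncountable when $\delta_G(S)$ is. So even the single-path extraction step silently faces the same limit-stage problem (does the rerouted $\mathcal{P}_s$ still witness linkability after removing uncountably many paths?) before you ever reach the global assembly. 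If you want to pursue this, you should first verify Corollary~\ref{cor: SpecialC} transfinitely, and then, as you suggest, look for a strengthened, limit-stable invariant in the spirit of strong maximality; that is precisely what the analogous results of Aharoni cited in the introduction required.
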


We conjectured already in the Introduction (Conjecture \ref{conj: strongly maxim}) the existence of  strongly maximal systems 
 of $ T $-paths with different concepts of disjointness. We believe that strong maximality can be characterized by the existence of 
 a certain dual object reflecting the corresponding classical theorems of Gallai \cite{gallai1964maximum} and Mader 
 \cite{mader1978maximalzahl, mader1978maximalzahlH}.

\subsection{Edge-disjoint T-paths in not necessarily inner Eulerian graphs}
Let $ G $ be a  graph and let $ T\subseteq V(G) $. A $ T $\emph{-partition} is a family $ \mathcal{A}=\{ X_t: t\in T \} $ of 
pairwise 
disjoint subsets of $ V(G) $ such that $ X_t\cap T=\{ t \} $.  If $ G $ is finite, then we call a component $ Y $ of $ G-\bigcup 
\mathcal{A} $ \emph{obstructive} if $ d(Y) $ is odd.  
Let $ \mathsf{o}(G, \mathcal{A}) $ 
be the number of the obstructive components.
\begin{thm}[Mader, \cite{mader1978maximalzahl}]\label{t: Mader T-path}
Let $ G $ be a finite graph and let $ T\subseteq V(G) $.   Then the 
maximal number of pairwise edge-disjoint $ T $-paths is

\[\min \left\lbrace \frac{1}{2}\left( \sum_{t\in T}d(X_t)-\mathsf{o}(G, \mathcal{A})\right) : \mathcal{A}\text{ is a 
}T \text{-partition}  
\right\rbrace.  \]
\end{thm}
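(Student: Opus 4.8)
Write \(\nu(G,T)\) for the maximum number of pairwise edge-disjoint \(T\)-paths and \(\tau(G,T)\) for the claimed minimum over \(T\)-partitions. The plan is to prove \(\nu(G,T)\le\tau(G,T)\) by a counting argument and \(\nu(G,T)\ge\tau(G,T)\) by induction on \(\left|E(G)\right|\), the second inequality being the substantial part. For the first inequality, fix a \(T\)-partition \(\mathcal{A}=\{X_t:t\in T\}\) and a family \(\mathcal{P}\) of edge-disjoint \(T\)-paths. A path \(P\in\mathcal{P}\) joining distinct \(t,t'\) starts in \(X_t\) and ends in the disjoint set \(X_{t'}\), so \(\sum_{s\in T}\left|E(P)\cap\delta(X_s)\right|\ge 2\); summing over \(\mathcal{P}\) and using edge-disjointness gives \(2\left|\mathcal{P}\right|\le\sum_{s\in T}d(X_s)\). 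This can be sharpened: every component \(Y\) of \(G-\bigcup\mathcal{A}\) is terminal-free, so each \(P\in\mathcal{P}\) enters and leaves \(Y\) in pairs and thus meets \(\delta(Y)\) in an even number of edges; when \(d(Y)\) is odd some edge of \(\delta(Y)\) is therefore left unused by \(\mathcal{P}\). Since the boundaries of distinct components are pairwise disjoint and contained in \(\bigcup_{s\in T}\delta(X_s)\), selecting one unused edge per obstructive component improves the estimate to \(2\left|\mathcal{P}\right|\le\sum_{s\in T}d(X_s)-\mathsf{o}(G,\mathcal{A})\); as \(\mathcal{A}\) was arbitrary, \(\nu(G,T)\le\tau(G,T)\). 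A parity count — writing \(\sum_{s}d(X_s)=2e(\mathcal{A})+\sum_Y d(Y)\) with \(e(\mathcal{A})\) the number of edges joining distinct parts, and observing \(\sum_Y d(Y)\equiv\mathsf{o}(G,\mathcal{A})\bmod 2\) — shows that \(\sum_{s}d(X_s)-\mathsf{o}(G,\mathcal{A})\) is always even, so \(\tau(G,T)\) is a well-defined integer.

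For \(\nu(G,T)\ge\tau(G,T)\) I would induct on \(\left|E(G)\right|\). If \(G\) has no \(T\)-path then no two terminals share a component, and taking \(X_t\) to be the component of \(t\) yields a \(T\)-partition of value \(0=\nu(G,T)\). Otherwise I first perform reductions that strictly decrease \(\left|E(G)\right|\) while leaving both \(\nu\) and \(\tau\) unchanged: delete a non-terminal vertex of degree \(0\) or \(1\) together with its incident edge (no \(T\)-path uses such an edge), and suppress a non-terminal vertex of degree \(2\) by replacing its two edges with a single edge joining their other endpoints (discarding a loop if one arises). Invariance of \(\nu\) is immediate; invariance of \(\tau\) is a finite case analysis according to where the affected vertex and its neighbours sit relative to an optimal \(T\)-partition, which I would isolate as a lemma. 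Once no reduction applies, every non-terminal vertex has degree at least \(3\), and the heart of the proof is a splitting-off step in the spirit of Lovász and Mader: picking a non-terminal vertex \(v\) lying inside a component of a \(\tau\)-optimal partition together with a pair of edges at \(v\) that can be split off (replaced by a single edge between their other endpoints) without changing \(\tau(G,T)\), applying the induction hypothesis to the smaller graph, and lifting the resulting \(T\)-paths back through \(v\).

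The \emph{main obstacle} is precisely the existence of such a splittable pair; this is the deep content of Mader's theorem, and it is here that the parity hypothesis (encoded via \(\mathsf{o}(G,\mathcal{A})\)) is genuinely used, through an uncrossing/submodularity analysis of the cuts \(\delta(X)\) with \(X\cap T=\{t\}\). The inner Eulerian case of the statement is exactly Theorem \ref{t:LCh}, which may be taken as known: there, contracting \(T\) produces an Eulerian graph by Theorem \ref{t: NW}, and Lovász's observation that \(d(X)\equiv d(t)\bmod 2\) whenever \(X\cap T=\{t\}\) forces any cut separating \(t\) from \(T-t\) that is not of minimum size to exceed the minimum by at least \(2\); combined with Menger's theorem this already gives the lower bound, once the cuts witnessing the various \(\lambda_G(t,T-t)\) are uncrossed into a single \(T\)-partition so that the value \(\tfrac12\sum_{t}\lambda_G(t,T-t)\) of Theorem \ref{t:LCh} matches \(\tau(G,T)\). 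Extracting the general splitting-off lemma on top of this inner Eulerian skeleton is the real work and would be the last and hardest step.
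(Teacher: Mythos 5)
This statement is quoted in the paper from Mader's 1978 article and is not proved there, so there is no in-paper argument to compare against; your proposal has to stand on its own. The easy direction is fine: your counting argument for $2\left|\mathcal{P}\right|\le\sum_{t}d(X_t)-\mathsf{o}(G,\mathcal{A})$ is correct and complete (each $T$-path meets $\bigcup_t\delta(X_t)$ at least twice, each terminal-free component $Y$ is met by $\delta(Y)$-edges in pairs, the sets $\delta(Y)$ are pairwise disjoint and contained in $\bigcup_t\delta(X_t)$ with each such edge counted exactly once there, so one unused edge per odd-degree component can be added to the count), and the parity computation showing the minimum is an integer is also correct.

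The genuine gap is in the direction $\nu\ge\tau$. After the (plausible but themselves deferred) degree-$\le 2$ reductions, everything is loaded onto the assertion that at a non-terminal vertex of degree at least $3$ there exists a pair of edges that can be split off without decreasing $\tau(G,T)$ — and you say explicitly that you do not prove this. That splitting-off lemma is not a technical afterthought: it is essentially equivalent to the theorem itself (it is where Mader's uncrossing/submodularity analysis of the cuts $\delta(X)$ with $X\cap T=\{t\}$ lives, and where one must also rule out the degenerate cases in which no admissible pair exists at a given vertex and a different vertex or a different optimal partition must be chosen). A proof that names its hardest step as "the real work" and stops there is an outline, not a proof. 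Even granting Theorem \ref{t:LCh} as known handles only the inner Eulerian case, where $\mathsf{o}(G,\mathcal{A})$ vanishes identically; the whole point of Mader's theorem is the non-Eulerian case, where the correction term is active and the splitting-off argument must interact with it. To complete the argument you would need to state and prove the splittability lemma (or reproduce Mader's original induction on $\sum_{v\notin T}d(v)$), and also supply the promised lemma that suppressing a degree-$2$ non-terminal vertex leaves $\tau$ invariant, which requires a small but nontrivial case analysis when the suppression creates a loop or merges components of $G-\bigcup\mathcal{A}$.
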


Let us define $ E(\mathcal{A}):= \bigcup_{t\in T}\delta(X_t) $. In Theorem \ref{t: Mader T-path}, for a system $ \mathcal{P} $ 
of edge-disjoint  $ T $-paths and a $ T $-partition $ \mathcal{A} $ we have equality if and only if the following conditions hold:

\begin{cond}[complementary slackness]\label{cd: Mader T-pathWeak}\
\begin{enumerate}
\item Each $ P\in \mathcal{P} $ uses either  only a single edge from $ E(\mathcal{A}) $ (which must connect two vertex 
sets in $ \mathcal{A} $) or two edges incident with  a component of  $ G-\bigcup \mathcal{A} $.
\item For each component $ Y $ of $ G-\bigcup \mathcal{A} $,  the path-system $ \mathcal{P} $ uses all but at most one edge  
from 
$ \delta(Y) $.
\end{enumerate}
\end{cond} 

\begin{conj}\label{conj: complementarity Weak}
Let $G$ be a (possibly infinite) graph and let $T\subseteq V(G)$. Then there exists a system 
$\mathcal{P}$ of edge-disjoint $T$-paths such that there is a $ T $-partition $ \mathcal{A} $ satisfying Condition 
\ref{cd: Mader T-pathWeak}. 
\end{conj}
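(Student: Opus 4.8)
The plan is to deduce Conjecture~\ref{conj: complementarity Weak} from Theorem~\ref{t:LCh inf} by adapting the classical reduction of Theorem~\ref{t: Mader T-path} to the inner Eulerian case, i.e. to Theorem~\ref{t:LCh}, and then to read the required $T$-partition and the complementary slackness conditions off the Erdős-Menger cuts furnished by Theorem~\ref{t:LCh inf}. In the finite case one passes from $(G,T)$ to an inner Eulerian pair $(\hat G,\hat T)$ by adjoining a single new terminal $z$ and joining it by one edge to each non-terminal of odd degree; every vertex of $V(G)\setminus T$ then has even degree, which for finite graphs already rules out an odd cut with $X\subseteq V\setminus T$. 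So the first step would be to perform such a surgery, apply Theorem~\ref{t:LCh inf} to $(\hat G,\hat T)$ to get a system $\hat{\mathcal P}$ of edge-disjoint $\hat T$-paths together with Erdős-Menger $t(\hat T-t)$-cuts $C_t$ orthogonal to $\hat{\mathcal P}_t$, discard the paths meeting $z$, restrict the rest to $G$ to obtain $\mathcal P$, and take $X_t$ to be the trace on $V(G)$ of the $t$-side of $C_t$ for $t\in T$. One then has to verify that $\mathcal A=\{X_t:t\in T\}$ is indeed a $T$-partition and that Condition~\ref{cd: Mader T-pathWeak} is satisfied, the latter using that every $\hat T$-path meets each $C_t$ exactly once and that the components of $G-\bigcup\mathcal A$ together with the edges of the $z$-gadget account for the obstructive terms in Mader's formula.

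The step I expect to be genuinely hard is that this reduction does not survive the passage to infinite graphs. Making every non-terminal have even degree does not make an infinite graph inner Eulerian: an infinite set $X$ can have $d(X)$ an odd finite number, as the two-way infinite path shows. Hence one needs a surgery that simultaneously destroys \emph{every} odd cut of $G/T$, and it is unclear how to do this while retaining enough control on how $\hat T$-paths pull back to $T$-paths in $G$; adding a high-degree apex vertex, the obvious way to kill all finite cuts, identifies far-apart terminals and wipes out exactly the local connectivity structure that $\mathcal A$ is meant to certify.

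A further, more structural obstacle is that the dual object here is a single global $T$-partition rather than a family of cuts that may be chosen independently near each terminal. Consequently the ``cut $E$ into countable $c$-closed pieces'' reduction of Subsection~\ref{subs: countable} has no analogue --- a $T$-partition does not restrict meaningfully to a piece of the edge set --- so one would have to build $\mathcal A$ coherently over all pieces at once, or bypass the countable reduction altogether.

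Failing the reduction, one could instead try to mirror the architecture of the proof of Theorem~\ref{t:LCh1 inf} directly: isolate a ``one-path-deletion'' claim stating that for every edge $e$ incident with a terminal there is a $T$-path $P$ through $e$ such that $G-E(P)$ still carries a completable path-system/$T$-partition pair, and run a transfinite recursion. The analogue of Proposition~\ref{p: key} --- that deleting two edges cannot destroy feasibility --- would need a Mader-type substitute, presumably built from minimax certificates on finite subgraphs together with a compactness or elementary-submodel argument that patches these finite certificates into one global $T$-partition. Making such a patching respect Condition~\ref{cd: Mader T-pathWeak}, especially the clause that all but at most one edge of $\delta(Y)$ is used for infinitely many components $Y$ simultaneously, is, I believe, the crux --- and plausibly the reason the statement is a conjecture rather than a theorem.
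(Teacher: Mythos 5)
This statement is not a theorem of the paper: it is Conjecture~\ref{conj: complementarity Weak}, posed in the ``Open questions'' section, and the paper offers no proof of it. The authors explicitly treat it as open, note that it does not even seem to imply Conjecture~\ref{conj: LCh}, and only prove the inner Eulerian special case (Theorems~\ref{t:LCh inf} and~\ref{t: LCh1 infIntro}) for countable $T$. So there is no proof in the paper to compare your attempt against.

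Your proposal, for its part, is not a proof either: the first paragraph sketches the standard finite reduction (adjoining a terminal $z$ to kill odd non-terminal degrees and invoking the Lovász--Cherkassky/Erdős--Menger machinery), and the remaining paragraphs correctly explain why each step breaks down in the infinite setting --- even degrees do not prevent odd infinite cuts $d(X)$ for infinite $X\subseteq V\setminus T$, a single global $T$-partition does not localise to the countable $c$-closed pieces of Subsection~\ref{subs: countable}, and no Mader-type analogue of Proposition~\ref{p: key} is available. These diagnoses are accurate and consistent with the paper's own framing, and your closing remark that this is ``plausibly the reason the statement is a conjecture rather than a theorem'' is exactly right. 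But since no step of the plan is carried out, the gap is total: what you have written is an informed explanation of why the statement is open, not an argument that it is true.
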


Although the Lovász-Cherkassky theorem \ref{t:LCh} is a special case of Mader's edge-disjoint $ T $-path theorem 
\ref{t: Mader T-path},  Conjecture \ref{conj: complementarity Weak} does not seem to imply Conjecture \ref{conj: LCh}. 
This (together with the behaviour of $ T $-joins, see \cite[Theorem 2]{Joo17Tjoins}) 
motivates to formulate a stronger conjecture based on the extension of the concept of 
obstructive components.

 For a possibly infinite graph $ G $, we define a component $ Y $ of  $ G-\bigcup 
\mathcal{A} $ to be \emph{obstructive} if after 
the contraction of $ V(G)\setminus Y $ to a vertex $ v $ the resulting graph $ H $ does not contain a set of pairwise 
edge-disjoint 
cycles 
covering $ \delta_H(v) $. This extends our previous definition of obstructive. Indeed, on the one 
hand, if $ 
d_G(Y) $ is odd, then  $ d_{H}(v) 
$ is the same odd number and hence $ \delta_H(v) $ cannot be covered by edge-disjoint cycles. On the 
other hand, if $ d(Y) $ is even, then finding the desired cycles is equivalent to finding a $ J $-join in the connected graph $ G[Y] $ 
where $ J $ consists of those $ u\in Y $  for which there are odd number of edges between $ u $ and $ v $ in $ H $.

\begin{cond}\label{cd: Mader T-path}\
\begin{enumerate}
\item Each $ P\in \mathcal{P} $ uses either  only a single edge from $ E(\mathcal{A}) $ (which must connect two vertex 
sets in $ \mathcal{A} $) or two edges incident with  a component of  $ G-\bigcup \mathcal{A} $.

\item The path-system $ \mathcal{P} $ uses all the edges $ E(\mathcal{A}) $ except one from $ \delta(Y) $ for each 
obstructive component $ 
Y $.
\end{enumerate}
\end{cond}

Note that if $ G $ is inner Eulerian, then there cannot be any obstructive components (regardless of the choice of $ \mathcal{A} 
$) and 
therefore 
 by replacing Condition \ref{cd: Mader T-pathWeak} with Condition \ref{cd: Mader T-path}  in  Conjecture 
 \ref{conj: complementarity Weak} it will imply Conjecture \ref{conj: LCh}.  We also point out that for finite graphs Conditions 
 \ref{cd: Mader T-pathWeak} and \ref{cd: Mader T-path} are 
equivalent because if $ d(Y) $ is even, 
then $ \mathcal{P} $ cannot miss exactly one edge from $ \delta(Y) $.

Recall that a system $ \mathcal{P} $ of edge-disjoint/vertex-disjoint/internally vertex-disjoint $ T $-paths is called 
\emph{strongly maximal} 
if $ \left|\mathcal{Q}\setminus \mathcal{P}\right|\leq \left|\mathcal{P}\setminus \mathcal{Q}\right| $   for every 
edge-disjoint/vertex-disjoint/internally vertex-disjoint  system $ \mathcal{Q} $ of $ 
T $-paths. 

\begin{conj}\label{conj: edgeMaderEquiv}
Let $G$ be a (possibly infinite) graph and let $T\subseteq V(G)$. Then for  a system $\mathcal{P}$ of edge-disjoint 
$T$-paths the following statements are equivalent:
\begin{enumerate}[label=(\roman*)]
\item $ \mathcal{P} $ is a strongly maximal system of edge-disjoint $ T $-paths.
\item  There exists a $ T $-partition $ \mathcal{A} $ satisfying 
Condition \ref{cd: Mader T-pathWeak} with $ \mathcal{P} $.
\item There exists a $ T $-partition $ \mathcal{A} $ satisfying 
Condition \ref{cd: Mader T-path} with $ \mathcal{P} $.
\end{enumerate}
\end{conj}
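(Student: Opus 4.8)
The natural route is the cycle of implications (iii)$\Rightarrow$(ii)$\Rightarrow$(i)$\Rightarrow$(iii). The first implication is purely definitional. Since distinct components of $G-\bigcup\mathcal{A}$ are non-adjacent, every edge of $\delta(Y)$ for a component $Y$ lies in $E(\mathcal{A})$; hence if a $T$-partition $\mathcal{A}$ satisfies Condition \ref{cd: Mader T-path} with $\mathcal{P}$, then $\mathcal{P}$ uses all of $\delta(Y)$ when $Y$ is not obstructive and all but exactly one edge of $\delta(Y)$ when it is. In either case $\mathcal{P}$ omits at most one edge of $\delta(Y)$, which is the second part of Condition \ref{cd: Mader T-pathWeak}; the first part of the two conditions is verbatim the same, so the same $\mathcal{A}$ witnesses Condition \ref{cd: Mader T-pathWeak}.

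For (ii)$\Rightarrow$(i) I would run the ``strong maximality'' version of the counting behind the finite minimax Theorem \ref{t: Mader T-path}. Fix a $T$-partition $\mathcal{A}$ witnessing Condition \ref{cd: Mader T-pathWeak} with $\mathcal{P}$ and let $\mathcal{Q}$ be an arbitrary system of edge-disjoint $T$-paths; the goal is an injection $\iota\colon\mathcal{Q}\setminus\mathcal{P}\to\mathcal{P}\setminus\mathcal{Q}$. Orienting all paths and passing to the subgraph spanned by $(\mathcal{P}\setminus\mathcal{Q})\cup(\mathcal{Q}\setminus\mathcal{P})$, one traces from each $Q\in\mathcal{Q}\setminus\mathcal{P}$ an alternating trail that switches from a $\mathcal{Q}$-path to a $\mathcal{P}$-path (necessarily one in $\mathcal{P}\setminus\mathcal{Q}$, by edge-disjointness within $\mathcal{Q}$) and back whenever it meets an edge shared with the other system. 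The first part of Condition \ref{cd: Mader T-pathWeak} restricts how such a trail may cross the boundary $E(\mathcal{A})$ and the components of $G-\bigcup\mathcal{A}$, and the second part prevents it from being absorbed by more than one unused edge of any $\delta(Y)$; together these force the trail to be finite and to terminate on a member of $\mathcal{P}\setminus\mathcal{Q}$, which we take for $\iota(Q)$. A routine bookkeeping argument makes $\iota$ injective. In particular, the systems produced by Theorems \ref{t:LCh inf} and \ref{t:LCh1 inf} are strongly maximal.

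The implication (i)$\Rightarrow$(iii) is the real content, and I would attack it by lifting Mader's finite structural argument \cite{mader1978maximalzahl} to infinite graphs with the tools of Sections \ref{sec: premi} and \ref{sec: proofLovCHer}. Given a strongly maximal $\mathcal{P}$, one builds the sets $X_t$ from a coherent family of cuts, one per terminal: for $t\in T$, consider the initial segments of the members of $\mathcal{P}$ ending at $t$, directed away from $t$, and seek an Erdős--Menger $t(T-t)$-cut that is ``$\mathcal{P}$-tight'' in the sense of Lemma \ref{l:tight cut}; its $t$-side is the candidate $X_t$, pairwise disjointness being arranged via the lattice structure of Erdős--Menger cuts (Theorem \ref{l: EMlattice}, Lemma \ref{l: large wave}). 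If for some $t$ no such tight cut exists, an augmenting step in the spirit of Claim \ref{cl: 1path make} and Proposition \ref{p: key}, combined with the Augmenting path lemma \ref{l: aug path}, produces a system strictly improving $\mathcal{P}$, contradicting strong maximality. Finally one must check that a leftover component $Y$ is forced to be obstructive exactly when $\mathcal{P}$ omits an edge of $\delta(Y)$; here the even-$d(Y)$ case is not automatic as in the finite setting but requires an infinite Nash--Williams/$T$-join argument inside $G[Y]$ (cf.\ Theorem \ref{t: NW} and \cite{Joo17Tjoins}), which is also where the ``no odd cut'' hypothesis re-enters.

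The step I expect to be the genuine obstacle is (i)$\Rightarrow$(iii). Extracting a global dual partition from a system that is only strongly maximal, rather than of maximum cardinality, is precisely the difficulty that made the infinite forms of König's, Menger's and the Matroid Intersection theorems hard, and here it is compounded by two infinite-graph phenomena: the $T$-join subproblem hidden in the definition of ``obstructive component'', and the need, when morphing one strongly maximal system into another along alternating trails, to control a possibly transfinite iteration and show that its limit is well defined. As with Theorems \ref{t:LCh inf} and \ref{t:LCh1 inf}, I would expect the argument to succeed first for countable $T$, where the machinery of Section \ref{sec: proofLovCHer} is directly available; the general case appears to be at least as hard as the still-open Conjecture \ref{conj: LCh}.
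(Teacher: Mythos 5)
The statement you are proving is labelled as a \emph{conjecture} in the paper, and the paper does not prove it: immediately after stating it the author establishes only $(iii)\Rightarrow(ii)\Rightarrow(i)$ and explicitly records that ``for establishing Conjecture \ref{conj: edgeMaderEquiv} it is sufficient to prove $(i)\Rightarrow(iii)$'', which is left open. Your proposal is in the same position: you prove $(iii)\Rightarrow(ii)$ correctly (and by essentially the same observation that every edge of $\delta(Y)$ lies in $E(\mathcal{A})$), you sketch $(ii)\Rightarrow(i)$, and for $(i)\Rightarrow(iii)$ you offer only a programme and concede it is the genuine obstacle. So as a proof of the full equivalence the proposal has a real gap at $(i)\Rightarrow(iii)$ --- but that gap coincides exactly with what is open in the paper, and you have correctly located where the difficulty sits (including the hidden $T$-join/obstructive-component issue and the transfinite-limit problem). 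No deduction of the conjecture should be claimed.

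For the implication $(ii)\Rightarrow(i)$ your route also diverges from the paper's and is the weaker of the two. You propose an injection $\mathcal{Q}\setminus\mathcal{P}\to\mathcal{P}\setminus\mathcal{Q}$ via alternating trails and assert that Condition \ref{cd: Mader T-pathWeak} ``forces the trail to be finite'' and that injectivity is ``routine bookkeeping''; neither claim is justified, and alternating-trail arguments are precisely the kind that break in infinite graphs (a trail may fail to terminate, and a single $P\in\mathcal{P}\setminus\mathcal{Q}$ may be hit by many members of $\mathcal{Q}\setminus\mathcal{P}$, so the naive assignment is not injective). The paper avoids an injection altogether by a cardinality split: if $\left|\mathcal{P}\setminus\mathcal{Q}\right|=\kappa\geq\aleph_0$ then $\left|E(\mathcal{P}\setminus\mathcal{Q})\right|=\kappa$ and inclusion-wise maximality forces every $Q\in\mathcal{Q}\setminus\mathcal{P}$ to use one of these $\kappa$ edges, giving $\left|\mathcal{Q}\setminus\mathcal{P}\right|\leq\kappa$ directly; if $\left|\mathcal{P}\setminus\mathcal{Q}\right|=k$ is finite one passes to $G-E(\mathcal{P}\cap\mathcal{Q})$, where the same $T$-partition makes Mader's minimax value equal to $k$ and hence bounds $\left|\mathcal{Q}\setminus\mathcal{P}\right|$ by $k$. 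You should replace the trail argument by this counting argument, or else supply genuine proofs of termination and injectivity.
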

Notice  that  $ (iii)\Longrightarrow (ii) \Longrightarrow (i) $.   Indeed, the implication $ (iii)\Longrightarrow 
(ii)$ 
is trivial. Assuming  $ (ii)$,  $ \mathcal{P} $ must 
be an inclusion-wise maximal system of edge-disjoint $ T $-paths. If $ \left| \mathcal{P}\setminus \mathcal{Q}\right|=\kappa 
\geq \aleph_0 $, then $ \left| E(\mathcal{P}\setminus \mathcal{Q})\right|=\kappa $ and since each $ P\in \mathcal{Q}\setminus 
\mathcal{P} $ must contain an edge from $ E(\mathcal{P}\setminus \mathcal{Q}) $, we obtain $ \left|\mathcal{Q}\setminus 
\mathcal{P}\right|\leq \kappa $. If  $ \left| \mathcal{P}\setminus \mathcal{Q}\right|= k\in \mathbb{N} $, then let $ 
G':=G-E(\mathcal{P}\cap \mathcal{Q}) $. Then $ d_{G'}(Y) $ is finite for every component of $ G'-\bigcup \mathcal{A} $ and 
for all of but finitely many $ Y $ it is $ 0 $, moreover,

\[ \frac{1}{2}\left( \sum_{t\in T}d_{G'}(X_t)-\mathsf{o}(G', \mathcal{A})\right)=k, \]

from which $ \left|\mathcal{Q}\setminus \mathcal{P}\right|\leq k $ follows. Thus $ \mathcal{P} $ is strongly maximal. Hence
for establishing Conjecture \ref{conj: edgeMaderEquiv} it is sufficient to prove $ (i)\Longrightarrow (iii) $.

\subsection{Vertex-disjoint T-paths}
 If $ 
 T=V(G) $, then a vertex-disjoint 
 system of $ 
T $-paths is a matching. Infinite matching theory was intensively investigated and is well-understood (see the survey 
\cite{aharoni1991infinite}). The existence of a strongly maximal matching first in countable and then in arbitrary graphs 
was proven by Aharoni (see 
\cite{aharoni1984generalization, aharoni1988matchings}) together with the following theorem:

\begin{thm}[Aharoni,  {\cite[Theorem 5.2]{aharoni1991infinite}}]\label{t: stmMatching}
In every (possibly infinite)  graph $ G=(V,E) $ there is a matching $ M\subseteq E $ such that there is an $ X\subseteq V $ 
with the following properties:
\begin{enumerate}
\item  For each component $ Y $ of $ G-X $, the edges in $ M $ spanned by $ Y $  cover all but at most one 
 vertex of $ Y $. 
\item The vertices in $ X $ are covered by $ M $ in such a way that $ X $ does not span any edge in $ M $.
\item $ G[Y] $ is factor-critical\footnote{A graph is factor-critical if it does not admit a perfect matching but after deleting any 
vertex the resulting graph does.} 
whenever 
$ Y $ is a component of $ G-X $ for which $ M $ does not contain a perfect matching of $ G[Y] $.  
\item Let $ \Pi(G,X) $ be the bipartite graph  whose vertex classes are $ X $ and the set $ \mathcal{Y} $ of the factor-critical 
components of $ 
G-X $, furthermore,  $ xY $ is an edge if $ x $ has a neighbour in $ Y $ in $ G $. Then  for every $ Y\in \mathcal{Y} $ there is 
a 
matching in $ 
\Pi(G,X) $  
covering $ X $ while avoiding vertex $ Y $.
\end{enumerate}
\end{thm}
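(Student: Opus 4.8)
Since Theorem \ref{t: stmMatching} is Aharoni's, the plan is to reconstruct the strategy by which the finite Gallai-Edmonds structure theorem is lifted to graphs of arbitrary size; the earlier reductions of this paper do not help directly, since the elementary-submodel trick of Subsection \ref{subs: countable} would require the factor-critical components of $G-X$ to be countable, which they need not be, so I would work by transfinite recursion instead. The skeleton is: produce a strongly maximal matching $M$ and let $X$ play the role of the Gallai-Edmonds barrier, with the components of $G-X$ on which $M$ is not perfect forming the factor-critical part. I would first isolate the bipartite case, in which every component of $G-X$ is a single vertex: there the assertion is essentially Aharoni's infinite König theorem, which yields a matching $M$ together with a cover $X$ that is orthogonal to $M$, i.e.\ meets each edge of $M$ in exactly one vertex and is disjoint from every $M$-unsaturated vertex, and property (4) then degenerates to the surplus form of Hall's condition, which that theorem provides. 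The task is to reduce the general case to this one.

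The reduction I would carry out is the standard blossom contraction, performed transfinitely. Fix a well-ordering of $V$ and build recursively a matching $M$, a set $X$, and a family $\mathcal{Y}$ of vertex sets, maintaining the invariant that each $Y\in\mathcal{Y}$ spans a factor-critical subgraph that $M$ matches near-perfectly, that $M$ saturates $X$ while $X$ spans no edge of $M$, and that after contracting each $Y\in\mathcal{Y}$ to a single vertex the induced data witness the König condition on the auxiliary bipartite graph $\Pi(G,X)$ whose classes are $X$ and $\mathcal{Y}$. At a successor step, examining the next vertex either allows an augmentation of $M$ along a finite $M$-alternating path, or produces an odd alternating walk that closes into a blossom, which gets absorbed into (or spawns) a member of $\mathcal{Y}$, or exposes a König-type deficiency, whereupon the offending vertex is moved into $X$; at limit steps one takes unions and must check that the invariant survives. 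When the recursion halts, properties (1)--(3) fall out of the invariant and property (4) is read off from the König cover supplied for $\Pi(G,X)$.

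The main obstacle is twofold. The first, infinite-combinatorial but essentially local, difficulty is that the members of $\mathcal{Y}$ may be infinite: one needs that an infinite factor-critical graph is still vertex-matchable, meaning that for each of its vertices there is a matching covering all the others, and that contracting infinitely many infinite blossoms at once leaves a graph to which the bipartite theorem still applies. The second, and the genuine heart of the proof, is the limit step: strong maximality cannot be reached by iterating finite augmentations, since an $\omega$-sequence of augmentations need not converge to a matching, so the recursion must carry a ``no-hindrance'' invariant in the spirit of the Aharoni--Nash-Williams--Shelah analysis of the infinite marriage problem, and the persistence of that invariant through limit ordinals is precisely the phenomenon that makes the infinite König and Hall theorems hard. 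Thus essentially all the weight of the argument sits on the infinite bipartite theory and its transfer across blossom contractions, which is why Aharoni's proof is markedly heavier than the finite Gallai-Edmonds argument.
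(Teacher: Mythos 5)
This statement is not proved in the paper at all: it is quoted verbatim as Aharoni's Theorem 5.2 from the survey \cite{aharoni1991infinite}, and the remark following it explicitly says that property (4) is not even part of Aharoni's original statement but is an addition obtained from \cite[Lemma 3.6]{aharoni1983general}. So there is no in-paper argument to compare against; the only question is whether your text constitutes a proof on its own. It does not. What you have written is a strategy outline in which every genuinely hard step is named rather than executed. You yourself identify the crux --- ``the persistence of that invariant through limit ordinals is precisely the phenomenon that makes the infinite König and Hall theorems hard'' --- and then stop there. Saying that the recursion ``must carry a no-hindrance invariant in the spirit of the Aharoni--Nash-Williams--Shelah analysis'' is a pointer to where a proof would live, not a proof; the entire content of Aharoni's theorem is the verification that such an invariant can be formulated and survives limits, and none of that is supplied.

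Beyond the global incompleteness, two concrete steps would fail as stated. First, your base case conflates the Gallai--Edmonds set $X$ with a König vertex cover orthogonal to $M$: even in the bipartite specialisation, property (4) asks for a matching of $X$ into $\mathcal{Y}$ in $\Pi(G,X)$ that misses an arbitrarily prescribed $Y\in\mathcal{Y}$, which is a surplus/espousability statement about the \emph{other} side of the bipartition and does not follow from the orthogonal-cover form of the infinite König theorem; this is exactly why the paper invokes \cite[Lemma 3.6]{aharoni1983general} separately for (4). Second, property (3) requires the deficient components of $G-X$ to be factor-critical as \emph{infinite} graphs, i.e.\ for every vertex $v$ of such a component $Y$ there must exist a perfect matching of $G[Y]-v$; a transfinite blossom-contraction process, as you describe it, only certifies that finite odd structures encountered along the way are near-matchable, and you give no argument that the (possibly infinite) union of absorbed blossoms retains vertex-matchability --- this is again a limit-stage issue, not a local one. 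Until the hindrance analysis at limits and these two points are actually carried out, the proposal is a plausible reconstruction of the shape of Aharoni's argument but not a proof of the theorem.
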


\begin{rem}
\begin{itemize}\
\item Properties (1) and (2) in  Theorem  \ref{t: stmMatching} are already sufficient to ensure the strong maximality of the 
matching $ M $.
\item For every strongly maximal matching $ M $ there is an $ X $ satisfying (1)-(4).
\item Property (4)  was originally not mentioned by Aharoni but it can be obtained easily by 
applying for example  \cite[Lemma 3.6]{aharoni1983general}.
\item If there is a matching $ M $ for which $ V(M) $ is $ \subseteq $-maximal 
(which is always the case in countable graphs), then the set $ X $ in Theorem  \ref{t: stmMatching} is unique.
\end{itemize} 
\end{rem}

By omitting the assumption of $ T=V(G) $ we leave  matching theory and formulate an infinite generalisation of Gallai's 
theorem \cite{gallai1964maximum}:
\begin{conj}\label{conj: Gallai vertex-disj}
Let $G=(V, E)$ be a (possibly infinite)  graph and let $T\subseteq V$. Then there exists a system 
$\mathcal{P}$ of vertex-disjoint $T$-paths such that there is an $ X\subseteq V $ with the following properties:
\begin{enumerate}
\item For each component $ Y $ of $ G-X $, the paths $\{ P\in \mathcal{P}: V(P)\subseteq Y\} $  cover all but at most one 
 vertex of $ 
T\cap Y $.
\item $ X\subseteq V(\mathcal{P}) $ where $ \left|V(P)\cap X\right|\leq1$ for every $ P\in \mathcal{P} $.
\end{enumerate}
\end{conj}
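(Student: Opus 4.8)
The plan is to reduce Conjecture~\ref{conj: Gallai vertex-disj} to Aharoni's infinite Gallai--Edmonds structure theorem (Theorem~\ref{t: stmMatching}) via a gadget reduction of the vertex-disjoint $T$-path problem to a matching problem. Observe first that the case $T=V(G)$ of the conjecture is precisely Theorem~\ref{t: stmMatching} restricted to its properties (1) and (2): a $V(G)$-path has no internal vertices, so it is an edge, $\mathcal{P}$ is a matching, and the two conditions of the conjecture translate verbatim into conditions (1) and (2) there --- and by the remark following that theorem, (1) and (2) alone already witness strong maximality. So the work is to deal with non-terminal vertices. Gallai's theorem~\cite{gallai1964maximum} does exactly this in the finite case by a gadget reduction: roughly, each $v\in V(G)\setminus T$ is replaced by an edge $v^{0}v^{1}$, edges inside $T$ are kept, and the gadgets are wired to one another and to $T$ so that a $T$-path through internal vertices $v_{1},\dots,v_{k}$ corresponds to an alternating run of matching edges saturating each gadget $\{v^{0},v^{1}\}$ exactly. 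I would run the same construction for arbitrary $G$, producing an auxiliary graph $H=H_{G,T}$ of the same cardinality together with the projection $\pi\colon V(H)\to V(G)$ collapsing each gadget and fixing $T$.

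Second, I would apply Theorem~\ref{t: stmMatching} to $H$ to obtain a strongly maximal matching $M$ and a witnessing set $X_{H}\subseteq V(H)$. Pushing $M$ down through $\pi$ yields a system $\mathcal{P}$ of vertex-disjoint $T$-paths in $G$: the alternating structure inside each gadget forces the images to be genuine paths, matching edges inside $T$ become $T$-paths of length one, and $M$-components living entirely among non-terminal gadgets (which would project to cycles, not $T$-paths) are discarded. For the witness I would take a suitable $\pi$-projection $X$ of $X_{H}$. Granting the correspondence, condition~(2) of the conjecture falls out of condition~(2) of Theorem~\ref{t: stmMatching} --- no $M$-edge meets $X_{H}$ twice, hence no path of $\mathcal{P}$ meets $X$ twice --- and condition~(1) falls out of condition~(1) there, since a vertex of $T\cap Y$ left uncovered by $\mathcal{P}$ in a component $Y$ of $G-X$ corresponds to a vertex left uncovered by $M$ in the associated component of $H-X_{H}$.

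The hard part will be making this structural translation rigorous rather than merely matching optimal values. First, the components of $H-X_{H}$ must \emph{project onto}, not shatter, the components of $G-X$: a gadget straddling the boundary of $X_{H}$ could split a component of $G-X$ in an unwanted way, so $X$ must be defined from $X_{H}$ so as to preclude this, possibly after re-choosing $X_{H}$ using the flexibility Theorem~\ref{t: stmMatching} allows. Second, strong maximality must genuinely be transported: a system $\mathcal{Q}$ of vertex-disjoint $T$-paths with $\left|\mathcal{Q}\setminus\mathcal{P}\right|>\left|\mathcal{P}\setminus\mathcal{Q}\right|$ should yield a matching of $H$ beating $M$ in the corresponding sense, and since for infinite systems this is not a statement about total size, the cleanest route is to derive it from the local structure witnessed by $X$, in the spirit of the argument given after Conjecture~\ref{conj: edgeMaderEquiv}. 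Third, in uncountable graphs a strongly maximal matching (hence $X_{H}$) need not be unique, so $X$ will not be canonical; this is harmless for an existence statement but rules out a clean ``formula'' for $X$. If the direct reduction turns out to be too brittle, a fallback is to settle the countable case first --- where $X_{H}$ is unique by the remark after Theorem~\ref{t: stmMatching} --- and then attempt a transfinite exhaustion by countable subgraphs as in Section~\ref{sec: proofLovCHer}; but vertex-disjointness does not localize the way the edge decomposition used for Theorem~\ref{t:LCh1 inf} does, so I expect the gluing step of such an argument to be where the real difficulty lies.
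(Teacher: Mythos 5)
The statement you are proving is Conjecture~\ref{conj: Gallai vertex-disj}: the paper offers no proof of it and explicitly lists it among the open problems of Section~\ref{sec: oultook}. So there is no argument of the author's to compare yours against; the only question is whether your proposal closes the problem, and it does not. It is a research plan whose decisive steps are exactly the ones you flag as ``the hard part,'' and those are not routine verifications but the actual content of the conjecture. Gallai's gadget reduction is, in the finite case, a \emph{cardinality} argument: the maximum number of vertex-disjoint $T$-paths is read off from the deficiency of the auxiliary graph $H$, and the bookkeeping that makes half-saturated gadgets, matching components projecting to cycles through $V\setminus T$, and dangling path-ends all come out right is done by counting. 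In the infinite setting that bookkeeping is unavailable, and what you would need is a structural correspondence that Theorem~\ref{t: stmMatching} does not hand you. Concretely: (i) the projection of a strongly maximal matching $M$ of $H$ need not be a system of $T$-paths at all --- a gadget $\{v^{0},v^{1}\}$ met by $M$ in only one external edge projects to a path ending at a non-terminal, and an $M$-component can project to a two-way infinite alternating path or to a cycle avoiding $T$; truncating or discarding these pieces can strand vertices of $X$ outside $V(\mathcal{P})$, destroying condition~(2). (ii) The witness $X_{H}$ may split gadgets (contain $v^{0}$ but not $v^{1}$), so no canonical projection $X$ induces the component structure on $G-X$ that condition~(1) refers to; you say $X_{H}$ should be ``re-chosen using the flexibility Theorem~\ref{t: stmMatching} allows,'' but that theorem is an existence statement and provides no such flexibility. (iii) Your fallback of doing the countable case and then a transfinite exhaustion is the strategy of Section~\ref{sec: proofLovCHer}, but as you yourself note, the decomposition there rests on partitioning the \emph{edge set} into $c$-closed pieces, and vertex-disjointness does not localise that way; no substitute is offered.

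None of this means the approach is wrong --- reducing to Aharoni's infinite Gallai--Edmonds structure is a natural first attempt, and your observation that the case $T=V$ is exactly properties (1) and (2) of Theorem~\ref{t: stmMatching} is correct and is presumably part of why the conjecture is stated in this form. But as written the proposal establishes nothing beyond that special case, and each of the three gaps above is a genuine open difficulty rather than a detail to be filled in.
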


 A minimax formula for the maximal number of  internally vertex-disjoint $T$-paths was given by Mader in 
\cite{mader1978maximalzahlH}.  We expect the following generalisation based on the complementary slackness conditions 
to be true:

\begin{conj}\label{conj: Mader internally-disj}
Let $G=(V, E)$ be a (possibly infinite)  graph and let $T\subseteq V$. Then there 
exists a 
system 
$\mathcal{P}$ of internally vertex-disjoint $T$-paths such that there is an $ X\subseteq V\setminus T $ and a partition $ 
\mathcal{Y} $ of 
$ V\setminus (T\cup X) $ with the following properties:

\begin{enumerate}
\item[(0)] After the deletion of the vertex set $ X $ and the edges of the subgraphs $ G[Y] $ for $ Y\in \mathcal{Y} $   the 
resulting graph does not contain any $ T $-path. 
\end{enumerate}   \[ \text{Let }B_Y:= \{ v\in Y: v \text{ has a 
neighbour in 
} V\setminus (X\cup Y) 
\}\text{ for }Y\in \mathcal{Y}. \] 
\begin{enumerate}
\item The paths in $ \mathcal{P} $ cover $ X $ and all but at most one vertex of $ B_Y $ for every $ Y\in \mathcal{Y} $.
\item For every $ P\in \mathcal{P} $ either $ \left|V(P)\cap X\right|=1 $ and $ \left|V(P)\cap B_Y\right|\leq 1$ for every $ 
Y\in 
\mathcal{Y} $ or $ \left|V(P)\cap X\right|=0 $ and there is a unique $ Y_P\in \mathcal{Y} $ with $ \left|V(P)\cap 
B_{Y_P}\right|=2 $ 
while $ \left|V(P)\cap B_Y\right|\leq 1$ for $ Y\in \mathcal{Y}\setminus \{ Y_P \} $.
\item For every $ Y\in \mathcal{Y} $ there is at most one $ P\in \mathcal{P} $ with $ \left|V(P)\cap B_Y\right|=1 $.
\end{enumerate}
\end{conj}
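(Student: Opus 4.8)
The plan is to follow the two-step template that worked for the Lov\'asz--Cherkassky theorem and for Aharoni's infinite matching results (Theorem~\ref{t: stmMatching}): first secure a \emph{strongly maximal} system $\mathcal{P}$ of internally vertex-disjoint $T$-paths, and then read the pair $(X,\mathcal{Y})$ off from its structure. So the first task is to establish the internally-vertex-disjoint case of Conjecture~\ref{conj: strongly maxim}. For countable $G$ I would construct $\mathcal{P}$ by a transfinite process analogous to building a strongly maximal matching: maintain a system admitting no augmenting configuration for the finitely many ``deficiencies'' considered so far, repair an emerging deficiency along an alternating internally-disjoint structure, and take a limit that is well defined because the trace of the system on each individual vertex eventually stabilises. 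For uncountable $G$ I would run an elementary submodel tower: write $G=\bigcup_{\alpha<\kappa}G_\alpha$ along a continuous chain of elementary submodels of a large enough $H_\theta$, glue strongly maximal systems on the increments, and verify by reflection that the union is strongly maximal in $G$.

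Once such a $\mathcal{P}$ is fixed, the second task is to produce the dual object, mirroring the finite proof of Mader's internally-disjoint $T$-path theorem~\cite{mader1978maximalzahlH}. I would let $X\subseteq V\setminus T$ consist of the non-terminal vertices that are ``essential'' in the sense of lying on every suitably maximal substructure, and let $\mathcal{Y}$ be the partition of $V\setminus(T\cup X)$ into the ``factor-critical-like'' blocks produced by a Gallai--Edmonds/blossom-type analysis of $G-X$; property~(0) is then forced, since a $T$-path surviving the deletion of $X$ and of all the subgraphs $G[Y]$ would yield an augmentation of $\mathcal{P}$. For conditions~(1)--(3) I would translate ``no augmenting internally-disjoint path or flower'' into statements about how $\mathcal{P}$ meets the boundary sets $B_Y$; the factor-critical behaviour inside a block --- which is what makes the ``all but at most one vertex of $B_Y$'' bound in~(1) and the uniqueness in~(3) hold --- should come from a local reduction of $G[Y]$ together with $B_Y$ to a matching problem, followed by an appeal to Theorem~\ref{t: stmMatching} with $B_Y$ in the role of the deficiency set.

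I expect the main obstacle to be this second step in the infinite setting. In finite graphs conditions~(1)--(3) drop out of the max--min duality of Mader's theorem, but here cardinality carries no information, so the counting must be replaced by a genuinely structural ``no augmenting configuration'' argument, and with infinitely many blocks $Y$ whose boundaries $B_Y$ may themselves be infinite the bookkeeping becomes delicate --- this is exactly the situation, flagged in the Introduction, where deleting a single vertex or edge fails to simplify the problem. A secondary obstacle is already the first step: unlike for matchings, a system of internally-disjoint $T$-paths need not possess a $\subseteq$-maximal trace on the vertex set, so even the existence of a strongly maximal system is non-obvious and may require porting the full strength of Aharoni's alternating-tree machinery to this context. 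A natural intermediate milestone would be to settle the edge-disjoint analogue (Conjecture~\ref{conj: edgeMaderEquiv}) first and then attempt a gadget reduction; but since the usual undirected edge/vertex reductions behave badly for $T$-path packings, it is likely that the internally-disjoint case has to be treated on its own terms.
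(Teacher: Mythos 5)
The statement you are addressing is Conjecture \ref{conj: Mader internally-disj}: the paper leaves it open and offers no proof, so there is nothing to compare your argument to --- and, more importantly, what you have written is a research programme rather than a proof. Both of its steps defer the entire difficulty to claims that are themselves open. Step one is exactly the internally-vertex-disjoint case of Conjecture \ref{conj: strongly maxim}. Your justification for the countable case --- ``take a limit that is well defined because the trace of the system on each individual vertex eventually stabilises'' --- asserts precisely the convergence that the paper's introduction identifies as the central failure mode in infinite combinatorics: an infinite sequence of augmentations need not have a well-defined limit, and nothing in your sketch forces the trace at a vertex to stabilise, since a vertex may be used and abandoned infinitely often by successive repairs. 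The elementary-submodel step for uncountable $G$ likewise presupposes that strong maximality reflects to and from the submodels of the tower, which is unproven and is exactly where the analogous matching theorem required the full strength of Aharoni's machinery.

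Step two contains a concrete error in addition to being unexecuted. You claim property (0) ``is forced, since a $T$-path surviving the deletion of $X$ and of all the subgraphs $G[Y]$ would yield an augmentation of $\mathcal{P}$.'' First, such a surviving path need not be internally disjoint from the paths of $\mathcal{P}$, so it yields no augmentation. Second, even a genuine augmenting configuration would not contradict strong maximality in an infinite graph: strong maximality only compares the cardinalities $\left|\mathcal{Q}\setminus\mathcal{P}\right|$ and $\left|\mathcal{P}\setminus\mathcal{Q}\right|$ and is entirely compatible with $\mathcal{P}$ admitting local improvements --- which is the very reason the paper replaces quantitative optimality by a structural dual object. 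What you would actually need here is the analogue of the implication $(i)\Longrightarrow(iii)$ of Conjecture \ref{conj: edgeMaderEquiv}, which the paper explicitly singles out as the open direction even in the edge-disjoint setting. Finally, note that Conjecture \ref{conj: Mader internally-disj} does not ask for $\mathcal{P}$ to be strongly maximal; routing the construction through strong maximality is a reasonable strategy (it is the one the paper's closing conjecture suggests), but it strictly enlarges what must be proved rather than proving anything as it stands.
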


\begin{conj}
The systems of $ T $-paths described in Conjecture \ref{conj: Gallai vertex-disj} (Conjecture \ref{conj: Mader 
internally-disj}) are exactly the strongly maximal systems of  vertex-disjoint (internally vertex-disjoint)  $ T $-paths.
\end{conj}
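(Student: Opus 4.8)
The plan is to split the claimed equivalence into its two implications, for each of the two notions of disjointness. For the direction ``$\mathcal{P}$ is a witnessed system $\Rightarrow$ $\mathcal{P}$ is strongly maximal'', I would repeat the bookkeeping the paper already carries out for Conjecture~\ref{conj: edgeMaderEquiv}. Let $\mathcal{P}$ satisfy the conditions of Conjecture~\ref{conj: Gallai vertex-disj} (resp.\ Conjecture~\ref{conj: Mader internally-disj}) with witness $X$ (resp.\ $X$ together with $\mathcal{Y}$), and let $\mathcal{Q}$ be an arbitrary competitor. One checks directly that the conditions force $\mathcal{P}$ to be inclusion-wise maximal, so every $Q\in\mathcal{Q}\setminus\mathcal{P}$ meets a vertex — an internal vertex, in the internally disjoint case — used by some path of $\mathcal{P}\setminus\mathcal{Q}$. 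If $\left|\mathcal{P}\setminus\mathcal{Q}\right|\geq\aleph_0$, then finiteness of the individual paths together with the pairwise disjointness of $\mathcal{Q}\setminus\mathcal{P}$ already gives $\left|\mathcal{Q}\setminus\mathcal{P}\right|\leq\left|\mathcal{P}\setminus\mathcal{Q}\right|$. If $\left|\mathcal{P}\setminus\mathcal{Q}\right|=k\in\mathbb{N}$, delete the vertices occupied by the paths in $\mathcal{P}\cap\mathcal{Q}$ (only their internal vertices, in the internally disjoint case); the witness restricts to the resulting graph, every component of it minus $X$ that is disjoint from $\mathcal{P}\setminus\mathcal{Q}$ contains at most one terminal and hence carries no $T$-path, and so the finite minimax theorem of Gallai~\cite{gallai1964maximum} (resp.\ Mader~\cite{mader1978maximalzahlH}) applied to the essentially finite remainder yields $\left|\mathcal{Q}\setminus\mathcal{P}\right|\leq k$. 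This half should be routine.

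For the converse in the \emph{vertex-disjoint} case I would reduce to Aharoni's strongly maximal matching theorem, Theorem~\ref{t: stmMatching}. Using a reduction of vertex-disjoint $T$-path problems to matching problems in an auxiliary graph $\widehat{G}$ (built, as in Gallai's finite argument, from a pendant gadget at each terminal together with a splitting of every non-terminal vertex), one arranges that the vertex-disjoint $T$-path systems of $G$ correspond to matchings of $\widehat{G}$. The key point to verify is that this correspondence respects symmetric differences tightly enough that a strongly maximal $T$-path system maps to a strongly maximal matching of $\widehat{G}$. Applying Theorem~\ref{t: stmMatching} to $\widehat{G}$ yields a set $\widehat{X}$ and the attached Gallai--Edmonds-type decomposition; pulling $\widehat{X}$ and the factor-critical components back along the reduction should produce the set $X$ of Conjecture~\ref{conj: Gallai vertex-disj}, with conditions (1)--(4) of Theorem~\ref{t: stmMatching} translating into its conditions.

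The hard part will be the \emph{internally vertex-disjoint} case of Conjecture~\ref{conj: Mader internally-disj}: Mader's finite theorem there is markedly more intricate, and the blocking structure $(X,\mathcal{Y})$ with its boundary sets $B_Y$ has no transparent matching analogue, so I do not expect a black-box reduction to work. Instead I would mimic Section~\ref{sec: proofLovCHer}: prove an analogue of Claim~\ref{cl: 1path make} — through any prescribed edge incident with a terminal there runs an internally-vertex-disjoint $T$-path whose deletion preserves the relevant blocking condition — then build the system recursively in the countable case and pass to arbitrary cardinality via the elementary-submodel chopping of Section~\ref{subs: countable}. The Menger-type input (the internally-vertex-disjoint analogue of Theorem~\ref{t: Inf Menger}) is available, but the combinatorial core — an analogue of Proposition~\ref{p: key} describing the configuration left after deleting a bounded number of vertices — has to be built from scratch, and coordinating the ``all but at most one vertex of $B_Y$'' requirement across infinitely many blocking components while excluding the ``escape to infinity'' behaviour exhibited in the Introduction is where the genuine difficulty lies. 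These four implications together give the conjecture.
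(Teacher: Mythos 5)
This statement is an open conjecture in the paper; the paper gives no proof of it, so your proposal has to stand on its own, and as written it does not: it is a research plan with the decisive steps explicitly deferred. Two gaps are structural rather than technical. First, for the implication ``strongly maximal $\Rightarrow$ witnessed'' in the internally vertex-disjoint case you propose to mimic Section \ref{sec: proofLovCHer}: prove an analogue of Claim \ref{cl: 1path make} and build a system recursively. But that strategy constructs \emph{some} system admitting the blocking structure $(X,\mathcal{Y})$; the conjecture asserts that \emph{every} strongly maximal system admits one. You would additionally need either an argument that the dual witness can be read off from an arbitrary given strongly maximal $\mathcal{P}$, or that all strongly maximal systems share a common witness; nothing in your plan addresses this. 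You yourself flag that the analogue of Proposition \ref{p: key} ``has to be built from scratch'' --- that is precisely the content of the conjecture, not a detail to be filled in later. Second, in the vertex-disjoint case the entire converse rests on the unverified claim that the Gallai-type gadget reduction ``respects symmetric differences tightly enough'' for strong maximality of a $T$-path system to transfer to strong maximality of the corresponding matching in $\widehat{G}$. This is exactly where infinite arguments tend to break: $\widehat{G}$ has many matchings not arising from path systems, so a competitor matching refuting strong maximality in $\widehat{G}$ need not pull back to a competitor path system, nor conversely. Without that lemma the appeal to Theorem \ref{t: stmMatching} does not get off the ground.

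The forward direction (witnessed $\Rightarrow$ strongly maximal) is the most plausible part, but even there the finite count is incomplete: after deleting the vertices occupied by $\mathcal{P}\cap\mathcal{Q}$ the set $X$ and the components of $G-X$ change, and you must verify that the Gallai (resp.\ Mader) minimax value of the finite remainder equals $k=\left|\mathcal{P}\setminus\mathcal{Q}\right|$, i.e.\ that the restriction of $\mathcal{P}\setminus\mathcal{Q}$ still attains the complementary-slackness conditions there. The paper carries out the analogous verification explicitly for the edge-disjoint case; here it is only asserted. In short, the proposal identifies sensible lines of attack but completes none of the four implications.
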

\printbibliography
\end{document}